\def\url@leostyle{%
  \@ifundefined{selectfont}{\def\UrlFont{\sf}}{\def\UrlFont{\small\ttfamily}}}
\theoremstyle{plain}
\newtheorem{thm}{Theorem}[section]
\newtheorem{lem}[thm]{Lemma}
\newtheorem{cor}[thm]{Corollary}
\newtheorem{prop}[thm]{Proposition}
\theoremstyle{definition}
\newtheorem{rmq}[thm]{Remark}
\newtheorem{exm}[thm]{Example}
\font\tencyr=wncyr10 \def\russe{\tencyr\cyracc} 
\def\Sha{\text{\russe{Sh}}}
\DeclareMathOperator{\Spec}{Spec}
\DeclareMathOperator{\pic}{Pic}
\DeclareMathOperator{\Sel}{Sel}
\DeclareMathOperator{\Cl}{Cl}
\DeclareMathOperator{\homr}{Hom}
\DeclareMathOperator{\coker}{coker}
\DeclareMathOperator{\diviseur}{div}
\DeclareMathOperator{\Divp}{DivPrinc}
\DeclareMathOperator{\Div}{Div}
\DeclareMathOperator{\logpic}{LogPic}
\DeclareMathOperator{\img}{Im}
\DeclareMathOperator{\Br}{Br}
\DeclareMathOperator{\Gal}{Gal}
\newcommand{\Selphi}{\Sel^{\varphi}(E/K)}
\newcommand{\Selphihat}{\Sel^{\hat\varphi}(E'/K)}
\newcommand{\A}{\mathcal{A}}
\newcommand{\E}{\mathcal{E}}
\newcommand{\OK}{\mathcal{O}_K}
\newcommand{\FF}{\mathbb{F}}
\newcommand{\ZZ}{\mathbb{Z}}
\newcommand{\QQ}{\mathbb{Q}}
\newcommand{\Zp}{{}^{\ZZ}\!/\!{}_{p\ZZ}}
\newcommand{\QZ}{{}^{\QQ}\!/\!{}_{\ZZ}}
\newcommand{\cyclic}[1]{{}^{\ZZ}\!/\!{}_{#1 \ZZ}}
\renewcommand{\leq}{\leqslant}
\renewcommand{\geq}{\geqslant}
\begin{document}

\title{The class group pairing and
$p$-descent on elliptic curves}

\author{Jean Gillibert \and Christian Wuthrich\footnote{The second author was supported by the EPSRC grant EP/G022003/1}}

\date{October 2011}

\maketitle

\begin{abstract}
We give explicit formulae for the logarithmic class group pairing on an elliptic curve defined over a number field. Then we relate it to the descent relative to a suitable cyclic isogeny. This allows us to connect the resulting Selmer group with the logarithmic class group of the base. These constructions are explicit and suitable for computer experimentation. From a conceptual point of view, the questions that arise here are analogues of ``visibility'' questions in the sense of Cremona and Mazur.
\end{abstract}


\section{Introduction}
The class-invariant homomorphism has been introduced by Martin Taylor~\cite{t88} in order to study the Galois module structure of extensions obtained when dividing points by isogenies between abelian varieties. For instance, let $p$ be an odd prime and let $\varphi\colon E \to E'$ be an isogeny of degree $p$ between elliptic curves over a number field $K$. Suppose that the kernel of $\varphi$ is isomorphic to $\mu_p$ as a Galois module. For $Q\in E'(K)$ consider the extension $K(\varphi^{-1}(Q))/K$ obtained by adjoining the coordinates of the preimages of $Q$ under $\varphi$. It is a Kummer extension, and we may ask for the arithmetic properties of this extension. It is easy to prove that this extension defines a $\mu_p$-torsor $\varphi^{-1}(Q)$ over the ring $\mathcal{O}_{K,S}$ of $S$-integers of $K$ where $S$ is the set of all bad places for $E$ and of all places above $p$. We have an exact sequence
\begin{equation*}
 \xymatrix@1{0  \ar[r] & \mathcal{O}_{K,S}^{\times}/p \ar[r] & H_{\rm fl}^1(\mathcal{O}_{K,S},\mu_p) \ar[r] & \Cl(\mathcal{O}_{K,S})[p] \ar[r] & 0 }
\end{equation*}
where the right-hand side map can be interpreted as measuring the Galois module structure of $\mu_p$-torsors. The class-invariant homomorphism $\psi^{\rm cl}:E'(K)/\varphi(E(K))\rightarrow \Cl(\mathcal{O}_{K,S})[p]$ is obtained by sending a point $Q\in E'(K)$ to the Galois module structure of the torsor $\varphi^{-1}(Q)$. It has been conjectured by Taylor, and proved by Taylor et al. that torsion points belong to the kernel of $\psi^{\rm cl}$ (see~\cite{pa98}). In other words, if $Q$ is torsion, then $K(\varphi^{-1}(Q))=K(\sqrt[p]{\alpha})$ for some $\alpha\in\mathcal{O}_{K,S}^{\times}$.
However, the behaviour of $\psi^{\rm cl}$ on points of infinite order is not fully understood.

It has been proved by Agboola~\cite{a94} that $\psi^{\rm cl}$ has an alternative ``geometric'' description. Namely, if we let $P\in E'(K)$ be the generator of the kernel of the dual isogeny $\hat\varphi$, then
$$
\psi^{\rm cl}(Q)=\langle P,Q\rangle^{\rm cl}_S
$$
where $\langle \cdot, \cdot\rangle^{\rm cl}_S$ is the so-called class group pairing introduced by Mazur and Tate~\cite{mt}. Therefore our Galois structure problem can now be reformulated in terms of the intersection theory on the N\'eron model of $E'$ over $\mathcal{O}_{K,S}$.

In order to study the Galois module structure of $\varphi^{-1}(Q)$ over the full ring of integers, one needs to handle bad places. The first author has been investigating this. Let $\E$ (resp. $\E'$) be the N\'eron model of $E$ (resp. $E'$) over $X =\Spec(\mathcal{O}_K)$, and let $\E^0(X)$ (resp. $\E'^0(X)$) be the subgroup of points that have everywhere good reduction. Again, we have a class-invariant homomorphism
$$
\psi^0\colon \E'^0(X)/\varphi(\E^0(X))\longrightarrow \Cl(K)[p]
$$
that can also be described using the class group pairing. It is proved in~\cite{gil1} that torsion points belong to the kernel of $\psi^0$, but it is not possible in general to extend this morphism $\psi^0$ to all of $E'(K)/\varphi(E(K))$. To overcome this, the first author introduced a more general pairing, called the logarithmic class group pairing, that has values in the so-called logarithmic class group $\logpic(X,S)$. This pairing combines the class group pairing by Mazur and Tate with Grothendieck's monodromy pairing (see~\cite{gil5}). As a result, we get a class-invariant homomorphism
$$
\psi\colon E'(K)/\varphi(E(K))\longrightarrow \logpic(X,S)[p]
$$
which generalises both $\psi^{\rm cl}$ and $\psi^0$. Of course, this new morphism is no longer expected to vanish on torsion points, because the monodromy pairing is known to be non degenerate.

In this paper, we give explicit formulae for the logarithmic class group pairing on elliptic curves, which are suited for implementation in a computer algebra system. (See section \ref{explicit_pairing_subsec}, in particular Lemma~\ref{explicit_pairing_lem}, Lemma~\ref{e_zero_lem} and the sections~\ref{mult_case_subsubsec} and~\ref{additive_case_subsubsec}.) This allows us to conduct numerical experimentations on these objects that, up to now, have been considered from a purely conceptual point of view. 
Our derivation of the formulae leads us to study the intersection property of certain divisors on the minimal regular model of $E$ as cubic logarithmic line bundles, similar to what was done in~\cite{bosch_lorenzini} for the monodromy pairing.

Under some additional hypotheses (see below in section~\ref{hyp_subsec}) on $E$ and $\varphi$, we perform a descent via the isogeny $\varphi$. We explicitly describe the Selmer group $\Selphi$ as a subgroup of the fppf cohomology group $H_{\rm fl}^1(U_1,\mu_p)$ where $U_1$ is obtained by removing from $X$ a certain subset $S_1$ of the set of places where $E$ has split multiplicative reduction (see section~\ref{descent_sec} for details). At another small set $S_2$ of places of  $K$, we have to impose local conditions. Theorem~\ref{pdescent_thm} shows that we have an exact sequence
\begin{equation*}
 \xymatrix@1{0\ar[r] & \Selphi \ar[r] & H_{\rm fl}^1(U_1,\mu_p) \ar[r] & \bigoplus_{v \in S_2} H_{\rm fl}^1(\mathcal{O}_v , \mu_p) } 
\end{equation*}
and Theorem~\ref{global_duality_thm} describes the cokernel on the right in terms of the Selmer group $\Selphihat$ for the dual isogeny. The methods for the descent are very much inspired by~\cite{ms} except that we use fppf cohomology.

Our interest then focuses on the link between the logarithmic class-invariant homomorphism and the Selmer group.
Using Kummer theory in log flat topology, we get an exact sequence
$$
\xymatrix@1{
0  \ar[r] & \mathcal{O}_K^{\times}/p \ar[r] & H_{\rm fl}^1(U_1,\mu_p)  \ar[r] & \logpic(X,S_1)[p] \ar[r] & 0. }
$$
Let $\psi_{\Sel}\colon\Selphi\to\logpic(X,S_1)[p]$ be the restriction of the map on the right to the subgroup $\Selphi$. 
As our main ingredient to the following investigation, we prove in Theorem~\ref{pairing_and_selmer_thm} that the class-invariant homomorphism $\psi$ is equal to the  composition of the Kummer map and the map $\psi_{\Sel}$, in other words the following diagram commutes:
 \begin{equation*}
  \xymatrix@1@C+3mm{0\ar[r] & E'(K)/\varphi(E(K)) \ar[r]^(0.55){\kappa}\ar[rd]_(0.5){\psi} & \Selphi \ar[d]^{\psi_{\Sel}} \ar[r]  & \Sha(E/K)[\varphi] \ar[r] &  0 \\
  & & \logpic(X,S_1)[p] }
  \end{equation*}
where $\Sha(E/K)$ is the Tate-Shafarevich group of $E/K$, and the horizontal line is exact. This links the logarithmic class group pairing to the descent data.

By the kernel-cokernel sequence, we get an exact sequence
$$\xymatrix@1{
0\ar[r] & \ker(\psi) \ar[r] & \ker(\psi_{\Sel}) \ar[r] & \Sha(E/K)[\varphi]  \ar[r] & \coker(\psi) \ar[r] & \coker(\psi_{\Sel}) \ar[r] & 0}
$$
Understanding this sequence can be viewed in terms of the notion of ``visibility" in the sense of Cremona and Mazur~\cite{cm00}: Instead of comparing the descent maps for two isogenies, we compare $\varphi$-descent with Kummer theory in log flat topology. 
For instance when $\coker(\psi)=0$, one could say, by analogy with the terminology in~\cite{cm00}, that all elements in $\Sha(E/K)[\varphi]$ are \emph{explained} by units.

In the special case when the set $S_2$ is empty, then $\ker(\psi_{\Sel})=\mathcal{O}_K^{\times}/p$ and $\coker(\psi_{\Sel})=0$. Hence $\Sha(E/K)[\varphi]$ is completely determined by the kernel and the cokernel of $\psi$. In particular, this happens if $E$ has everywhere good reduction, in which case these relations have been pointed out previously (see~\cite{ct95} for a survey).

The behaviour of $\psi$ on points of infinite order has to be explored. In Theorem~\ref{Levinlike_thm} we prove that, given a curve $E'$ over $\mathbb{Q}$ with a $p$-torsion point $P\in E'(\mathbb{Q})$, there exists an infinity of imaginary quadratic fields $K$ and a point $Q\in E'(K)$ of infinite order such that $\langle P,Q\rangle^{\rm cl}_S\neq 0$. We also remark that $\psi$ is injective when $K$ is a quadratic imaginary field. Finally, we compute some examples of points of infinite order defined over real quadratic fields that lie or do not lie in the kernel of $\psi$.

Let us review briefly the contents of this paper. At the end of the introduction, we give general hypotheses and notations. In section~\ref{local_sec} and~\ref{descent_sec}, we perform the descent. In section~\ref{class_group_pairing_sec}, which is independent of the rest of the paper, we give explicit formulae for the logarithmic class group pairing on an elliptic curve. In section~\ref{link_sec}, we describe the connection between Selmer groups and logarithmic class groups via the pairing. In section~\ref{quadratic_sec}, we focus on quadratic fields, and in final section~\ref{numerical_sec} we give examples over fields of higher degree.

\subsection{Hypotheses}\label{hyp_subsec}

Let $K$ be a number field and $E/K$ an elliptic curve. Throughout this paper, except for section~\ref{class_group_pairing_sec}, we will assume the following hypotheses.
\begin{enumerate}\renewcommand{\theenumi}{(Hyp~\arabic{enumi})}
  \item $p$ is an odd prime.                                                             \label{odd_asp}
  \item $E$ admits an isogeny $\varphi\colon E \to E'$ of degree $p$ defined over $K$. 
  \item The kernel of the dual isogeny $\hat\varphi\colon E' \to E$ is generated by a $p$-torsion point $P\in E'(K)[p]$. \label{genmup_asp}
  \item If $p=3$, none of the bad fibres of $E$ nor of $E'$ is of type IV or IV${}^*$.                \label{comp3_asp} 
\renewcommand{\theenumi}{(Hyp~\arabic{enumi}')}
   \item $K/\QQ$ is not ramified at $p$.\label{ramp_hyp}
\end{enumerate}

The last condition~\ref{ramp_hyp} can be weakened quite a bit once some additional notation is introduced. All conclusions in this article hold with condition~\ref{ramp_hyp} replaced by hypothesis~\ref{atp_hyp} in section~\ref{atp_subsec}. For instance for semistable curves we will only need that the ramification $e_v$ index at all places $v$ above $p$ satisfies $e_v < p-1$.

While the first three hypotheses are just the setting in which we have placed ourselves, the fourth is merely to exclude that $p$ might divide the order of the group of components at additive places.

\subsection{Notations}

If $v$ is a place of the number field $K$, we write $K_v$, $\mathcal{O}_v$, $\mathfrak{m}_v$, and $k_v$ for the completion at $v$, its ring of integers, its maximal ideal and its residue field respectively. 

We will write $\E$ for the N\'eron model of $E$ over the ring of integers $\OK$. Let $\E^0$ be the connected component of the identity and let $\Phi$ be the groups of components. 
So we obtain an exact sequence of sheaves on the fppf site
\begin{equation}\label{componentdef_eq}
 \xymatrix@1{0  \ar[r] & \E^0 \ar[r] & \E \ar[r] & \Phi \ar[r] & 0. }
\end{equation}
For all finite places $v$ of $K$, we denote by $\Phi_v$ the group of components of the fibre of $\E$ at $v$. Of course, the sheaf $\Phi$ can be decomposed as
$$
\Phi=\bigoplus_{v} \Phi_v
$$
where $v$ runs through the set of places of bad reduction of $\E$.
The number of components of the fibre of $\E$ at $v$, i.e. the rank of the $\Phi_v$, will be denoted by $m_v$, while the number of $k_v$-rational points $\# \Phi_v(k_v)$, the so-called Tamagawa number, is written $c_v$. 
We fix a choice of a  N\'eron differential, denoted by $\omega$. Similarly $\E'$, $\E'^0$, $\Phi'$, $m'_v$, $c'_v$ and $\omega'$ denote the corresponding objects for the elliptic curve $E'$.
 By abuse of notation, we also denote by $\varphi\colon \E\to \E'$ the unique morphism between the N\'eron models extending the isogeny $\varphi$. Note that $\varphi$ induces a map $\varphi\colon \E^0 \to \E'^0$ and another on the components $\varphi\colon \Phi \to \Phi'$.

For any ring $R$, we will denote the quotient of the unit group $R^{\times}$ by its $p$-th powers, simply by $R^{\times}/p$.

All cohomology groups are computed with respect to the fppf site. We will write $H^1(\OK, \cdot)$ as a short hand for $H^1(\Spec(\OK),\cdot)$, similarly for $H^1(\mathcal{O}_v,\cdot)$.

\section{Local information on the isogeny}\label{local_sec}

In this section $v$ will be a finite place of $K$. (Since $p$ is odd, we never have to worry about infinite places.) We wish to describe the isogeny $\varphi$ locally at the level of N\'eron models.

If $v$ is not above $p$, it follows immediately that $\E'[\hat\varphi]$ is isomorphic to $\Zp$ as a group scheme over the ring of integers $\mathcal{O}_v$ of $K_v$ and hence, by Cartier duality, $\E[\varphi]$ is  $\mu_p$. This is not necessarily the case for places above $p$ as we will see later.

\begin{lem}
  For any $v\nmid p$, the curves $E$ and $E'$ have semistable reduction at $v$. 
\end{lem}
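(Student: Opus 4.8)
The plan is to rule out additive reduction. An elliptic curve over the local field $K_v$ has good, multiplicative, or additive reduction, and semistable means good or multiplicative; so it suffices to show that neither $E$ nor $E'$ can be additive at $v$. I would argue by contradiction, exploiting the fact that the isogeny kernels furnish a subgroup of order $p$ that is prime to the residue characteristic $\ell_v$ of $k_v$ (prime to $\ell_v$ precisely because $v\nmid p$).

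First I would record the only input needed. Since we are in characteristic zero the isogeny $\varphi$ is separable of degree $p$, so $E[\varphi](\bar K_v)\cong\ZZ/p\ZZ$ has exactly $p$ geometric points; moreover $E[\varphi]$ is killed by $[p]=\hat\varphi\circ\varphi$, hence $E[\varphi](\bar K_v)\subseteq E(\bar K_v)[p]$. (By \ref{genmup_asp} and Cartier duality it is even $\mu_p$, as recalled above, but for this lemma only the order matters.) Likewise $E'[\hat\varphi]=\langle P\rangle\cong\ZZ/p\ZZ$ by \ref{genmup_asp}.

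Next comes the reduction step. Because $v\nmid p$, reduction is injective on the prime-to-$\ell_v$ torsion: the kernel of reduction is a pro-$\ell_v$ group and therefore has no nontrivial $p$-torsion. Suppose $E$ has additive reduction at $v$, so that $\E^0_{k_v}\cong\mathbb{G}_a$; since $p$ is invertible in $\bar k_v$, multiplication by $p$ is an automorphism of $\mathbb{G}_a$ and $\mathbb{G}_a(\bar k_v)[p]=0$. Thus the order-$p$ group $E[\varphi](\bar K_v)$ injects under reduction into $\E_{k_v}(\bar k_v)$, meets the identity component $\E^0_{k_v}(\bar k_v)$ trivially, and hence maps injectively into the geometric component group $\Phi_v(\bar k_v)$. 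This forces $p\mid\#\Phi_v(\bar k_v)$.

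Finally I would invoke the classification of additive fibres, for which $\#\Phi_v(\bar k_v)\leq 4$. For $p\geq 5$ this already contradicts $p\mid\#\Phi_v(\bar k_v)$, so $E$ is semistable. For $p=3$ the divisibility forces the fibre to be of Kodaira type IV or IV${}^*$, which is exactly what \ref{comp3_asp} excludes; this is the one genuinely delicate point, and the only place where that hypothesis enters. The identical argument applied to $E'[\hat\varphi]=\langle P\rangle$ (using \ref{comp3_asp} for $E'$) shows that $E'$ is semistable as well; alternatively, one may observe that semistability is an isogeny invariant, so the conclusion for $E'$ also follows from that for $E$.
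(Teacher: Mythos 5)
Your overall strategy is the one the paper uses --- reduce a subgroup of order $p$ modulo $v$, observe that $\mathbb{G}_a$ has no $p$-torsion in characteristic $\ell_v\neq p$, and note that the component group of an additive fibre has order prime to $p$ by \ref{odd_asp} and \ref{comp3_asp} --- but the way you run it for $E$ has a genuine gap, located exactly in the parenthetical ``only the order matters''. You reduce the geometric kernel $E[\varphi](\bar K_v)$ into the special fibre of $\E$ over $\mathcal{O}_v$ and read off its image in $\Phi_v(\bar k_v)$. Such a reduction map only exists for points defined over the maximal unramified extension $K_v^{\mathrm{nr}}$: the N\'eron model does not commute with ramified base change, and for a point of $E(L_w)$ with $L_w/K_v$ ramified the relevant special fibre is that of the N\'eron model over $\mathcal{O}_w$, whose identity component and component group can be entirely different (in the potentially good case the component group over $\mathcal{O}_w$ may be trivial and the identity component an elliptic curve, so no contradiction arises). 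Additive reduction is precisely the situation in which prime-to-$\ell_v$ torsion is typically ramified --- by N\'eron--Ogg--Shafarevich that is essentially what additive reduction means --- so you cannot take $E[\varphi](\bar K_v)\subset E(K_v^{\mathrm{nr}})$ for granted. The fact that rescues the step is the one you explicitly set aside: by \ref{genmup_asp} and Cartier duality $E[\varphi]\cong\mu_p$ as a Galois module, and since $v\nmid p$ the extension $K_v(\mu_p)/K_v$ is unramified, so the kernel does lie in $E(K_v^{\mathrm{nr}})$ and your computation with $\Phi_v(\bar k_v)$ goes through. The order of the kernel alone does not suffice; its unramifiedness is essential.

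Your closing remarks already contain a complete repair, and it is in substance the paper's own proof: the generator $P$ of $E'[\hat\varphi]$ is $K$-rational, hence $K_v$-rational, so it reduces into the special fibre of $\E'$ with no ramification issue; injectivity of reduction on prime-to-$\ell_v$ torsion and the absence of $p$-torsion in $\mathbb{G}_a$ and in $\Phi'_v$ then give the contradiction for $E'$, and the statement for $E$ follows since the coarse reduction type is an isogeny invariant. The numerical bookkeeping at the end ($\#\Phi_v(\bar k_v)\leq 4$ for additive fibres, so $p\geq 5$ is immediate and $p=3$ forces type IV or IV${}^*$, excluded by \ref{comp3_asp}) is correct and is the same use of the hypotheses as in the paper. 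I would recommend rewriting the argument around the rational point $P$, or else stating up front that $E[\varphi]$ is unramified at $v$.
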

\begin{proof}
  Suppose the reduction is additive at $v$.
  There is a point $P$ of order $p$ on the curve $E'$ defined over $K_v$. By assumption~\ref{odd_asp} and~\ref{comp3_asp}, the group of components $\Phi'$ of $\E'$ has order prime to $p$, so $P$ reduces to a point in $\E'^0$. Since the reduction map is injective on points of order $p$ as $v \nmid p$, the reduced point has order $p$, too. But the additive group has no $p$-torsion over $k_v$, so we reached a contradiction.
\end{proof}

We will first describe the cases of multiplicative reduction, both for $v$ above $p$ and away from $p$.
There are four cases to consider, depending on whether the reduction is split or non-split multiplicative and depending on which of the two isogenous curves has more connected components. We choose to denominate the latter property by saying the place is ``forward'' or ``backward'' for $\varphi$.

\subsection{The forward split case} 
In this case, the reduction is \textbf{split} multiplicative and $\E$ has \textbf{less} components than $\E'$.

The elliptic curves are isomorphic to Tate curves over $K_v$ for certain parameters $q$ and $q'$, respectively. We have $E(K_v) = K_v^{\times}/q^{\ZZ}$ and $E'(K_v) = K_v^{\times}/(q')^{\ZZ}$. Since the number of components is given by the valuation of the parameters, we find that $q' = q^p$ and that the isogeny $\varphi\colon E(K_v) \to E'(K_v)$ is induced by the map $x\mapsto x^p$ on $K_v^{\times}$. We conclude that $\Phi'/\varphi(\Phi)=\Zp$ and the cokernel $E'(K_v)/\varphi(E(K_v))$ is isomorphic to $K_v^{\times}/p$. Also we see that $\E[\varphi]$ is contained in $\E^0$.

The dual map $\hat\varphi_{K_v}$ is then induced by $x\mapsto x$. It follows that $\E'[\hat\varphi] = \Phi'[\hat\varphi] = \Zp$, in other words the point $P$ reduces to the singular point at $v$. However, $\hat\varphi\colon E'(K_v)\to E(K_v)$ and hence $\hat\varphi\colon \Phi' \to \Phi$ are surjective. Furthermore, we also see that $\varphi^*(\omega') = p \omega$ and $\hat\varphi^*(\omega) = \omega'$ for the N\'eron differentials.

\subsection{The backward split case}\label{S2_subsec} 
Here the reduction is still \textbf{split} multiplicative, but $\E$ has \textbf{more} components than $\E'$.

The situation is as above except that $E$ and $E'$ switch their places. This time, we have $q = (q')^p$ and the map $\varphi_{K_v}$ identifies with $x\mapsto x$ and its dual with $x\mapsto x^p$. So
$\E^0[\varphi]$ is trivial and $\E[\varphi] = \Phi[\varphi]$. Since $\Phi$ is a constant group scheme, we find that $\mu_p = \Zp$ over $\mathcal{O}_v$, which implies that $\# k_v \equiv 1 \pmod{p}$. In particular, this case can not occur for a place above $p$. This time $\varphi_{K_v}$ is surjective onto $E'(K_v)$, but the cokernel of $\hat\varphi_{K_v}$ is isomorphic to $K_v^{\times}/p$.

For the dual isogeny, we get $\E'[\hat\varphi] \subset \E'^0$ and so $\Phi'[\hat\varphi] = 0$ and $\Phi/\hat\varphi(\Phi') = \Zp$.

\subsection{The backward non-split case} 
Now, the reduction is \textbf{non-split} multiplicative and $\E$ has \textbf{more} components than $\E'$.

Let $L_w/K_v$ be the quadratic unramified extension. Over $L_w$ the reduction becomes split, yet the N\'eron model stays the same. In particular, over $L_w$ we are in the backward split case.

We start by proving that this case can not occur when $v$ is above $p$. The Tamagawa number $c'_v$ is coprime to $p$ by assumption, so the point $P\in E'(K_v)$ must lie on the identity component. Since the group of non-singular point on the reduction has order coprime to $p$, the point $P$ must lie in the formal group. 
Since $\hat\varphi$ has a non-trivial kernel in the formal group we must have that $\hat\varphi^*(\omega) = \omega'$. This will still be the same over the quadratic extension $L_w$ since the N\'eron model does not change. But this is in contradiction with what we found in the backward split case. Hence $v\nmid p$.
 
Let $E^{\dagger}/K_v$ be the twist of $E$ corresponding to $L_w/K_v$. Then $E^{\dagger}$ admits an isogeny $\varphi^{\dagger}$ of degree $p$, for which the place is also backward split.
So $\E[\varphi] = \mu_p$ becomes $\Zp$ over $\mathcal{O}_w$, but they are not isomorphic yet over $\mathcal{O}_v$. In fact $\E[\varphi]$ is the twist of $\E^{\dagger}[\varphi^{\dagger}]$, which is isomorphic to $\Zp$. Hence $\E[\varphi]=\mu_p$ is a quadratic twist of $\Zp$. This implies that $\#k$ must be congruent to $-1$ modulo $p$.

We obtain all information about $\varphi$ by comparing the corresponding information for $E^{\dagger}$ and $E\times L_w$. For instance, 
$\E[\varphi] = \Phi[\varphi]$ and $\Phi'/\varphi(\Phi) = 0$. Similarly $\E'[\hat\varphi] \subset \E'^0$ and $\Psi[\hat\varphi] = 0$ and $\Phi/\hat\varphi(\Phi') = \mu_p$.

The cokernel of $\hat\varphi\colon E(K_v)\to E'(K_v)$ is the quotient of $L_w^{\times}/p$ by $K_v^{\times}/p$. This is equal to $\Zp$, because $L_w$ contains $\mu_p$, but $K_v$ does not. The cokernel of $\varphi_{K_v}$ is trivial. 

\subsection{The forward non-split case}
Finally, we have \textbf{non-split} multiplicative reduction, but $\E$ has \textbf{less} components than $\E'$.

This case is impossible in fact. Since $\Phi$ is of order strictly smaller than $\Phi'$, the induced map $\hat\varphi\colon \Phi'\to \Phi$ can not be injective. Hence the point $P$ can not reduce to the connected component. But this is impossible because $c'_v = \# \Phi'(k_v)$ is coprime to~$p$ if the reduction is non-split multiplicative.

\subsection{Summary}\label{summary_subseq}
Here is a table that summarises all results so far in the section.
\begin{center}
  \begin{tabular}{l|c|c|c}
   Case                    & forward  & backward     & backward \\ \hline &&& \\[-2ex]
   Reduction               & split    & split       & non-split \\
   Components              & $pm_v=m'_v$  & $m_v = p m'_v$  &  $m_v=p m'_v$ \\
   Tamagawas               & $ c'_v=pc_v$ & $c_v = p c'_v$  & $c_v=c'_v=1\text{ or }2$\\ \hline &&& \\[-2ex]
  $\E^0[\varphi]$          & $\mu_p$  & 0           & 0 \\
 $\Phi[\varphi]$           &  0       & $\mu_p=\Zp$ & $\mu_p$\\
 $\Phi'/\varphi(\Phi)$     &  $\Zp$   & 0           & 0\\
  $E'(K_v)/\varphi$        & $K_v^{\times}/p$ & 0 & 0 \\ \hline &&& \\[-2ex]
 $\E'^0[\hat\varphi]$      &  0       & $\Zp$       &$\Zp$\\
 $\Phi'[\hat\varphi] $     &  $\Zp$   & 0           & 0\\
 $\Phi/\hat\varphi(\Phi')$ &  0       & $\Zp$       & $\mu_p$\\
  $E(K_v)/\hat\varphi$     &  0       & $K_v^{\times}/p$ & $\Zp$\\ \hline &&& \\[-2ex]
 $\# k_v $                 &  any     & $\equiv 1 \pmod{p} $   & $\equiv -1 \pmod{p}$     
  \end{tabular}
\end{center}

\subsection{Places above $p$}\label{atp_subsec}
Let $v$ be a place above $p$. We have seen so far that the reduction at $v$ is either good, forward split multiplicative or additive. Since the Tamagawa numbers at additive places are coprime to $p$, the point $P\in E'(K_v)[\hat\varphi]$ will reduce to a singular point only in the forward split case. 

Recall that we fixed N\'eron differentials $\omega$ and $\omega'$ for $E$ and $E'$ respectively. If $z$ is such that $\varphi^*(\omega') = z \cdot \omega$ then we call its valuation $a_v(\varphi) = v(z)$ the \textit{N\'eron scaling} of $\varphi$ at $v$. Similar, we define $a_v(\hat\varphi)$ and we have $a_v(\varphi) + a_v(\hat\varphi) = e_v = v(p)$. It can be computed easily from the power series expansion of the isogeny restricted to the formal group; in fact, it is simply the valuation of the leading term.

We can now make precise how we can weaken the condition~\ref{ramp_hyp} in the introduction. A place $v$ is called \textit{forward} if $a_v(\hat\varphi) = 0$ and \textit{backward} if $a_v(\varphi) = 0 $.
Note that forward split places above $p$ are indeed forward, so there is no contradition with the previous definition.
\begin{enumerate}
\renewcommand{\theenumi}{(Hyp~\arabic{enumi})}\setcounter{enumi}{4}
  \item All places above $p$ are either forward or backward for $\varphi$. Moreover, if a place $v\mid p$ is backward, then we impose that $\mu_p(K_v) = 1$. \label{atp_hyp}
\end{enumerate}
Of course, if $p$ is unramified in $K/\QQ$ then this condition is automatically satisfied, in other words hypothesis~\ref{atp_hyp} implies hypothesis~\ref{ramp_hyp}.
We will see that this new hypothesis may only restrict the situation in the presence of places of good supersingular reduction above $p$ or additive places with potential supersingular reduction. The former can only arise when the ramification index at the place is larger than $p-1$.

\begin{lem}
 If the reduction is good ordinary or potentially good ordinary at a place $v$ above $p$ 
 then either $a_v(\varphi) = 0$ or $a_v(\hat\varphi) = 0$.
\end{lem}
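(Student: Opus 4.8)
My plan is to reinterpret the two N\'eron scalings as detecting whether the kernels are étale. Since $a_v(\varphi)=v(c_1)$ is the valuation of the scalar $c_1$ by which $\varphi^\ast$ acts on invariant differentials, we have $a_v(\varphi)=0$ exactly when $\varphi$ induces an isomorphism on cotangent spaces, i.e. when $\varphi$ is étale along the identity section; for good reduction this is the same as saying that the order-$p$ group scheme $\ker\varphi$ is étale over $\mathcal{O}_v$, and symmetrically for $\hat\varphi$. As $a_v(\varphi)+a_v(\hat\varphi)=e_v$, it suffices to show that at least one of $\ker\varphi$, $\ker\hat\varphi$ is étale. The only arithmetic input I need is that ordinarity makes the connected-étale sequence of the $p$-torsion have both terms of order exactly $p$.

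In the good ordinary case $\E'[p]$ is finite flat over $\mathcal{O}_v$ and fits in a connected-étale sequence $0\to\E'[p]^0\to\E'[p]\to\E'[p]/\E'[p]^0\to 0$ with $\E'[p]^0\cong\mu_p$ and $\E'[p]/\E'[p]^0\cong\Zp$. The subgroup $\ker\hat\varphi$ has order $p$, so $\ker\hat\varphi\cap\E'[p]^0$ is either everything or trivial. If it is everything, then $\ker\hat\varphi=\E'[p]^0$ is connected and $\ker\varphi\cong\E'[p]/\ker\hat\varphi$ is étale, so $a_v(\varphi)=0$; if it is trivial, then $\ker\hat\varphi$ maps injectively to the étale quotient, hence is itself étale, so $a_v(\hat\varphi)=0$. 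One of the two scalings thus vanishes. (At a supersingular place the connected part would have order $p^2$ and this dichotomy would break down, which is exactly the case the hypotheses must exclude.)

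For potentially good ordinary reduction the formal group over $\mathcal{O}_v$ has additive reduction, so I would first pass to a finite extension $K_w/K_v$ over which $E$, and hence $E'$, has good ordinary reduction; the previous paragraph then gives $a_w(\varphi)=0$ or $a_w(\hat\varphi)=0$. To transport this back to $v$ I compare the N\'eron differentials over the two bases. Writing $\omega_v=u_E\,\omega_w$ and $\omega'_v=u_{E'}\,\omega'_w$ for the scalings relating the base-changed minimal model over $\mathcal{O}_v$ to the good model over $\mathcal{O}_w$, a substitution in $\varphi^\ast\omega'=z\,\omega$ gives $e(w/v)\,a_v(\varphi)=a_w(\varphi)+\big(w(u_{E'})-w(u_E)\big)$ and the analogous identity for $\hat\varphi$. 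Hence the vanishing descends as soon as $w(u_E)=w(u_{E'})$, equivalently $v(\Delta^E_{\min})=v(\Delta^{E'}_{\min})$.

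The heart of the argument, and the step I expect to be the main obstacle, is precisely this equality of minimal discriminant valuations. I would deduce it from the fact that $E$ and $E'$ share the same reduction type at $v$: being linked by an isogeny of degree $p$, they have isomorphic prime-to-$p$ Tate modules, so the finite action of inertia classifying the potentially good reduction is the same for both, and the conductor exponent $f_v$ is an isogeny invariant. Hypotheses~\ref{odd_asp} and~\ref{comp3_asp} intervene here to guarantee $p\nmid c_v$ and $p\nmid c'_v$ (excluding the types carrying a factor $p$ in the group of components), so that $\varphi$ identifies the groups of components; combining this with Ogg's formula forces $v(\Delta^E_{\min})=v(\Delta^{E'}_{\min})$. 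The integrality and non-negativity of the scalings provide a useful consistency check, since an unequal pair of discriminant valuations would produce a non-integral value of $a_v(\hat\varphi)$, which is impossible. With the defects equal the displayed identities reduce to $e(w/v)\,a_v=a_w$, so whichever scaling vanishes over $\mathcal{O}_w$ vanishes over $\mathcal{O}_v$, which is the claim.
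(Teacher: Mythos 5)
Your treatment of the good ordinary case is correct and is essentially the paper's own argument in different clothing: the paper says that the formal group has height $1$, so one of $\varphi$, $\hat\varphi$ is separable on the reduction and hence injective on differentials, while you say that a degree-$p$ flat subgroup of the ordinary $p$-torsion is either the connected part or \'etale. These are the same dichotomy, decided in both formulations by whether $P$ reduces to the origin.

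The potentially good case, however, has a genuine gap at exactly the step you flag as the heart of the argument, namely $w(u_E)=w(u_{E'})$. Ogg's formula reads $v(\Delta_{\min})=f_v+m_v-1$, where $m_v$ is the number of irreducible components of the special fibre of the \emph{minimal regular model}, not the order of the component group $\Phi_v$ of the N\'eron model; knowing that $\varphi$ identifies the component groups says nothing about $m_v$. Concretely, types II and II${}^*$ both have trivial component group and the same tame conductor exponent, yet $m_v=1$ versus $9$, i.e. $v(\Delta_{\min})=2$ versus $10$; worse, the inertia action on $T_\ell$ for $\ell\neq p$ has eigenvalue multiset $\{\zeta_6,\zeta_6^{-1}\}$ in both cases, so the prime-to-$p$ Tate module does not distinguish II from II${}^*$ (nor III from III${}^*$, IV from IV${}^*$). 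None of the invariants you invoke therefore pins down $v(\Delta_{\min})$. The repair is to promote your parenthetical integrality remark to the actual proof: the two curves do have the same $e=e(w/v)$, since the order of the image of inertia on $T_\ell$ is an isogeny invariant, and for a mismatched pair of types with that $e$ the difference $w(u_{E'})-w(u_E)$ equals $\pm 4$, $\pm 2$, $\pm 1$ for $e=6,4,3$, never divisible by $e$; combined with the vanishing of one of $a_w(\varphi)$, $a_w(\hat\varphi)$ supplied by the good ordinary case over $L_w$, your identity $e\,a_v(\varphi)=a_w(\varphi)+w(u_{E'})-w(u_E)$ and its analogue for $\hat\varphi$ then contradict the integrality and non-negativity of the scalings over $\mathcal{O}_v$. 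For comparison, the paper sidesteps all of this by treating only a quadratic ramified extension, i.e. Kodaira type I${}_n^*$, where a direct discriminant computation shows that $u$ and $u'$ are both uniformisers of $L_w$, so the difference is automatically zero; your version is more ambitious in scope, but as written the key equality is unproved.
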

\begin{proof}
  We treat first the case when $E$ has good ordinary reduction. Since the formal group is of height $1$, either $\varphi$ or $\hat\varphi$ induces on the reduction a separable isogeny. A separable isogeny between elliptic curves induces an injection on differential forms, so the N\'eron scaling for this isogeny has to be zero.

  More precisely, if $P$ reduces to a non-zero point in the reduction, then $\hat\varphi$ is separable on the reduction. Hence $a_v(\hat\varphi) = 0$ and so the place is forward. 

  Otherwise, if $P$ reduces to the zero point in the reduction, then $\hat\varphi$ induces a purely inseparable isogeny on the reduction, so $\varphi$ has to induce a separable isogeny. Hence $a_v(\varphi) =0$ and the place is backward. In this case, $P$ belongs to the formal group and by~\cite[Theorem IV.6.1]{sil1} this implies that $e_v > p-1$.

  The cases of multiplicative reduction have already been treated, so we may pass to the case when $E$ has additive, potentially good or multiplicative reduction at $v$. So the Kodaira type is I${}_n^*$ for some $n\geq 0$ and there is a quadratic ramified extension $L_w/K_v$ such that $E\times L_w$ has semistable reduction. Write $w$ for its normalised valuation. Fix a minimal equation of $E/K_v$. Let $u$ be coefficient needed in the transformation of the Weierstrass equation to become minimal over $L_w$: More precisely $u$ will be such that $u\cdot \omega_{E/L_w} =  \omega_{E/K_v}$ holds for the corresponding N\'eron differentials. Therefore the minimal discriminants satisfy $\Delta_{E/K_v} = u^{12}\cdot \Delta_{E/L_w}$. From~\cite[p. 365]{sil2} we find that $v(\Delta_{E/K_v} ) = 6+n$. The type over $L_w$ will be I${}_{2n}$ and so $w(\Delta_{E/L_w}) = 2n$. Hence we get $2\cdot (6+n) = 12\cdot w(u) + 2n$ and we conclude that $u$ is a uniformiser of $L_w
 $. The same argument will apply to the corresponding coefficient $u'$ for $E'$. Write $z_w$ for the constant such that $\varphi^*(\omega_{E'/L_w}) = z_w\cdot \omega_{E/L_w}$ whose valuation is the N\'eron scaling of $\varphi$ over $L_w$. Now
  $$\varphi^*(\omega_{E'/K_v} ) = \varphi^*( u' \cdot \omega_{E'/L_w} ) = u' \cdot z_w \cdot \omega_{E/L_w}  = \frac{u'}{u} \cdot z_w \cdot \omega_{E/K_v}$$
  shows that if the N\'eron scaling of $\varphi$ over $L_w$ is zero, then $a_v(\varphi) =0$. By the same argument, if the N\'eron scaling of $\hat\varphi$ over $L_w$ is zero, then $a_v(\hat\varphi)=0$. So the previous treated semistable ordinary cases imply the result for the additive potentially ordinary case.
\end{proof}
If instead the reduction is good and supersingular, then both $a_v(\varphi)$ and $a_v(\hat\varphi)$ are positive as $[p]$ is purely inseparable. It follows as before that this can only occur when $e_v > p-1$.  For instance, if $E'$ is the curve 37a1 and $K$ is the field $\QQ(P)$ over which it admits a point $P$ of order $3$, then $a_v(\varphi) = 6$ and $a_v(\hat\varphi) = 2$ for one of the places above $3$.
At the supersingular places above $p$ the special fibre of the group schemes $\E[\varphi]$ and $\E'[\hat\varphi]$ are both equal to the group scheme $\alpha_p$.

\begin{lem}
  Let $v$ be a place above $p$. 
  If $e_v < p -1$ and $E$ is semistable, then $v$ is forward.
\end{lem}
\begin{proof}
  We have seen in the proof of the previous Lemma that all good places are forward if $e_v < p-1$. The same is true for split multiplicative places and there are no non-split multiplicative places above $p$.
\end{proof}
Note that the semistability condition is necessary: The example of the elliptic curve labeled 126a1 in Cremona's tables shows that there are curves even defined over $\QQ$ with additive backwards places.

Lemma~3.8 in~\cite{sch} says that for an isogeny $\psi\colon A \to A'$ between elliptic curves, we have
\begin{equation*}
  \# A'(K_v)/\psi \bigl(A(K_v)\bigr) = (\# k_v)^{a_v(\psi)}\cdot \# A(K_v)[\psi] \cdot \frac{c_v(A')}{c_v(A)}
\end{equation*}
where the last is the ratio of Tamagawa numbers. Let $n_v$ be the degree of $K_v$ over $\QQ_p$. Taking into account that $\#E(K_v)[\varphi] = \#\mu_p(K_v) = 1$ for all backward places above $p$, we deduce from it the following.
\begin{equation}\label{coker_phi_at_p_eq}
  \# E'(K_v) / \varphi(E(K_v)) = 
   \begin{cases}
     p^{n_v+1} & \text{if $v$ is forward split multiplicative}\\
     p^{n_v} & \text{otherwise} \\
     1       & \text{if $v$ is backward}
   \end{cases}
\end{equation}
and similarly that
\begin{equation}\label{coker_phihat_at_p_eq}
  \# E(K_v) / \hat\varphi(E'(K_v)) = 
   \begin{cases}
     1 & \text{if $v$ is backward}\\
     p & \text{otherwise}\\
     p^{n_v+1} &\text{if $v$ is forward split multiplicative}
   \end{cases}
\end{equation}
Note that forward places above $p$ where $E$ does not have multiplicative reduction appear in the middle terms labeled ``otherwise''.

\begin{lem}\label{constant_lem}
 Let $v$ be a place above $p$ which is forward. Then $\E'[\hat\varphi]=\Zp$ as (finite flat) group schemes over $\mathcal{O}_v$. If moreover the reduction is not additive, then $\E[\varphi] = \mu_p$.
\end{lem}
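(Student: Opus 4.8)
The plan is to study the kernel $G:=\E'[\hat\varphi]$ directly as a finite flat group scheme of order $p$ over $\mathcal{O}_v$. Its generic fibre $G_{K_v}$ is the constant group scheme $\Zp$, since $\ker(\hat\varphi_{K_v})$ is generated by the rational point $P\in E'(K_v)$. The forward hypothesis says $a_v(\hat\varphi)=0$, i.e.\ the power series expressing $\hat\varphi$ on the formal groups has unit leading coefficient and is therefore a formal isomorphism. Hence $\hat\varphi$ is injective on the formal group of $\E'$, so $P$ does not lie in the formal group; equivalently $P$ reduces to a non-zero point, exactly as recorded in the proof of the good ordinary case above. This is precisely the statement that the identity component of $G$ is trivial, so $G$ is \'etale over $\mathcal{O}_v$.

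First I would then upgrade ``\'etale with constant generic fibre'' to ``constant''. Since $\mathcal{O}_v$ is Henselian and $G$ is finite \'etale, the reduction map gives $G(\mathcal{O}_v)=G(k_v)$ and, by the valuative criterion applied to the finite morphism $G\to\Spec(\mathcal{O}_v)$, also $G(\mathcal{O}_v)=G(K_v)$; all three groups therefore have $p$ elements. An \'etale group scheme of order $p$ over $k_v$ possessing $p$ rational points is the constant group scheme $\Zp$, and a finite \'etale cover of $\Spec(\mathcal{O}_v)$ is determined by its special fibre, so $G=\Zp$ over $\mathcal{O}_v$. This proves the first assertion for every forward place above $p$.

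For the second assertion I would invoke the classification of places above $p$: a forward place at which $E$ does not have additive reduction is either good ordinary or forward split multiplicative. In the forward split multiplicative case the Tate curve computation of the forward split subsection already shows that $\varphi$ is induced by $x\mapsto x^p$ and that $\E[\varphi]=\mu_p$ inside $\E^0$, so there is nothing more to do. In the good ordinary case $\E$ is an abelian scheme over $\mathcal{O}_v$; then $\varphi$ and $\hat\varphi$ are genuine isogenies of abelian schemes, and the Weil pairing realises $\E'[\hat\varphi]$ as the Cartier dual of $\E[\varphi]$. Dualising the identity $\E'[\hat\varphi]=\Zp$ from the first part yields $\E[\varphi]=(\Zp)^{\vee}=\mu_p$, which is exactly what is wanted (and is consistent with the connected--\'etale splitting of the $p$-torsion at an ordinary place).

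The step I expect to be the main obstacle is the passage from the purely generic statement to the integral one over $\mathcal{O}_v$: one must know that $G$ really is \emph{finite flat} over $\mathcal{O}_v$ (so that connected--\'etale theory and Cartier duality apply), and that the N\'eron-model kernel agrees with the flat closure of $\langle P\rangle$. The translation of the differential condition $a_v(\hat\varphi)=0$ into triviality of the identity component of $G$ is the conceptual crux, although it was already carried out implicitly in the earlier lemmas. Finally, the hypothesis that the reduction is not additive is essential in the second part, since it is exactly what guarantees either an abelian-scheme structure (for the Cartier-duality argument) or the explicit Tate-curve description; at additive places above $p$ the integral Cartier duality between $\E[\varphi]$ and $\E'[\hat\varphi]$ can break down and the kernel need not be $\mu_p$.
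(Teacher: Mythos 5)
Your proposal is correct and follows essentially the same route as the paper: the forward condition $a_v(\hat\varphi)=0$ makes $\hat\varphi$ a formal isomorphism, so $P$ cannot lie in the formal group and reduces to a non-zero non-singular point, whence $\E'[\hat\varphi]=\Zp$; the second claim is then the Tate-curve computation in the multiplicative case and Cartier duality in the good (necessarily ordinary) case. Your closing remarks on finite flatness and on the failure at additive places simply make explicit points the paper leaves implicit (and notes in the example 50b4 immediately after the lemma), so there is no substantive divergence.
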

\begin{proof}
 If the reduction is multiplicative, we know  the result already. Assume that the reduction is not multiplicative, so $P$ reduces to a non-singular point.

 Let $t$ be a parameter of the formal group of $E'$ with respect to a minimal Weierstrass model.
 We must have that the formal expansion of $\hat\varphi$ is of the form $t$ times a unit power series in $t$ with coefficients in $\mathcal{O}_v$. It induces therefore an isomorphism on the formal groups. Hence the point $P$ in the kernel of $\hat\varphi$ can not lie in the formal group and so it reduces to a non-zero, non-singular point in the reduction, i.e. it meets the special fibre of $\E'$ at a non-zero point of the connected component. Hence $\E'[\hat\varphi]$ is indeed isomorphic to $\Zp$ over $\mathcal{O}_v$.

 If the reduction is good, then the kernel of $\varphi$ is the Cartier dual of the kernel of $\hat\varphi$ and hence it is equal to $\mu_p$.
\end{proof}

We note that we can not say that $\E[\varphi]=\mu_p$ if the reduction is additive. In fact the kernel may well not be quasi-finite and hence not flat either. As an example, we can take the curve $E/\QQ$ with label 50b4 in Cremona's tables. It admits an isogeny $\varphi$ of degree $5$ to 50b2 satisfying our hypothesis, but the reduction at $v=5$ is additive (of type II${}^*$). The type of $\varphi$ is forward. It is easy to see that the reduction of the isogeny on the connected component of $\E$ is the zero map and hence we get that the fibre of $\E[\varphi]$ over $\mathbb{F}_5$ is equal to $\mathbf{G}_a$.

\subsection{The image of the local Kummer map}

For each place $v$, there are Kummer maps 
\begin{align*}
  \kappa_v &\colon E'(K_v)/\varphi(E(K_v)) \to H^1\bigl(K_v,E[\varphi]\bigr) \cong H^1\bigl(K_v,\mu_p\bigr) \cong K_v^{\times}/p \\
  \kappa'_v & \colon E(K_v)/\hat\varphi(E'(K_v)) \to H^1\bigl(K_v,E'[\hat\varphi]\bigr) \cong H^1\bigl(K_v,\Zp\bigr) \cong \homr\bigl(K_v^{\times}/p, \Zp\bigr)
\end{align*}
The very last isomorphism is given by the local reciprocity map.
Our aim is to compare the image of $\kappa_v$ with the natural subgroup $H^1(\mathcal{O}_v,\mu_p) = \mathcal{O}_v^{\times}/p$.

We now define two finite sets of places. First
\begin{equation}\label{s1_eq}
  S_1 = \Bigl \{ \text{ all forward split multiplicative  places} \Bigr\}.
\end{equation}
In other words, it is the set of all places (above $p$ or outside $p$) where $E$ has split multiplicative reduction and the Tamagawa number of $E'$ is larger than the one of $E$. The second set is defined as
\begin{equation}\label{s2_eq}
  S_2 = \Bigl \{ \text{all backward places} \Bigr\}.
\end{equation}
So this set $S_2$ contains the places (outside $p$) where $E$ has non-split multiplicative reduction or split multiplicative reduction for which the Tamagawa number of $E$ is larger than the one of $E'$, and it contains the places above $p$ for which we have $\varphi^*(\omega') \in \mathcal{O}_v^{\times} \omega$. The two sets are disjoint.

\begin{prop}\label{kummerimage_prop}
  For all finite places $v$
  \begin{align*}
    \img(\kappa_v) &= \begin{cases}
                       K_v^{\times}/p & \text{if $v\in S_1$}\\
                       \mathcal{O}_v^{\times}/p & \text{otherwise }\\
                       0 & \text{if $v\in S_2$}  
                     \end{cases} \\
   \img(\kappa'_v) &= \begin{cases}
                      H^1(K_v,\Zp) &\text{if $v\in S_2$} \\
                      H^1(\mathcal{O}_v,\Zp) \cong \Zp & \text{otherwise}\\
                      0 & \text{if $v\in S_1$}
                     \end{cases}
  \end{align*}
\end{prop}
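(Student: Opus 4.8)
The plan is to compute the image of $\kappa_v$ (and dually $\kappa'_v$) case by case, matching the trichotomy in the statement to the cases analysed in the preceding subsections and summarised in the table of section~\ref{summary_subseq} and in the displays~\eqref{coker_phi_at_p_eq}, \eqref{coker_phihat_at_p_eq}. The key observation is that $\kappa_v$ is injective on $E'(K_v)/\varphi(E(K_v))$, since by definition its kernel is the image of the connecting map from $H^0(K_v, E'[\hat\varphi])$ — more precisely, $\kappa_v$ is the inclusion of $E'(K_v)/\varphi(E(K_v))$ into $H^1(K_v, E[\varphi])$ coming from the long exact sequence of $0\to E[\varphi]\to E \xrightarrow{\varphi} E'\to 0$. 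Thus the image is a subgroup of $K_v^\times/p$ of order equal to $\# E'(K_v)/\varphi(E(K_v))$, which is already known. So the entire problem reduces to pinning down \emph{which} subgroup of the correct order we land in, the only genuine ambiguity being whether the image is all of $K_v^\times/p$, the unramified part $\mathcal{O}_v^\times/p$, or zero.

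First I would dispose of the places away from $p$. For $v\nmid p$ the quotient $H^1(\mathcal{O}_v,\mu_p)=\mathcal{O}_v^\times/p$ is precisely the unramified classes, and $K_v^\times/p$ has order $p^2$ when $\mu_p\subset K_v$ and order $p$ otherwise; in the latter case $K_v^\times/p = \mathcal{O}_v^\times/p$ since a uniformiser becomes a $p$-th power up to units exactly when $p\nmid (\#k_v-1)$. In the backward split and backward non-split cases the cokernel is $0$ or $\Zp$ respectively by the table, and one checks the image is $0$ (backward split, where $\varphi_{K_v}$ is surjective so $S_2$ contributes nothing to $\img\kappa_v$... here I must be careful: the split backward case has $E$ with more components, and the summary shows $E'(K_v)/\varphi=0$, giving image $0$). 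For forward split places, $E'(K_v)/\varphi(E(K_v))\cong K_v^\times/p$ has full order $p\cdot\#\mu_p(K_v)$, forcing $\img(\kappa_v)=K_v^\times/p$. For the good/ordinary-away-from-$p$ places one uses that the formal group gives unramified Kummer classes, so the image is $\mathcal{O}_v^\times/p$; the standard fact here is that for good reduction the local Kummer image is exactly the unramified subgroup, which follows from $\E'(\mathcal{O}_v)/\varphi(\E(\mathcal{O}_v))$ injecting into $H^1(\mathcal{O}_v,\mu_p)$ by flat base change.

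The delicate part is the places \emph{above} $p$, where $E[\varphi]$ need not be $\mu_p$ as a group scheme over $\mathcal{O}_v$ and where the order counts in \eqref{coker_phi_at_p_eq} involve the factor $p^{n_v}$. Here I would invoke Lemma~\ref{constant_lem}: at a forward place above $p$ with non-additive reduction $\E[\varphi]=\mu_p$, so the flat cohomology $H^1(\mathcal{O}_v,\mu_p)=\mathcal{O}_v^\times/p$ is the right target, and one compares orders — $\#\mathcal{O}_v^\times/p = p^{n_v}\cdot\#\mu_p(K_v)$ by the structure of local units — against the "otherwise" value $p^{n_v}$ in \eqref{coker_phi_at_p_eq} to conclude the image is exactly $\mathcal{O}_v^\times/p$ (the forward split multiplicative subcase giving instead the full $K_v^\times/p$ of order $p^{n_v+1}$). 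For backward places above $p$, hypothesis~\ref{atp_hyp} forces $\mu_p(K_v)=1$ and the cokernel is trivial, so the image is $0\in S_2$.

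The main obstacle I anticipate is the careful order-bookkeeping over $p$: one must reconcile the scheme-theoretic identification $H^1(\mathcal{O}_v,\mu_p)=\mathcal{O}_v^\times/p$ (flat, not \'etale, cohomology, which is why $\mu_p$ rather than $\Zp$ appears and why the computation is sensitive to ramification) with the analytically computed local cokernel, and to verify that the injection $\kappa_v$ really does land in the unramified part and not in some other subgroup of the same order. This is exactly where the hypothesis $e_v<p-1$ (or the weaker~\ref{atp_hyp}) is used to rule out supersingular behaviour that would make $\E[\varphi]$ fail to be $\mu_p$. The statement for $\kappa'_v$ then follows formally by local Tate duality: the images of $\kappa_v$ and $\kappa'_v$ are exact annihilators of one another under the perfect pairing $H^1(K_v,\mu_p)\times H^1(K_v,\Zp)\to \QZ$, so each line of the dual table is the orthogonal complement of the corresponding line for $\kappa_v$, and one reads off $H^1(K_v,\Zp)$, $H^1(\mathcal{O}_v,\Zp)\cong\Zp$, and $0$ directly.
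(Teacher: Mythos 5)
Your overall strategy---count orders using the table of section~\ref{summary_subseq} and the displays \eqref{coker_phi_at_p_eq}, \eqref{coker_phihat_at_p_eq}, then identify which subgroup of the right order the image is---is the same as the paper's, and your treatment of the places away from $p$ and of the $S_1$, $S_2$ cases is essentially the argument given there (the paper cites Lemma~3.1 of \cite{schst} for the containment of the image in the unramified classes when $v\nmid p$). But there is a genuine gap at the \emph{forward additive} places above $p$. Such places exist (the paper's example 50b4 at $v=5$), they lie in neither $S_1$ nor $S_2$, so the claim there is $\img(\kappa_v)=\mathcal{O}_v^{\times}/p$; yet Lemma~\ref{constant_lem} only gives $\E[\varphi]=\mu_p$ when the reduction is \emph{not} additive, and the paper explicitly warns that at an additive forward place above $p$ the kernel $\E[\varphi]$ need not even be quasi-finite or flat (its special fibre can be $\mathbf{G}_a$). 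Your route---identify $H^1(\mathcal{O}_v,\E[\varphi])$ with $H^1(\mathcal{O}_v,\mu_p)$ and push the integral points through the fppf Kummer sequence for $\varphi$---is therefore unavailable in that case, and you never treat it. Order-counting cannot rescue you: an image of order $p^{n_v}$ inside $K_v^{\times}/p$ of order $p^{n_v+1}$ has many candidate positions, so the containment in $\mathcal{O}_v^{\times}/p$ must actually be proved, a point you flag as ``the main obstacle'' without resolving it.

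The paper's fix is to run the duality step in the opposite direction from yours at these places. Lemma~\ref{constant_lem} gives $\E'[\hat\varphi]=\Zp$ at \emph{every} forward place above $p$, additive or not, so one applies the fppf Kummer sequence for $0\to\E'^0[\hat\varphi]\to\E'^0\to\E^0\to 0$ over $\mathcal{O}_v$ to conclude directly that $\img(\kappa'_v)=H^1(\mathcal{O}_v,\Zp)$; then $\img(\kappa_v)$ is recovered as the exact orthogonal complement of $\img(\kappa'_v)$ under the local pairing $H^1(K_v,\mu_p)\times H^1(K_v,\Zp)\to\Zp$ (Lemma~2.4 of \cite{fisher}), which is $H^1(\mathcal{O}_v,\mu_p)$ by local duality. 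You instead derive the $\kappa'_v$ table from the $\kappa_v$ table by duality, which presupposes the very case you cannot handle. Reversing the direction of the duality argument at the places above $p$ closes the gap; the rest of your proposal is sound.
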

\begin{proof}
  Our computations in~\eqref{coker_phi_at_p_eq} and~\eqref{coker_phihat_at_p_eq} for places above $p$ and our local computations for multiplicative places, summarised in section~\ref{summary_subseq}, show that at least all of the above groups that should be equal have the same size. It remains to prove that $\img(\kappa_v)$ lies in $H^1(\mathcal{O}_v,\mu_p)=\mathcal{O}_v^{\times}/p$ and that $\img(\kappa'_v)$ lies in $H^1(\mathcal{O}_v,\Zp)$  if $v\not\in S_1\cup S_2$, i.e. if the reduction is neither forward split multiplicative nor backward.

  The image of $\kappa_v$ is equal to the image from $\E'^0(\mathcal{O}_v)/\varphi(\E^0(\mathcal{O}_v))$ if the reduction is not split multiplicative as the Tamagawa numbers are coprime to $p$. For places $v\nmid p$,  we can invoke Lemma~3.1 in~\cite{schst}, which shows that the image of $\kappa_v$ lies in the unramified part of the local cohomology, i.e. in $H^1(\mathcal{O}_v,\mathcal{E}[\varphi]) = H^1(\mathcal{O}_v,\mu_p)$. Similar for $\kappa'_v$.

  So we are reduced to the case of good or additive reduction at a place above $p$ not in $S_1$ or $S_2$. Otherwise, we can use Lemma~\ref{constant_lem} as follows. We have the corresponding Kummer map for the exact sequence
  \begin{equation*}
   \xymatrix@1{0\ar[r]& \E'^0[\hat\varphi]\ar[r] & \E'^0 \ar[r] & \E^0\ar[r] & 0 }
  \end{equation*}
  of sheaves on the fppf site, giving us
  \begin{equation*}
   \xymatrix@C-2ex{
   E(K_v)/\hat\varphi(E'(K_v)) \ar@{=}[r] & \E'^0(\mathcal{O}_v)/\hat\varphi(\E^0(\mathcal{O}_v)) \ar[d]\\
   &  H^1(\mathcal{O}_v, \E'^0[\hat\varphi]) \ar@{=}[r] & H^1(\mathcal{O}_v,\E'[\hat\varphi]) \ar@{=}[r] & H^1(\mathcal{O}_v,\Zp). }
  \end{equation*}
  This shows the equality for $\img(\kappa'_v)$. By Lemma~2.4 in~\cite{fisher}, $\img(\kappa_v)$ is the exact orthogonal to $\img(\kappa'_v)$ under the non-degenerate pairing $H^1(K_v,\mu_p)\cup H^1(K_v,\Zp) \to \Zp$. Hence, by local duality $\img(\kappa_v)$ is equal to $H^1(\mathcal{O}_v,\mu_p)$.
\end{proof}

\section{Descent via $p$-isogeny}\label{descent_sec}
There are many good and very readable accounts on descent on elliptic curves, see for instance~\cite{schst, dss, tim}. We are here in the special case of an isogeny with kernel $\mu_p$ and wish to rely fully on this assumption. However, we are only interested in the $\varphi$-Selmer group rather than the $p$-Selmer group and hence we won't perform a full descent. Note that for $p=5$ and $p=7$, Fisher's thesis~\cite{fi} contains much more for the case $K=\QQ$. In our approach, we follow and generalise~\cite{ms} for computing the Selmer groups $\Selphi$ and $\Selphihat$. However, we will reformulate their results in flat cohomology for $\mu_p$ and $\Zp$.

By local duality, see Lemma~III.1.1 and Theorem~III.1.3 in~\cite{milne_adt}, we have the short exact sequence
\begin{equation*}
 \xymatrix{0\ar[r] & H^1(\mathcal{O}_v,\mu_p) \ar[r] \ar@{=}[d] & H^1(K_v,\mu_p)\ar[r]\ar@{=}[d] &  H^1(\mathcal{O}_v,\Zp)^{\vee}\ar[r] \ar[d]^{\cong} & 0 \\
0\ar[r] & \mathcal{O}_v^{\times}/p \ar[r] & K_v^{\times}/p\ar[r]^{v} & \Zp\ar[r] & 0}
\end{equation*}
where $A^{\vee} = \homr(A,{}^{\QQ}\!/\!{}_{\ZZ})$ denotes the Pontryagin dual of $A$. The middle term is also dual to $H^1(K_v,\Zp)$.

For any open $U$ in $\Spec(\OK)$, we have an explicit description of the flat cohomology of $\mu_p$ and $\Zp$. On the one hand, we have
\begin{align*}
  H^1(U, \mu_p)  &=  \ker\Bigl( H^1 (K, \mu_p) \to \bigoplus_{v \in U} H^1(\mathcal{O}_v, \Zp)^{\vee} \Bigr ) \\
  &= \bigl\{ x \in K^{\times}/p \ \bigl\vert \ v(x) \equiv 0 \pmod{p}\  \forall \ v \in U \ \bigr\},
\end{align*}
saying that the local conditions at places in $U$ is that the image should lie in the subgroup $H^1(\mathcal{O}_v,\mu_p)$.
Furthermore this fits into the exact sequence
\begin{equation*}
  \xymatrix{ 0 \ar[r] &\mathcal{O}_{K,S}^{\times} / p \ar[r] & H^1(U,\mu_p) \ar[r] & \Cl(\mathcal{O}_{K,S})[p] \ar[r] & 0 ,}
\end{equation*}
where $S$ is the complement of $U$.

On the other hand, 
\begin{align*}
 H^1(U,\Zp) &= \ker\Bigl( H^1 (K, \Zp) \to \bigoplus_{v \in U} H^1(\mathcal{O}_v, \mu_p)^{\vee} \Bigr ).
\end{align*}
This group classifies cyclic extensions of $K$ of degree $p$ which are unramified at all places in $U$. So by global class field theory, we have
$H^1(\OK, \Zp) \cong \homr(\Cl(K), \Zp)$.

The Selmer group $\Selphi$ is defined to be the subgroup of $H^1(K, E[\varphi])$ whose restriction to $H^1(K_v,E[\varphi])$ lie in the image of the local Kummer map 
\begin{equation*}   
  \kappa_v \colon E'(K_v)/\varphi(E(K_v)) \to H^1(K_v, E[\varphi]).\end{equation*}
 Since $E[\varphi] = \mu_p$ over $K$ and over $\mathcal{O}_v$ at all places $v$ outside $p$, it is natural to compare this Selmer group to the corresponding ``Selmer group'' $H^1(\OK,\mu_p)$ for $\mu_p$. In other words, we will compare the images of the first two terms of the following two exact sequences.
\begin{equation*}
  \xymatrix@R-1ex{
   0 \ar[r] & E'(K_v)/\varphi(E(K_v)) \ar^{\kappa_v}[r] & H^1(K_v, E[\varphi]) \ar[r] \ar@{=}[d] & H^1(K_v,E)[\varphi] \ar[r] &  0 \\
   0 \ar[r] & H^1(\mathcal{O}_v,\mu_p) \ar^{\lambda_v}[r] 
            & H^1(K_v, \mu_p) \ar[r] 
            & H^1(\mathcal{O}_v,\Zp)^{\vee} \ar[r] 
            &  0 
  }
\end{equation*}
The image of $H^1(\mathcal{O}_v,\mu_p)$ can also be described as the unramified cocycles in $H^1(K_v,\mu_p)$.

Recall that we have defined in~\eqref{s1_eq} and~\eqref{s2_eq} two disjoint finite sets of places $S_1$ and $S_2$.
 For $i=1$ or $2$, let $U_i$ be the complement of $S_i$ in $\Spec(\OK)$.
 
\begin{thm}\label{pdescent_thm}
   Let $E/K$ be an elliptic curve satisfying the hypotheses~\ref{odd_asp} -- \ref{atp_hyp} in the introduction. 
   Then we have exact sequences
   \begin{equation*}
       \xymatrix@1{0\ar[r] & \Selphi \ar[r] & H^1(U_1,\mu_p) \ar[r] & \bigoplus_{v \in S_2} H^1(\mathcal{O}_v , \mu_p)}
   \end{equation*}
   and
   \begin{equation*}
       \xymatrix@1{0\ar[r] & \Selphihat \ar[r] & H^1(U_2,\Zp) \ar[r] & \bigoplus_{v \in S_1} H^1(\mathcal{O}_v , \Zp).}
   \end{equation*}
 \end{thm}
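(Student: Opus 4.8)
The plan is to read off the local Selmer conditions from Proposition~\ref{kummerimage_prop} and match them against the explicit valuation description of the flat cohomology of $U_1$ and $U_2$ given above. Since the kernel of $\varphi$ is $\mu_p$ over $K$, we have $H^1(K,E[\varphi]) = H^1(K,\mu_p) = K^{\times}/p$, and by definition $\Selphi$ is the set of $x \in K^{\times}/p$ whose restriction $\mathrm{res}_v(x)$ lies in $\img(\kappa_v)$ for every finite place $v$. Proposition~\ref{kummerimage_prop} tells us this amounts to three types of local condition: no condition at $v \in S_1$; the vanishing condition $\mathrm{res}_v(x) = 0$ at $v \in S_2$; and the integrality condition $\mathrm{res}_v(x) \in \mathcal{O}_v^{\times}/p$, equivalently $v(x) \equiv 0 \pmod p$, at every other $v$.

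First I would observe that the integrality conditions at all $v \notin S_1$ are exactly the defining conditions of $H^1(U_1,\mu_p)$ recorded above; here one uses that the vanishing condition at $v \in S_2$ is a fortiori an integrality condition, since $0$ lies in $\mathcal{O}_v^{\times}/p$ and $S_2 \subseteq U_1$ as the two sets are disjoint. This gives the inclusion $\Selphi \subseteq H^1(U_1,\mu_p)$, hence the injection $0 \to \Selphi \to H^1(U_1,\mu_p)$. Within $H^1(U_1,\mu_p)$ the only remaining Selmer condition is the vanishing of $\mathrm{res}_v(x)$ for $v \in S_2$. For such $x$ the class $\mathrm{res}_v(x)$ already lands in the subgroup $H^1(\mathcal{O}_v,\mu_p) = \mathcal{O}_v^{\times}/p$, and since this injects into $H^1(K_v,\mu_p) = K_v^{\times}/p$, vanishing in $K_v^{\times}/p$ is the same as vanishing in $H^1(\mathcal{O}_v,\mu_p)$. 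Thus $\Selphi$ is precisely the kernel of the restriction map $H^1(U_1,\mu_p) \to \bigoplus_{v \in S_2} H^1(\mathcal{O}_v,\mu_p)$, which is the first exact sequence.

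The second sequence I would prove by the identical argument with the roles of $\mu_p$ and $\Zp$, and of $S_1$ and $S_2$, interchanged. Here $E'[\hat\varphi] = \Zp$ over $K$, so $\Selphihat$ sits inside $H^1(K,\Zp)$, and the second half of Proposition~\ref{kummerimage_prop} gives the local conditions: no condition at $v \in S_2$, vanishing at $v \in S_1$, and the unramified condition $\mathrm{res}_v(y) \in H^1(\mathcal{O}_v,\Zp)$ elsewhere. The unramified conditions at all $v \notin S_2$ cut out $H^1(U_2,\Zp)$, the classes unramified outside $S_2$, and within it the vanishing conditions at $S_1$ (which, as above, may be tested in $H^1(\mathcal{O}_v,\Zp)$ since $S_1 \subseteq U_2$) identify $\Selphihat$ with the kernel of the restriction to $\bigoplus_{v \in S_1} H^1(\mathcal{O}_v,\Zp)$.

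Given Proposition~\ref{kummerimage_prop} the argument is essentially bookkeeping, and the point requiring the most care is the uniform treatment of the places above $p$. There the group scheme $\E[\varphi]$ need not equal $\mu_p$ (it can degenerate to $\alpha_p$ or $\mathbf{G}_a$ in the special fibre), yet the Selmer condition is defined through the generic fibre $E[\varphi] = \mu_p$, so the identification $H^1(\mathcal{O}_v,\mu_p) = \mathcal{O}_v^{\times}/p$ in flat cohomology and the valuation description of $H^1(U_1,\mu_p)$ remain valid at $p$. The main subtlety is therefore simply to confirm that the vanishing and integrality conditions extracted from Proposition~\ref{kummerimage_prop} are compatible with these flat-cohomological descriptions at $p$; no genuinely new difficulty arises, and in particular I do not need surjectivity onto the right-hand sum, only the identification of each Selmer group as the relevant kernel.
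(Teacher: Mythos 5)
Your proposal is correct and follows essentially the same route as the paper: the published proof likewise reduces everything to the comparison of local conditions in Proposition~\ref{kummerimage_prop} (supplemented by a citation of Proposition~3.2 of the Schaefer--Stoll reference for the good/additive places, which your direct appeal to Proposition~\ref{kummerimage_prop} already covers) and then performs the same bookkeeping, relaxing the condition at $S_1$ and tightening it at $S_2$, with the roles of $S_1$ and $S_2$ interchanged for the dual isogeny. Your explicit remarks that the vanishing condition at $v\in S_2$ may be tested inside $H^1(\mathcal{O}_v,\mu_p)$ because $S_2\subseteq U_1$, and that only the kernel identification (not surjectivity) is needed, are correct and merely make explicit what the paper leaves implicit.
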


\begin{proof}
 Using Proposition~3.2 in~\cite{schst}, we immediately see that the local conditions for an element in $H^1(K,\mu_p)$ to be in $\Selphi$ are equal to the local conditions to be in $H^1(\OK,\mu_p)$ for all places where the reduction is not split multiplicative. Proposition~\ref{kummerimage_prop} tells us how to compare the images in the cases when $v$ belongs to either $S_1$ or $S_2$ or to none. We have to relax the condition at places in $S_1$ and we have to impose more stringent conditions at places in $S_2$. The same argument applies to $\Selphihat$ with the roles of $S_1$ and $S_2$ interchanged. 
\end{proof}

\subsection{Global duality}

\begin{thm}\label{global_duality_thm}
  Let $E/K$ be an elliptic curve satisfying the hypotheses~\ref{odd_asp} -- \ref{atp_hyp} in the introduction. Then we have the following exact sequence
  \begin{equation}\label{global_duality_eq}
    \xymatrix@C-2ex{ 0\ar[r] & \Selphi \ar[r] & H^1(U_1,\mu_p)\ar[r] & \bigoplus_{v \in S_2} H^1(\mathcal{O}_v, \mu_p) \ar `r[d] `[ll] `l[dlll] `d[dll] [dll] \\
                       & \Selphihat^{\vee} \ar[r] & \Cl\bigl(\mathcal{O}_{K,S_1}\bigr)/p \ar[r] & 0 }
  \end{equation}
  with the notation of Theorem~\ref{pdescent_thm}.
\end{thm}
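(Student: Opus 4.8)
The plan is to derive this sequence from Poitou--Tate global duality in flat cohomology for the Cartier-dual pair $(\mu_p,\Zp)$, with $\Selphihat$ playing the role of the dual Selmer group. The essential input is that local Tate duality
$$
\langle\,\cdot\,,\cdot\,\rangle_v\colon H^1(K_v,\mu_p)\times H^1(K_v,\Zp)\longrightarrow \QZ
$$
puts the local conditions $\img(\kappa_v)$ and $\img(\kappa'_v)$ in exact orthogonal duality at every place $v$; this is exactly what was recorded at the end of the proof of Proposition~\ref{kummerimage_prop} (via Lemma~2.4 in~\cite{fisher} and local duality). Thus $\Selphi$ and $\Selphihat$ are the Selmer groups of two systems of local conditions that annihilate one another, which is the hypothesis under which global duality applies. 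First I would invoke the flat duality theorems of~\cite{milne_adt} to obtain the nine-term Poitou--Tate sequence for $\mu_p$ over $U_1=\Spec(\OK)\setminus S_1$; its relevant segment is
$$
H^1(U_1,\mu_p)\longrightarrow\bigoplus_{v\in S_1}H^1(K_v,\mu_p)\longrightarrow H^1(U_1,\Zp)^{\vee}\longrightarrow H^2(U_1,\mu_p)\longrightarrow\bigoplus_{v\in S_1}H^2(K_v,\mu_p).
$$
The sequence of the theorem will be obtained from this by imposing the strict local conditions of $\Selphi$ at the interior places of $S_2$ and, dually, relaxing the conditions for $\Zp$ there.

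The heart of the matter is the connecting map $\bigoplus_{v\in S_2}H^1(\mathcal{O}_v,\mu_p)\to\Selphihat^{\vee}$. By the explicit form of the Poitou--Tate boundary map it sends a family $(x_v)_{v\in S_2}$ to the linear functional $y\mapsto\sum_{v\in S_2}\langle x_v,\operatorname{res}_v y\rangle_v$ on $\Selphihat$. Here I would use Proposition~\ref{kummerimage_prop} and Theorem~\ref{pdescent_thm}: every class in $\Selphihat$ restricts to $0$ at the places of $S_1$ and is unramified outside $S_1\cup S_2$, so the components at $S_1$ contribute nothing and the functional depends only on the $S_2$-components; this also explains why the middle term of the theorem is $H^1(\mathcal{O}_v,\mu_p)$ rather than the full local group, since a class coming from $H^1(U_1,\mu_p)$ is unramified at $S_2$. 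Exactness at $\bigoplus_{v\in S_2}H^1(\mathcal{O}_v,\mu_p)$ -- that the kernel of this map equals the image of $H^1(U_1,\mu_p)$ -- is then precisely the reciprocity statement that a family of local classes is orthogonal to the entire dual Selmer group if and only if it is the restriction of a global class, which is the substance of global duality and replaces the naive Hasse principle.

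It remains to identify the cokernel. The flat Kummer sequence $\mu_p\to\mathbf{G}_m\xrightarrow{\,p\,}\mathbf{G}_m$ on $U_1$ yields
$$
0\longrightarrow\Cl(\mathcal{O}_{K,S_1})/p\longrightarrow H^2(U_1,\mu_p)\longrightarrow\Br(U_1)[p]\longrightarrow0 ,
$$
using $\pic(U_1)=\Cl(\mathcal{O}_{K,S_1})$. Since $\Br(\mathcal{O}_{K,S_1})[p]$ injects into $\bigoplus_{v\in S_1}\Br(K_v)[p]=\bigoplus_{v\in S_1}H^2(K_v,\mu_p)$ (the Hasse principle for the Brauer group; here $p$ odd guarantees that the archimedean places do not interfere), whereas the classes coming from $\pic(U_1)/p$ have trivial local invariants, the kernel of $H^2(U_1,\mu_p)\to\bigoplus_{v\in S_1}H^2(K_v,\mu_p)$ is exactly $\Cl(\mathcal{O}_{K,S_1})/p$. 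By exactness of the nine-term sequence this kernel is the image of $H^1(U_1,\Zp)^{\vee}\to H^2(U_1,\mu_p)$; carrying this identification through the $S_2$-modification produces the final surjection $\Selphihat^{\vee}\to\Cl(\mathcal{O}_{K,S_1})/p$, whose kernel is the image of the connecting map of the previous paragraph.

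The step I expect to be the main obstacle is setting up the Poitou--Tate formalism in \emph{flat} cohomology at the places above $p$, where $\mu_p$ is genuinely non-\'etale and one cannot pass to a Galois group of restricted ramification; it is the flat duality of~\cite{milne_adt}, and not classical Galois-cohomological Poitou--Tate, that is needed there, and Hypotheses~\ref{atp_hyp} and~\ref{comp3_asp} are what keep the local terms at the ramified and additive places under control. A secondary, purely bookkeeping, difficulty is that the ambient scheme is $U_1$ while the strict conditions sit at the interior places $S_2$: getting the class group to come out relative to $S_1$ and not to $S_1\cup S_2$ is exactly what forces one to treat the $S_2$-conditions as a modification of the $U_1$-duality sequence rather than to work over $U_1\cap U_2$ throughout.
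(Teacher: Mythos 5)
Your proposal is correct and follows essentially the same route as the paper: both rest on Milne's flat global duality, the local computations of Proposition~\ref{kummerimage_prop} and Theorem~\ref{pdescent_thm}, and the identification of the final term via the Kummer sequence for $H^2(U_1,\mu_p)$ together with the injectivity of the Brauer-group localisation. The only difference is organisational: the paper derives the $S_2$-modified sequence by applying the basic duality statement over $U_{12}=U_1\cap U_2$ and then transferring to $U_1$ by an explicit diagram chase, whereas you assert the modified (``two Selmer structures'') form of Poitou--Tate over $U_1$ directly --- fleshing that assertion out from~\cite{milne_adt} would lead to essentially the same chase.
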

This is of course a version of the exact sequence found by Cassels, Poitou and Tate.
\begin{proof}
 Let $U_{12}$ be the intersection of $U_1$ and $U_2$.
 Global duality in flat cohomology (see Corollary~III.3.2 and Proposition~III.0.4.c in~\cite{milne_adt}) gives that the image $A$ of $H^1(U_{12},\mu_p)$  under the localisation maps in  $\oplus_{v \in S_1\cup S_2} H^1(K_v,\mu_p)$ and the image $B$ of $H^1(U_{12},\Zp)$ in $\oplus_{v \in S_1\cup S_2} H^1(K_v,\mu_p)$ fit into the short exact sequence
 \begin{equation*}
   \xymatrix@1{0 \ar[r] & A \ar[r] & \bigoplus_{v \in S_1 \cup S_2} H^1(K_v,\mu_p) \ar[r] & B^{\vee}\ar[r] & 0.}
 \end{equation*}
 If we wish to remove the places in $S_1$ from the middle term, and replace $A$ by the image $A'$ of $H^1(U_{12},\mu_p)$ in $ \oplus_{v \in S_2} H^1(K_v,\mu_p)$, then we have to replace $B^\vee$ by $B'^{\vee}$, where $B'$ is the subgroup of $B$ of elements that are trivial in $\oplus_{v \in S_1} H^1(K_v,\Zp)$. Now we have
 \begin{equation*}
   \xymatrix@1{0 \ar[r] & A' \ar[r] & \bigoplus_{v \in S_2} H^1(K_v,\mu_p) \ar[r] & B'^{\vee}\ar[r] & 0.}
 \end{equation*}
 From Theorem~\ref{pdescent_thm}, we deduce the following two exact sequences
 \begin{equation*}
   \xymatrix@R-3ex{ 0 \ar[r] & \Selphi \ar[r] & H^1(U_{12},\mu_p)\ar[r] & \bigoplus_{v \in S_2} H^1(K_v, \mu_p), \\
              0 \ar[r] & \Selphihat \ar[r] & H^1(U_{12},\Zp) \ar[r] & \bigoplus_{v\in S_1} H^1(K_v,\Zp). }
 \end{equation*}
 This proves that $A'$ is the image of the right hand map in the top exact sequence. Also, it shows that $B'$ is the image of $\Selphihat$ in $\oplus_{v\in S_2} H^1(K_v,\Zp)$. Gathering the information  found so far, we get that
   \begin{equation*}
    \xymatrix{ 0\ar[r] & \Selphi \ar[r] & H^1(U_{12},\mu_p)\ar[r] & \bigoplus_{v \in S_2} H^1(K_v, \mu_p) \ar[r] & \Selphihat^{\vee} }
  \end{equation*}
  is exact.
  We compare now this sequence with the one we are aiming to prove:
  \begin{equation*}
    \xymatrix@C-1ex{0\ar[r] & \Selphi\ar[r] & H^1(U_{12},\mu_p)\ar[r] & \bigoplus_{v\in S_2} H^1(K_v,\mu_p) \ar[r] & \Selphihat^{\vee} \\
             0\ar[r] & \Selphi\ar[r] \ar@{=}[u] & H^1(U_1,\mu_p)\ar^(0.4){\delta}[r] \ar[u]^{\alpha} & \bigoplus_{v\in S_2} H^1(\mathcal{O}_v,\mu_p) \ar[r]\ar[u]^{\beta} &  \coker\delta \ar[u]^{\gamma}\ar[r] & 0. }
  \end{equation*}
  The injectivity of $\alpha$ and $\beta$ gives an exact sequence
  \begin{equation*}
    \xymatrix@1{0\ar[r] & \ker\gamma \ar[r] & \coker \alpha\ar[r] & \coker\beta. }
  \end{equation*}
  By definition, the cokernel of $\alpha$ is a subgroup of the group $\oplus_{v\in S_2} H^1(\mathcal{O}_v,\Zp)^{\vee}$ which is equal to the cokernel of $\beta$. Hence $\ker\gamma=0$ and we shown the exactness of the first four non-zero terms of the exact sequence~\eqref{global_duality_eq}. It remains to find the kernel of the map
  \begin{equation}\label{kerselphihat_eq}
    \xymatrix@1{\Selphihat \ar[r] &\bigoplus_{v \in S_2} H^1(\mathcal{O}_v,\mu_p)^{\vee}.}
  \end{equation}
  This is given by elements of $H^1(U_{12},\Zp)$ that map to zero in $H^1(K_v,\Zp)$ for $v\in S_1$  and to elements in $H^1(\mathcal{O}_v,\Zp)$ for $v \in S_2$. In other words it is equal to the kernel of
  \begin{equation*}
    \xymatrix@1{H^1(U_1,\Zp) \ar[r] & \bigoplus_{v\in S_1} H^1(K_v,\Zp)}
  \end{equation*}
  which, again by global duality, is dual to the kernel of $H^2(U_1,\mu_p)\to \oplus_{v\in S_1} H^2(K_v,\mu_p)$. The latter is simply the Brauer group $\Br(K_v)[p]$, while the first sits in an exact sequence
  \begin{equation*}
    \xymatrix@1{0\ar[r] & \pic(U_1)/p\ar[r] & H^2(U_1,\mu_p)\ar[r] & \Br(U_1)[p] \ar[r] & 0.}
  \end{equation*}
  By global class field theory, the localisation map on the Brauer groups is injective and so we find that the kernel of~\eqref{kerselphihat_eq} is dual to $\pic(U_1)/p=\Cl(\mathcal{O}_{K,S_1})/p$.
\end{proof}

\begin{cor}\label{global_duality_cor1}
  With the same assumption as in Theorem~\ref{global_duality_thm}, we have
  \begin{equation*}
    \dim_{\mathbb{F}_p} \Selphi - \dim_{\mathbb{F}_p} \Selphihat = \# S_1   +\#\{v \mid \infty\} -\sum_{ v\in S_2} n_v   + \dim_{\mathbb{F}_p}(\mu_p(K)) - 1 
  \end{equation*}
  where $n_v = [K_v : \QQ_p]$ if $v\mid p$ and $n_v = 1$ otherwise.
\end{cor}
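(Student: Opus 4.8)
The plan is to compute the alternating dimension by extracting an Euler characteristic from the exact sequence \eqref{global_duality_eq} of Theorem~\ref{global_duality_thm}. Taking $\dim_{\FF_p}$ of each term in that six-term exact sequence and using that the alternating sum of dimensions in an exact sequence vanishes, I would obtain
\begin{equation*}
  \dim \Selphi - \dim H^1(U_1,\mu_p) + \sum_{v\in S_2}\dim H^1(\mathcal{O}_v,\mu_p) - \dim \Selphihat + \dim \bigl(\Cl(\mathcal{O}_{K,S_1})/p\bigr) = 0,
\end{equation*}
noting that $\dim \Selphihat^{\vee} = \dim \Selphihat$ since Pontryagin duality preserves $\FF_p$-dimension. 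Solving for $\dim\Selphi - \dim\Selphihat$ then reduces the problem to evaluating the three remaining global/local dimensions.

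First I would compute the local terms $\dim_{\FF_p} H^1(\mathcal{O}_v,\mu_p)$ for $v\in S_2$. Since $H^1(\mathcal{O}_v,\mu_p) = \mathcal{O}_v^{\times}/p$, its dimension is $n_v + \dim_{\FF_p}\mu_p(K_v)$ by the structure of local units (the rank of $\mathcal{O}_v^{\times}/p$ picks up $n_v = [K_v:\QQ_p]$ when $v\mid p$, and the torsion contribution $\mu_p(K_v)$). For $v\in S_2$ away from $p$ this is just $\dim\mu_p(K_v)$; but $S_2$ consists of backward places, and at backward places the hypothesis~\ref{atp_hyp} and the congruence conditions in the summary table~\ref{summary_subseq} force $\mu_p(K_v)=1$, so $\dim H^1(\mathcal{O}_v,\mu_p) = n_v$ with $n_v=1$ for $v\nmid p$. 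Hence $\sum_{v\in S_2}\dim H^1(\mathcal{O}_v,\mu_p) = \sum_{v\in S_2} n_v$.

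Next I would compute the two remaining global dimensions using the two exact sequences for $H^1(U_1,\mu_p)$ and $\pic(U_1)/p=\Cl(\mathcal{O}_{K,S_1})/p$ recorded in the excerpt. From $0\to\mathcal{O}_{K,S_1}^{\times}/p\to H^1(U_1,\mu_p)\to \Cl(\mathcal{O}_{K,S_1})[p]\to 0$ I get $\dim H^1(U_1,\mu_p) = \dim(\mathcal{O}_{K,S_1}^{\times}/p) + \dim\Cl(\mathcal{O}_{K,S_1})[p]$. The key observation is that $\dim\Cl(\mathcal{O}_{K,S_1})[p] = \dim\Cl(\mathcal{O}_{K,S_1})/p$ for a finite abelian group, so the two class-group contributions cancel in the alternating sum, leaving only the unit term $\dim(\mathcal{O}_{K,S_1}^{\times}/p)$. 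By Dirichlet's unit theorem the $S_1$-unit group has rank $r_1 + r_2 - 1 + \#S_1$ where $r_1+r_2 = \#\{v\mid\infty\}$, plus a torsion contribution $\dim\mu_p(K)$; thus $\dim(\mathcal{O}_{K,S_1}^{\times}/p) = \#\{v\mid\infty\} - 1 + \#S_1 + \dim\mu_p(K)$.

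Assembling these pieces gives exactly
\begin{equation*}
  \dim\Selphi - \dim\Selphihat = \#S_1 + \#\{v\mid\infty\} - \sum_{v\in S_2} n_v + \dim\mu_p(K) - 1.
\end{equation*}
The main obstacle I anticipate is bookkeeping rather than depth: I must verify that the class-group terms genuinely cancel (requiring the $[p]$-torsion and mod-$p$ dimensions of the same finite group to agree) and that the $S_2$-local computation correctly drops the $\mu_p(K_v)$ contributions via~\ref{atp_hyp}. I would double-check the Dirichlet rank count, since an off-by-one in the $-1$ or a miscount of whether archimedean places contribute $r_1+r_2$ (rather than $r_1+2r_2$, which would be wrong over $\FF_p$ since complex places give no extra $p$-torsion for odd $p$) is the likeliest place to introduce an error.
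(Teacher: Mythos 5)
Your overall route coincides with the paper's: take the alternating sum of $\FF_p$-dimensions along the five-term exact sequence of Theorem~\ref{global_duality_thm}, cancel $\dim\Cl(\mathcal{O}_{K,S_1})[p]$ against $\dim\Cl(\mathcal{O}_{K,S_1})/p$ (legitimate, since the class group is finite), and evaluate $\dim_{\FF_p}\mathcal{O}_{K,S_1}^{\times}/p$ by Dirichlet's unit theorem. Those steps, including the count $\#S_1+\#\{v\mid\infty\}+\dim_{\FF_p}\mu_p(K)-1$, are correct and are exactly what the paper does.

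The local computation at $S_2$ is where your argument fails, and it fails by being internally inconsistent rather than by an arithmetic slip. You correctly record that for $v\nmid p$ one has $\dim_{\FF_p}\mathcal{O}_v^{\times}/p=\dim_{\FF_p}\mu_p(K_v)$; you then assert that at every backward place $\mu_p(K_v)=1$ and conclude that the dimension is $n_v=1$. These two claims contradict each other: if $\mu_p(K_v)=1$ and $v\nmid p$, your own formula gives dimension $0$, not $1$. The situation at the places away from $p$ is in fact the opposite of what you wrote: at a backward \emph{split} place $v\nmid p$ the table in section~\ref{summary_subseq} gives $\#k_v\equiv 1\pmod{p}$, so $\mu_p\subset\mathcal{O}_v$ and $\dim\mathcal{O}_v^{\times}/p=1=n_v$ precisely \emph{because} $\mu_p(K_v)\neq 1$. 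Hypothesis~\ref{atp_hyp} imposes $\mu_p(K_v)=1$ only at backward places \emph{above} $p$, and there the dimension $n_v=[K_v:\QQ_p]$ comes from the free part of the local units, not from torsion. One further caveat you should be aware of: at a backward \emph{non-split} place one has $\#k_v\equiv -1\pmod{p}$, so $\mathcal{O}_v^{\times}/p$ is trivial and contributes $0$ rather than $n_v=1$ to the sum; both your write-up and the paper's own proof (which justifies $\mu_p\subset\mathcal{O}_v$ by citing only the backward split case of section~\ref{S2_subsec}) silently treat every $S_2$-place away from $p$ as being of split type.
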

\begin{proof}
  Note first that $\# H^1(U_1,\mu_p) = \#\Cl(U_1)[p] \cdot \# \mathcal{O}_{K,S_1}^{\times}/p$. Then the local term $H^1(\mathcal{O}_v,\mu_p) = \mathcal{O}_v^{\times}/p$ is of order $p$ for all places in $S_2$ outside $p$ because $\mathcal{O}_v$ contains $\mu_p$ as we discovered in section~\ref{S2_subsec}. For all places in $S_2$ above $p$, we have $\# \mathcal{O}_v^{\times}/p = p^{n_v}$  as by assumption~\ref{atp_hyp} the $p$-th roots of unity are not contained in $K_v$.
  Finally the dimension of $\mathcal{O}_{K,S_1}^{\times}/p$ is $\# S_1 +\#\{v\mid \infty\} +\dim_{\mathbb{F}_p}(\mu_p(K)) - 1$ by Dirichlet's units Theorem.
\end{proof}

Analogous to Theorem~\ref{global_duality_thm}, we could show that
  \begin{equation*}
    \xymatrix@C-2ex{ 0\ar[r] & \Selphihat \ar[r] & H^1(U_2,\Zp)\ar[r] & \bigoplus_{v \in S_1} H^1(\mathcal{O}_v, \Zp) \ar `r[d] `[ll] `l[dlll] `d[dll] [dll] \\
                       & \Selphi^{\vee} \ar[r] & H^2(U_2,\Zp)\ar[r] & 0. & } 
  \end{equation*}

\begin{rmq}
One can deduce from these computations some upper and lower bounds on the size of the groups involved. For example, with the same assumption as in Theorem~\ref{global_duality_thm}, we have, using the notations from the previous Corollary, 
  $$
\dim_{\FF_p} (\Cl\bigl(\mathcal{O}_{K,S_1}\bigr)/p)  \leq \dim_{\FF_p}\Selphihat \leq \dim_{\FF_p} (\Cl\bigl(\mathcal{O}_{K,S_1}\bigr)/p) + \sum_{v \in S_2} n_v
  $$
and a similar inequality for $\Selphi$ can be deduced via Corollary~\ref{global_duality_cor1}.
\end{rmq}

 A particular case, in which we can say more is when we have $S_2 = \emptyset$. This is the case, for example, when $p$ does not divide the product $\prod c_v$ of Tamagawa numbers of $E$ and $E$ has semistable reduction at all places above $p$.
 Write $\Sel^p(E/K)$ for the $p$-Selmer group in $H^1(K,E[p])$.

 \begin{cor}\label{global_duality_cor2}
   If $S_2=\emptyset$, then we have
   $$
   \Selphi = H^1(U_1,\mu_p)\quad\text{and}\quad \Selphihat =(\Cl\bigl(\mathcal{O}_{K,S_1}\bigr)/p)^{\vee}
   $$
   If in addition $S_1\neq\emptyset$ and $\dim_{\FF_p} (\Cl\bigl(\mathcal{O}_{K,S_1}\bigr)/p) \leq 1$, then
   $$
   \dim_{\FF_p}\Sel^p(E/K)= 2\cdot\dim_{\FF_p}(\Cl(\mathcal{O}_{K,S_1})/p) + \#S_1 +\#\{v \mid \infty\} - 2
   $$
\end{cor}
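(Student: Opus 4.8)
The first assertion needs nothing beyond the two theorems just proved. Setting $S_2=\emptyset$ kills the right-hand term of the first sequence in Theorem~\ref{pdescent_thm}, so the localisation map $\Selphi\to H^1(U_1,\mu_p)$ is an isomorphism and $\Selphi=H^1(U_1,\mu_p)$. Putting $S_2=\emptyset$ into the Cassels--Poitou--Tate sequence~\eqref{global_duality_eq} removes its middle term $\bigoplus_{v\in S_2}H^1(\mathcal{O}_v,\mu_p)$, so the sequence collapses to an isomorphism $\Selphihat^{\vee}\cong\Cl(\mathcal{O}_{K,S_1})/p$; dualising these finite groups gives $\Selphihat=(\Cl(\mathcal{O}_{K,S_1})/p)^{\vee}$.

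For the dimension formula I would first record, writing $d=\dim_{\FF_p}(\Cl(\mathcal{O}_{K,S_1})/p)$, that the first assertion gives $\dim_{\FF_p}\Selphihat=d$, while the Kummer sequence $0\to\mathcal{O}_{K,S_1}^{\times}/p\to H^1(U_1,\mu_p)\to\Cl(\mathcal{O}_{K,S_1})[p]\to 0$ together with Dirichlet's unit theorem (exactly as in the proof of Corollary~\ref{global_duality_cor1}) yields $\dim_{\FF_p}\Selphi=\#S_1+\#\{v\mid\infty\}+\dim_{\FF_p}\mu_p(K)-1+d$. The plan is then to compare $\Sel^p(E/K)$ with these two groups through the module sequence $0\to E[\varphi]\to E[p]\xrightarrow{\varphi}E'[\hat\varphi]\to 0$, whose associated descent sequence
$$0\to\frac{E'(K)[\hat\varphi]}{\varphi(E(K)[p])}\to\Selphi\xrightarrow{f}\Sel^p(E/K)\xrightarrow{g}\Selphihat$$
is exact. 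Counting dimensions, and using $E'(K)[\hat\varphi]=\langle P\rangle$ by~\ref{genmup_asp} and $E(K)[\varphi]=\mu_p(K)$ to get $\dim_{\FF_p}(E'(K)[\hat\varphi]/\varphi(E(K)[p]))=1+\dim_{\FF_p}\mu_p(K)-\dim_{\FF_p}E(K)[p]$, the entire computation reduces to the single identity $\dim_{\FF_p}\coker(g)=\dim_{\FF_p}E(K)[p]$: granting it, the torsion and $\mu_p(K)$ contributions cancel and one is left with $\dim_{\FF_p}\Sel^p(E/K)=\dim_{\FF_p}\Selphi+\dim_{\FF_p}\Selphihat-1-\dim_{\FF_p}\mu_p(K)=2d+\#S_1+\#\{v\mid\infty\}-2$.

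To prove this identity I would first note, by functoriality of the connecting maps for the compatible factorisation $[p]=\hat\varphi\circ\varphi$, that $g$ sends the image of $E(K)/pE(K)$ in $\Sel^p(E/K)$ onto the Mordell--Weil part of $\Selphihat$; hence $\coker(g)$ is a quotient of $\Sha(E'/K)[\hat\varphi]$. This is exactly the point at which the two extra hypotheses are needed. The condition $d\leq 1$ forces $\dim_{\FF_p}\Selphihat\leq 1$, so that both $\Sha(E'/K)[\hat\varphi]$ and its quotient $\coker(g)$ are at most one-dimensional; and at a place $v\in S_1$, which exists since $S_1\neq\emptyset$, the local picture of Section~\ref{summary_subseq} (the point $P$ meets a non-identity component, and $\img(\kappa_v)=K_v^{\times}/p$ is as large as possible by Proposition~\ref{kummerimage_prop}) controls both the survival of rational $p$-torsion and the surjectivity of $g$. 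A short case distinction on $d\in\{0,1\}$ should then match $\dim_{\FF_p}\coker(g)$ with $\dim_{\FF_p}E(K)[p]$ on the nose.

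The hard part is precisely this last identity $\dim_{\FF_p}\coker(g)=\dim_{\FF_p}E(K)[p]$: pinning down the cokernel of $\Sel^p(E/K)\to\Selphihat$ is the only genuinely arithmetic input, and it is what forces the clean statement to assume both $S_1\neq\emptyset$ and $d\leq 1$. Everything else is formal manipulation of the exact sequences already available.
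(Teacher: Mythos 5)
Your first assertion and its derivation are exactly the paper's: with $S_2=\emptyset$ the sequence of Theorem~\ref{pdescent_thm} gives $\Selphi=H^1(U_1,\mu_p)$ and the collapsed sequence~\eqref{global_duality_eq} gives $\Selphihat^{\vee}\cong\Cl(\mathcal{O}_{K,S_1})/p$. For the dimension formula you also set up the right object --- the exact sequence attached to $[p]=\hat\varphi\circ\varphi$ --- and your bookkeeping correctly reduces the claim to the identity $\dim_{\FF_p}\coker(g)=\dim_{\FF_p}E(K)[p]$. But that identity is precisely what you do not prove, and your sketch for it would not go through. In truth both sides vanish, for two independent reasons, and neither is supplied by your ``local picture at a place of $S_1$'' heuristic. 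First, the cokernel $T=\coker(g)$ of $\Sel^p(E/K)\to\Selphihat$ is a global obstruction measured by a piece of $\Sha$; the fact that $\img(\kappa_v)$ is large at some $v\in S_1$ says nothing about surjectivity of $g$. The paper kills $T$ by an arithmetic input you are missing: $T$ has \emph{square order} by the Cassels--Tate pairing (Corollary~17 of~\cite{trihan_wuthrich}), and since $T$ is a quotient of $\Selphihat$, which has dimension $d\leq 1$ by the first assertion, square order forces $T=0$. Without the square-order property, $\dim T\leq 1$ leaves open $\dim T=1$ and the count fails.

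Second, the vanishing of the first term $E'(K)[\hat\varphi]/\varphi(E(K)[p])$ up to the expected one dimension is handled by Lemma~\ref{reduction_lem}: if $S_1\neq\emptyset$ then the monodromy pairing gives $\langle P,P\rangle^{\rm log}\neq 0$, hence $\kappa(P)\neq 0$, hence the map $E'(K)[\hat\varphi]\to\Selphi$ is injective and $E(K)[p]=E(K)[\varphi]=\mu_p(K)$ (trivial here). This is the correct way in which the hypothesis $S_1\neq\emptyset$ enters --- it is not about surjectivity of $g$ at all. Your proposed ``case distinction on $d\in\{0,1\}$ matching $\coker(g)$ with $E(K)[p]$ on the nose'' suggests the two quantities could be simultaneously nonzero and linked; they are not linked, and no such case analysis is available. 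To repair the proof, replace the final paragraph by the two inputs above: the Cassels--Tate square-order argument for $T=0$, and Lemma~\ref{reduction_lem} for the injectivity of the first map.
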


\begin{proof}
The first point follows immediately from Theorem~\ref{global_duality_thm}. Now, we have an exact sequence
$$
\xymatrix@1{
E'(K)[\hat\varphi] \ar[r] & \Selphi  \ar[r] &  \Sel^p(E/K)\ar[r] & \Selphihat \ar[r] & T  \ar[r] & 0 }
$$
where $T$ is an abelian group whose order is a square by the Cassels-Tate pairing as shown in Corollary~17 in~\cite{trihan_wuthrich} for instance.
If we assume that $\Cl(\mathcal{O}_{K,S_1})/p$ is trivial or a group of order $p$, then $T=0$. If we also assume that $S_1\neq\emptyset$, then the first map in the sequence is injective, because of Lemma~\ref{reduction_lem}. The result follows.
\end{proof}

\section{Class group pairings}\label{class_group_pairing_sec}


Recall that in this section, we do not have to make the hypotheses~\ref{odd_asp} -- \ref{atp_hyp}. Instead they will hold for any elliptic curve over a number field. To see the duality better, we formulate the first part in general for abelian varieties.

\subsection{Classical case}
\label{classpairing_subsec}

Let $A$ be an abelian variety defined over $K$, and let $\A$ be its N\'eron model over $X=\Spec(\OK)$. We denote by $\Phi=\A/\A^0 = \bigoplus_v \Phi_v$ the group of components of $\A$. Let $Z\subset X$ denotes the set of places of bad reduction of $\A$.

If $\Gamma$ is some (open) subgroup of $\Phi$, we denote by $\A^{\Gamma}$ the inverse image of $\Gamma$ by the canonical projection. Thus, $\A^{\Gamma}$ is an open subgroup scheme of $\A$ having the same generic fibre and whose component group is $\Gamma$.

Let $A^t$ be the dual abelian variety of $A$, and let $\A^t$ be its N\'eron model. Let us recall briefly that the duality between $A$ and $A^t$ is encoded in the Poincar\'e line bundle, which has a canonical structure of a biextension. We call this biextension the Weil biextension, denoted by $W_K$.

We may ask if there is a biextension of $(\A,\A^t)$ by $\mathbf{G}_{{\rm m}}$ extending $W_K$. In~\cite[expos\'e VIII, th\'eor\`eme 7.1, b)]{gro7} Grothendieck defined the so-called monodromy pairing, which is an obstruction to the existence of such a biextension.

More precisely, let $\Phi^t$ be the group of components of $\A^t$. The monodromy pairing is a bilinear map of group schemes
$$
\Phi\times\Phi^t\longrightarrow \bigoplus_{v\in Z} \QZ\cdot v
$$

In \emph{loc. cit.}, Grothendieck proved the following: If $\Gamma$ and $\Gamma'$ are subgroups that are orthogonal under the monodromy pairing, then there exists a unique biextension of $(\A^{\Gamma},\A^{t,\Gamma'})$ by $\mathbf{G}_{{\rm m}}$ extending $W_K$, which we denote by $W^{\Gamma,\Gamma'}$. The main tool we will need is not $W^{\Gamma,\Gamma'}$ itself, but rather its underlying $\mathbf{G}_{{\rm m}}$-torsor on $\A^{\Gamma}\times_S \A^{t,\Gamma'}$, which we will denote by $t(W^{\Gamma,\Gamma'})$.

We now recall the definition of the class group pairing introduced by Mazur and Tate in~\cite[Remark 3.5.3]{mt}, but going back to earlier work by Manin, N\'eron and Tate. 
Suppose again that $\Gamma$ and $\Gamma'$ are orthogonal under the monodromy pairing. The class group pairing associated to the couple $(\Gamma,\Gamma')$ is the bilinear map defined by
\begin{align*}
\A^{\Gamma}(X)\times\A^{t,\Gamma'}(X)\;\longrightarrow & \;\pic(X) \\
(Q,R)\;\longmapsto & \;\langle Q,R\rangle^{\rm cl}:=(Q\times R)^* \Bigl(t\bigl(W^{\Gamma,\Gamma'}\bigr)\Bigr).
\end{align*}

It is also possible to refine this pairing as follows. Let $S\subset X$ be a finite set of finite primes of $K$, and let $V\subseteq X$ be the complement of $S$. Then we define the $S$-class group pairing as
\begin{align*}
\A^{\Gamma}(V)\times\A^{t,\Gamma'}(V)\;\longrightarrow & \;\pic(V) \\
(Q,R)\;\longmapsto & \;\langle Q,R\rangle^{\rm cl}_S:=(Q\times R)^* \Bigl(t\bigl(W^{\Gamma,\Gamma'}\bigr)\Bigr)
\end{align*}
where $\Gamma$ and $\Gamma'$ are as before. In short, the pairing  $\langle \cdot,\cdot \rangle^{\rm cl}_{S}$ is defined on a bigger group that $\langle \cdot,\cdot\rangle^{\rm cl}$ and has values in $\Cl(\mathcal{O}_{K,S})$. If we take $S=Z$, then $\A^{\Gamma}(V)=A(K)$ so the pairing $\langle Q,R\rangle^{\rm cl}_Z$ is defined without any restriction on the points $P$ and $Q$.

\subsection{Logarithmic case}
\label{logpairing_subsec}

Let us recall here some results from~\cite{gil5}. We refer to this article for definitions and conventions concerning log schemes.

Let $U\subseteq X$ be the complement of $Z$. We denote by $X(\log Z)$ the scheme $X$ endowed with the log structure induced by $U$, see Definition~2.1.1. in~\cite{gil5}. First, we recall Proposition~4.2.1 in~\cite{gil5}.
\begin{prop}
There exists a unique biextension $W^{\rm log}$ of $(\A,\A^t)$ by $\mathbf{G}_{{\rm m}}$ for the log flat topology on $X(\log Z)$ extending Weil's biextension $W_K$.
\end{prop}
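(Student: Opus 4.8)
The plan is to establish existence and uniqueness separately, reducing both to a Grothendieck-style obstruction computation that is now carried out in the log flat topology rather than the classical flat topology. The key observation is that passing from $X$ to $X(\log Z)$ is designed precisely so that the monodromy obstruction vanishes, so the statement should follow from the log-flat analogue of the result in \cite[expos\'e VIII, th\'eor\`eme 7.1]{gro7} recalled above.

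First I would recall the general mechanism. By the theory of biextensions, the obstruction to extending a biextension of $(A,A^t)$ by $\mathbf{G}_{\mathrm{m}}$ over the generic fibre to a biextension of $(\A,\A^t)$ over the base lives in a group built from the monodromy pairing $\Phi \times \Phi^t \to \bigoplus_{v\in Z}\QZ\cdot v$. In the classical setting one can only extend after restricting to orthogonal subgroups $\Gamma,\Gamma'$, which is why the classical pairing is defined on $\A^{\Gamma}(X)\times \A^{t,\Gamma'}(X)$ rather than on all of $\A\times\A^t$. The point of working on $X(\log Z)$ is that the log structure ``absorbs'' the contributions at the places of bad reduction: the relevant obstruction group, computed in log flat cohomology over $X(\log Z)$, vanishes, so no restriction to orthogonal subgroups is needed and the full biextension extends.

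Concretely, I would invoke the framework of \cite{gil5} (this is its Proposition~4.2.1, so in the actual paper the proof is simply a citation): the existence of $W^{\mathrm{log}}$ follows from the vanishing, in the log flat topology, of the obstruction class whose classical counterpart is the monodromy pairing. For uniqueness, the difference of two such extensions restricting to $W_K$ on the generic fibre is a biextension of $(\A,\A^t)$ by $\mathbf{G}_{\mathrm{m}}$ that is trivial over $U$; one checks that the group of such ``trivial on $U$'' biextensions in the log flat topology is zero, again because the log structure at $Z$ rigidifies the situation. Thus any two extensions agree, giving uniqueness.

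The main obstacle — and the genuinely new content relative to the classical Grothendieck result — is the computation showing that the obstruction vanishes in the log flat topology. This requires a careful comparison of log flat cohomology on $X(\log Z)$ with ordinary flat cohomology on $U$ and on the strict henselisations at the places $v\in Z$, together with an analysis of how the Kummer log flat site encodes the monodromy data. Since this computation is precisely the content of \cite[Proposition~4.2.1]{gil5}, the honest proof here is to cite that result; a self-contained argument would have to reproduce the log-flat descent and the vanishing of the monodromy obstruction worked out there.
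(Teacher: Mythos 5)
Your proposal is correct and matches the paper exactly: the paper offers no argument of its own but simply recalls Proposition~4.2.1 of~\cite{gil5}, which is precisely the citation you identify as the honest proof. Your sketch of what that cited result involves (Grothendieck's monodromy obstruction from SGA~7 vanishing in the Kummer log flat topology, plus rigidity for uniqueness) is a faithful account of the mechanism, so there is nothing to correct.
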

We denote by $\logpic(X,Z)$ the group $H^1_{\text{\rm kpl}}(X(\log Z), \mathbf{G}_{{\rm m}})$ of $\mathbf{G}_{{\rm m}}$-torsors for the Kummer log flat topology. From the existence of $W^{\rm log}$, we deduce a pairing
\begin{align*}
\A(X)\times\A^t(X) \;\longrightarrow & \;\logpic(X,Z) \\
(Q,R)\;\longmapsto & \;\langle Q,R\rangle^{\rm log}:=(Q\times R)^* \Bigl(t\bigl(W^{\rm log}\bigr)\Bigr)
\end{align*}
that we call the logarithmic class group pairing.

The group $\logpic(X,Z)$ has the following description (see~\cite[3.1.3]{gil5})
$$
\logpic(X,Z)= \Bigl(\Div(U)\oplus\bigoplus_{v\in Z} \mathbb{Q}\cdot v\Bigr)/\Divp(X),
$$
where $\Divp(X)$ denotes the subgroup of (usual) principal divisors on $X$.
In other words, $\logpic(X,Z)$ is the group of divisors on $X$ with rational coefficients above $Z$ modulo principal divisors. Of course, if $Z'\subseteq Z$ is a subset of $Z$, then $\logpic(X,Z')$ is a subgroup of $\logpic(X,Z)$. In particular, the group $\pic(X)$ is a subgroup of $\logpic(X,Z)$, and we have an exact sequence
\begin{equation}\label{pic_vs_log_eq}
  \xymatrix{ 0 \ar[r] & \pic(X) \ar[r] & \logpic(X,Z) \ar[r]^{\nu} & \bigoplus_{v\in Z} \QZ\cdot v  \ar[r] & 0.}
\end{equation}

The following Proposition explains how the logarithmic class group pairing is linked to the class group pairing and the monodromy pairing. We recall Proposition~4.3.2 in~\cite{gil5}.
\begin{prop}
\label{pairings_prop}
\begin{enumerate}
\item Let $\Gamma$ and $\Gamma'$ be orthogonal subgroups as before. Then the restriction of $\langle \cdot ,\cdot\rangle^{\rm log}$ to $\A^{\Gamma}(X)\times\A^{t,\Gamma'}(X)$ is equal to $\langle \cdot ,\cdot\rangle^{\rm cl}$.
\item We have a commutative diagram
$$
\xymatrix{
\A(X)\times\A^t(X)\ar[r]^{\langle \cdot ,\cdot\rangle^{\rm log}} \ar[d] & \logpic(X,Z) \ar[d]^{\nu}\\
\Phi(X)\times\Phi'(X) \ar[r] & \bigoplus_{v\in Z} \QZ\cdot v 
}$$
where the horizontal lower arrow is induced by the monodromy pairing, and the vertical arrow on the left is the reduction map.
\end{enumerate}
\end{prop}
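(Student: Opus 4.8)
The plan is to establish part~(2) first, since it computes the image under $\nu$ of the logarithmic pairing, and then to deduce part~(1) from it together with the uniqueness of biextensions extending $W_K$. The backbone of both arguments is the exact sequence~\eqref{pic_vs_log_eq}, which identifies $\pic(X)$ with the kernel of $\nu$ and presents $\logpic(X,Z)$ via the description $\bigl(\Div(U)\oplus\bigoplus_{v\in Z}\QQ\cdot v\bigr)/\Divp(X)$.

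For part~(2), the task is to read off the rational coefficients above $Z$ of the torsor $(Q\times R)^*t(W^{\log})$. I would argue place by place. After localising at $v\in Z$, the biextension $W^{\log}$ is the unique log flat biextension extending $W_K$, and its failure to descend to an ordinary $\mathbf{G}_{\rm m}$-biextension is exactly what Grothendieck's monodromy pairing measures. Concretely, pulling back along $(Q\times R)$ produces at $v$ a rational coefficient in $\QQ\cdot v$, and I would show that its class in $\QZ\cdot v$ under $\nu$ equals the monodromy pairing evaluated on the reductions $\bar Q_v\in\Phi_v$ and $\bar R_v\in\Phi^t_v$. Summing over $v\in Z$ then identifies $\nu\circ\langle\cdot,\cdot\rangle^{\log}$ with the monodromy pairing precomposed with the reduction maps, which is precisely the commutativity of the square.

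Granting part~(2), part~(1) follows almost formally. Let $\Gamma$ and $\Gamma'$ be orthogonal, and take $Q\in\A^{\Gamma}(X)$ and $R\in\A^{t,\Gamma'}(X)$. Orthogonality makes the monodromy pairing vanish on the reductions, so part~(2) gives $\nu(\langle Q,R\rangle^{\log})=0$; by~\eqref{pic_vs_log_eq} the class $\langle Q,R\rangle^{\log}$ therefore already lies in $\pic(X)$. To identify it with $\langle Q,R\rangle^{\rm cl}$, I would compare biextensions directly: the restriction of $W^{\log}$ to $\A^{\Gamma}\times_X\A^{t,\Gamma'}$ and the image of the ordinary biextension $W^{\Gamma,\Gamma'}$ are both log flat biextensions of $(\A^{\Gamma},\A^{t,\Gamma'})$ extending $W_K$, and the vanishing of the monodromy obstruction forces the former to be ordinary as well. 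By the uniqueness of such an extension --- the very statement of Grothendieck used to define $W^{\Gamma,\Gamma'}$ --- the two biextensions coincide. Pulling back their underlying torsors along $(Q\times R)$ yields $\langle Q,R\rangle^{\log}=\langle Q,R\rangle^{\rm cl}$ inside $\pic(X)\subseteq\logpic(X,Z)$.

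The main obstacle is the local identification at the heart of part~(2): showing that the Kummer log flat biextension $W^{\log}$ records, at each place of bad reduction, exactly the value of Grothendieck's monodromy pairing. This requires unwinding the construction of $W^{\log}$ from~\cite{gil5} in the log flat topology on $X(\log Z)$ and matching the resulting rational coefficient at $v$ with the monodromy datum --- the point at which the logarithmic structure does its essential work. Once this local statement is secured, both the descent in part~(1) and the global commutativity in part~(2) are formal consequences of~\eqref{pic_vs_log_eq} and the uniqueness of biextensions.
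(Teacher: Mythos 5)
The paper does not actually prove this proposition: it is recalled verbatim from Proposition~4.3.2 of~\cite{gil5}, so there is no in-paper argument to compare against. Judged on its own terms, your proposal is a reasonable outline of how such a proof must go, but it contains a genuine gap at exactly the point you yourself flag as ``the main obstacle''. The entire content of part~(2) is the local statement that the fractional coefficient at $v$ of $(Q\times R)^*t(W^{\log})$, read through $\nu$ in $\QZ\cdot v$, equals Grothendieck's monodromy pairing on the reductions $\bar Q_v$, $\bar R_v$. In your text this is announced (``I would show that its class \dots equals the monodromy pairing'') but never derived: nothing is said about how $W^{\log}$ is built in the Kummer log flat topology on $X(\log Z)$, nor how one extracts a rational number at $v$ from it, nor why that number reduces mod $\ZZ$ to the monodromy datum. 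Since part~(1) is then deduced from part~(2), the whole argument rests on an unproved assertion that is essentially a restatement of the theorem. To close this you would need to unwind the construction of $W^{\log}$ from~\cite{gil5} (or, on an elliptic curve, use the explicit fibral divisors $F_Q$ of section~\ref{explicit_pairing_subsec} together with the intersection-theoretic description of the monodromy pairing as in~\cite{bosch_lorenzini}).

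A secondary, smaller issue sits in your deduction of part~(1). You compare the restriction of $W^{\log}$ to $\A^{\Gamma}\times_X\A^{t,\Gamma'}$ with the image of the ordinary biextension $W^{\Gamma,\Gamma'}$, and conclude they coincide by ``the uniqueness of such an extension --- the very statement of Grothendieck''. But Grothendieck's uniqueness concerns ordinary ($\mathbf{G}_{\rm m}$-, fppf) biextensions; to identify an ordinary biextension with a log flat one you need either the uniqueness of \emph{log flat} biextensions of $(\A^{\Gamma},\A^{t,\Gamma'})$ extending $W_K$ (the analogue, for these open subgroup schemes, of the uniqueness recalled just before the proposition), or the injectivity of the comparison map from ordinary to log flat biextensions. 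Either suffices, but as written you are invoking the wrong uniqueness statement. The step from $\nu(\langle Q,R\rangle^{\log})=0$ to $\langle Q,R\rangle^{\log}\in\pic(X)$ via the exactness of~\eqref{pic_vs_log_eq} is fine.
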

We also have a natural surjection
$$
\logpic(X,Z) \longrightarrow \pic(U)
$$
whose kernel is the subgroup generated by classes with support in $Z$. When composing  the logarithmic class group pairing with this map, we recover the $Z$-class group pairing defined at the end of section~\ref{classpairing_subsec}.

\subsection{A remark on metrics}\label{metrics_subsec}

Let us denote by $\widehat{\pic}(\mathcal{O}_{K,Z})$ the group of locally free $\mathcal{O}_{K,Z}$-modules of rank one endowed with metrics at all places in $Z$. Then, using the description of $\widehat{\pic}(\mathcal{O}_{K,Z})$ in terms of id\`eles, it is easy to check that
\begin{equation}\label{log_vs_hat_eq}
\widehat{\pic}(\mathcal{O}_{K,Z})_{\rm tors}\simeq \logpic(X,Z).
\end{equation}
It is to note that elements in $\widehat{\pic}(\mathcal{O}_{K,Z})$ have no metrics at infinite places. If we add metrics at infinite places, then we get the so-called Arakelov class group of $\mathcal{O}_{K,Z}$. More precisely, $\widehat{\pic}(\mathcal{O}_{K,Z})$ is a quotient of the Arakelov class group by the natural subgroup isomorphic to $\mathbb{R}\times (\mathbb{R}/\mathbb{Z})^{\#\{v|\infty\}-1}$.

However, it is possible to endow the Poincar\'e line bundle on $\E_U\times \E_U$ with natural metrics at places in $Z$. As a result, we get a metrised class group pairing
$$
\E(U)\times \E(U)\rightarrow \widehat{\pic}(\mathcal{O}_{K,Z})
$$
and it is easy to check that this metrised pairing is equal to the logarithmic class group pairing via~\eqref{log_vs_hat_eq}. Thus, the logarithmic techniques we use here are closely related to the techniques of Agboola and Pappas in~\cite{ap01}. The main difference is that we do not put metrics at infinite places.

\subsection{The explicit computation of the pairing}\label{explicit_pairing_subsec}

Here we explain how we are able to compute numerically values of the logarithmic class group pairing on an elliptic curve.
Via the Proposition~\ref{pairings_prop}, this will also give an explicit formula for the class group pairing 
$$
\langle \cdot ,\cdot\rangle^{\rm cl}:\E(\OK)\times\E^0(\OK)\longrightarrow \pic(X) = \Cl(K)
$$
and we recover the monodromy pairing as described in~\cite[Example 5.8]{bosch_lorenzini}.

The first step is to give an alternative description of our pairing. Let us denote by $f\colon\E\to X=\Spec(\OK)$ the structural morphism and by $O$ the unit section of $\E$. If $Q\colon X\to\E$ is a section of $\E$, then the image of $Q$ is a divisor on $\E$, and we will denote by $[Q]$ the class of this divisor in $\pic(\E)$, just as we write $[D]$ for the class of any divisor $D$ in $\pic(\E)$ and $\logpic(\E,Z)$. If $\mathcal{L}$ is a line bundle on $\E$, we define the rigidified line bundle associated to $\mathcal{L}$ by the formula
$$
\mathcal{L}^{\rm rig}:=\mathcal{L}-f^*O^*\mathcal{L}
$$
Note that, for any point $R\in \E(\OK)$ we get $R^*\mathcal{L}^{\rm rig}=R^*\mathcal{L}-O^*\mathcal{L}$.

\begin{prop}\label{log_bundle_prop}
Let $Q\in \E(\OK)$. Then there exists a fibral divisor $F_Q$ with rational coefficients such that the element
$$
\mathcal{L}_Q:=([Q]-[O])^{\rm rig}+[F_Q]
$$
of $\logpic(\E,Z)$, satisfies the Theorem of the cube. Moreover:
\begin{enumerate}\renewcommand{\theenumi}{(\roman{enumi})}
\item\label{cl1_item} For all $R\in \E(\OK)$ we have
$$
\langle Q,R\rangle^{\rm log}=(-R)^*\mathcal{L}_Q=O^*([Q+R]-[Q]-[R]+[O])+(-R)^*[F_Q]
$$
\item\label{cl2_item} The logarithmic line bundle $(-R)^*[F_Q]$ has integral coefficients if and only if (the reductions of) $Q$ and $R$ are orthogonal under the monodromy pairing.
\item\label{cl3_item} If $R$ belongs to $\E^0(\OK)$, then $(-R)^*[F_Q]=0$.
\end{enumerate}
\end{prop}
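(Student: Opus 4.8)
The plan is to construct the fibral correction divisor $F_Q$ fibre by fibre so as to force the cube relation, and then to identify the resulting pairing by evaluating it against sections, exploiting the explicit idèle-theoretic description of $\logpic(\E,Z)$ via \eqref{pic_vs_log_eq}. First I would analyze the rigidified bundle $([Q]-[O])^{\rm rig}$: the theorem of the cube holds for the class on the generic fibre (this is the classical computation on the elliptic curve $E$), so the only obstruction to the cube relation for $\mathcal{L}_Q$ lives in the fibral part, i.e. in $\bigoplus_{v\in Z}\Div(\E_v)\otimes\QQ$. The point is that a fibral divisor is determined by its multiplicities on the components of each bad fibre, and the cube relation imposes a system of linear conditions on these multiplicities, one at each place $v\in Z$. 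I would show this linear system is solvable over $\QQ$ — this is where rational rather than integral coefficients become essential — by appealing to the nondegeneracy properties of the intersection matrix of the components of a geometric fibre (the intersection pairing on fibral divisors is negative semidefinite with kernel exactly the fibre class, so after quotienting one gets a nondegenerate pairing and can invert). This produces $F_Q$, uniquely up to adding multiples of the full fibres, which do not affect the class in $\logpic(\E,Z)$.

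For part \ref{cl1_item}, once $\mathcal{L}_Q$ satisfies the cube relation, I would use the standard fact that a cube bundle pulls back along translation-and-sum maps to give the expected multilinear identity; evaluating $(-R)^*\mathcal{L}_Q$ and unwinding the rigidification $R^*\mathcal{L}^{\rm rig}=R^*\mathcal{L}-O^*\mathcal{L}$ yields the formula $O^*([Q+R]-[Q]-[R]+[O])+(-R)^*[F_Q]$. To match this with $\langle Q,R\rangle^{\rm log}$, I would invoke the characterization of $W^{\rm log}$ as the \emph{unique} log-flat biextension extending $W_K$ (the Proposition preceding \ref{pairings_prop}): the rigidified cube bundle $\mathcal{L}_Q$, as $Q$ varies, assembles into a biextension structure on $(\E,\E^t)$, and by the uniqueness it must coincide with $t(W^{\rm log})$, so the two pairings agree.

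Parts \ref{cl2_item} and \ref{cl3_item} then follow by applying the compatibility in Proposition \ref{pairings_prop}. For \ref{cl2_item}, the map $\nu$ of \eqref{pic_vs_log_eq} sends $\langle Q,R\rangle^{\rm log}$ to the monodromy pairing of the reductions of $Q$ and $R$; since the first summand $O^*(\cdots)$ lies in $\pic(X)=\ker\nu$, the obstruction to $(-R)^*[F_Q]$ having integral coefficients is exactly $\nu(\langle Q,R\rangle^{\rm log})$, which vanishes iff $Q$ and $R$ are orthogonal for monodromy. For \ref{cl3_item}, if $R\in\E^0(\OK)$ then its reduction lies in the identity component at every place, so the section $-R$ meets only the identity component and hence has trivial intersection with the (non-identity-component-supported) fibral divisor $F_Q$, giving $(-R)^*[F_Q]=0$; alternatively this is the restriction statement of Proposition \ref{pairings_prop}\,(1), identifying $\langle\cdot,\cdot\rangle^{\rm log}$ on $\E(X)\times\E^0(X)$ with the honest $\pic(X)$-valued class group pairing.

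The main obstacle I expect is the solvability of the linear system defining $F_Q$ together with the verification that the solution genuinely repairs the cube relation: one must pin down exactly which linear functional of the component-multiplicities the cube obstruction is, and check that it lies in the image of the fibral intersection pairing so that a rational solution exists. Controlling this uniformly across the Kodaira types (and in particular making \ref{comp3_asp} unnecessary here, since this section dispenses with the standing hypotheses) is the technical heart of the argument.
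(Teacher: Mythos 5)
Your proposal is correct and follows the same overall skeleton as the paper's proof, but it fills in the two outsourced steps differently, and part (iii) is argued by a different mechanism. For existence of $F_Q$ and the identification $\langle Q,R\rangle^{\rm log}=(-R)^*\mathcal{L}_Q$, the paper simply cites Moret--Bailly and \cite{gil5} (noting only where the sign $-R$ comes from: the generic fibre of $\mathcal{L}_Q$ is the \emph{inverse} of the bundle attached to $Q$ under $E\simeq\pic^0(E)$, a point you do not explain); your constructive route via the fibral intersection matrix is exactly what the paper carries out in the \emph{next} lemma, where $F_Q$ is obtained by restricting a divisor $G_Q$ on the minimal regular model $\mathcal{X}$ determined by the conditions $\Gamma.([Q]-[O]+[G_Q])=0$ — so your sketch anticipates that computation, at the cost of having to justify that killing all fibral intersection numbers really is equivalent to the cube relation (the technical point you correctly flag). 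Your derivation of (i) and your argument for (ii) coincide with the paper's (the paper makes the translation step explicit via $(-R)^*\mathcal{N}=O^*T_{-R}^*\mathcal{N}$ and $T_{-R}^*[Q]=[Q+R]$). For (iii), however, you argue that the section $-R$ misses the support of $F_Q$; this presupposes the normalization that $F_Q$ avoids the identity components, which the proposition does not impose ($F_Q$ is only determined up to fibral classes). The paper's argument is normalization-free: since $\mathcal{L}_Q$ is cubic it is rigidified, whence $O^*[F_Q]=0$, and since $F_Q$ is fibral and $R\in\E^0$ one has $T_{-R}^*[F_Q]=[F_Q]$, so $(-R)^*[F_Q]=O^*T_{-R}^*[F_Q]=0$. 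Your fallback appeal to Proposition~\ref{pairings_prop}(1) only yields integrality of $(-R)^*[F_Q]$, not its vanishing, so the support argument (with the normalization made explicit) is the one to keep.
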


\begin{proof}
The existence of $F_Q$, and the equality $\langle Q,R\rangle^{\rm log}=-R^*\mathcal{L}_Q$ are rather standard (see~\cite[chap. III, 1.4]{mb} and~\cite[4.3]{gil5}); note that the minus sign is coming from the fact that the generic fibre of the line bundle $\mathcal{L}_Q$ is equal to the inverse of the line bundle attached to $Q$ via the canonical isomorphism $E(K)\simeq \pic^0(E)$. Thus, we have
$$
\langle Q,R\rangle^{\rm log}=(-R)^*\mathcal{L}_Q=(-R)^*([Q]-[O])-O^*([Q]-[O])+(-R)^*[F_Q]
$$
Now, let $T_R\colon \E\to \E$ be the translation by $R$ map. For any line bundle $\mathcal{N}$ on $\E$ we have $(-R)^*\mathcal{N}=O^*T_{-R}^*\,\mathcal{N}$, and for $Q\in \E(S)$ we get $T_{-R}^*[Q]=[Q+R]$. Now, we can write
\begin{align*}
\langle Q,R\rangle^{\rm log} &=(-R)^*([Q]-[O])-O^*([Q]-[O])+(-R)^*[F_Q] \\
&=O^*([Q+R]-[R])-O^*([Q]-[O])+(-R)^*[F_Q] \\
&=O^*([Q+R]-[Q]-[R]+[O])+(-R)^*[F_Q]
\end{align*}
which proves~\ref{cl1_item}. By Proposition~\ref{pairings_prop} and the exactness of~\eqref{pic_vs_log_eq}, we see that $\langle Q,R\rangle^{\rm log}$ belongs to the usual class group if and only if $Q$ and $R$ are orthogonal to each other under Grothendieck's monodromy pairing. By the formula above, this happens if and only if $(-R)^*[F_Q]$ is a divisor with integral coefficients, so~\ref{cl2_item} is proved. Finally, if $R$ belongs to $\E^0(\OK)$, then $T_{-R}^*[F_Q]=[F_Q]$, because $F_Q$ is fibral. Moreover, $\mathcal{L}_Q$ being cubic, it is rigidified, and this immediately implies that $O^*[F_Q]=0$. Putting that together, we get
$$
(-R)^*[F_Q]=O^*T_{-R}^*[F_Q]=O^*[F_Q]=0
$$
which proves~\ref{cl3_item}.
\end{proof}

We now recall the notion of denominator ideal of a point of $E$, see~\cite{wuthfamilies}. Let $Q\in \E(\OK)=E(K)$.
Given a finite place $v$ in $\OK$, we choose a minimal Weierstrass equation for $E$ with coefficients in $\mathcal{O}_v$. 
We set 
\begin{equation*}
  e_v(Q) = -\tfrac{1}{2} \min\bigl( v(x(Q)), \ 0\bigr)  
\end{equation*}
where $x(Q)$ is the $x$-coordinate of $Q$ in our chosen equation.
On the one hand, if $Q\neq O$, then one defines  the divisor (or ideal) $e(Q)$ on $X=\Spec(\OK)$ by
$$
e(Q):=\sum_{v} e_v(Q) \cdot v.
$$
If $E$ admits a global minimal Weierstrass equation, then the ideal $x(Q)\OK$ can be written as $a(Q)\cdot e(Q)^{-2}$ for some ideals $a(Q)$ and $e(Q)$ in $\OK$ coprime to each other.

On the other hand, we set $e(O)$ to be the ideal $O^*\mathcal{O}_{\E}(O)$, where $\mathcal{O}_{\E}(O)$ is the sheaf of ideals attached to the divisor defined by $O$. Now, $e(Q)$ is well defined for any $Q\in \E(\OK)$ and, using this ideal, we can now compute the first part of $\langle Q,R\rangle^{\rm log}$.

\begin{lem}\label{explicit_pairing_lem}
Let $Q,R\in \E(\OK)$. Then
$$
\langle Q,R\rangle^{\rm log}=[e(Q+R)-e(Q)-e(R)+e(O)] + (-R)^*[F_Q]
$$
in the group $\logpic(X,Z)$. 
\end{lem}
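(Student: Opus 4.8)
The plan is to reduce the statement to the formula already obtained in Proposition~\ref{log_bundle_prop}\ref{cl1_item} and then to reinterpret its ``horizontal part'' in terms of the denominator ideals $e(\cdot)$. Concretely, Proposition~\ref{log_bundle_prop}\ref{cl1_item} gives
$$
\langle Q,R\rangle^{\rm log}=O^*\bigl([Q+R]-[Q]-[R]+[O]\bigr)+(-R)^*[F_Q].
$$
Since the fibral term $(-R)^*[F_Q]$ appears verbatim in the assertion, it suffices to prove that for every section $S\in\E(\OK)$ one has $O^*[S]=[e(S)]$ in $\pic(X)$; the claimed identity then follows from the additivity of the pullback $O^*$ applied to $S\in\{Q+R,\,Q,\,R,\,O\}$.

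The identity $O^*[S]=[e(S)]$ is the only genuine content, and I would establish it place by place. For $S=O$ there is nothing to prove, since by definition $e(O)=O^*\mathcal{O}_{\E}(O)$. For $S\neq O$, I first note that $[S]$ and $[O]$ are distinct horizontal divisors, so their intersection is supported on the fibres where $S$ and $O$ share the same reduction; as $O$ lands in the identity component, these are exactly the places $v$ at which $S$ reduces into the formal group, i.e. where $v(x(S))<0$. Working over $\mathcal{O}_v$ with a minimal Weierstrass equation, I would use the formal parameter $t=-x/y$, for which $O$ is the locus $t=0$, so that the intersection multiplicity of $S$ and $O$ at $v$ equals $v(t(S))$. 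Because in the formal group $x=t^{-2}(1+\cdots)$ and $y=-t^{-3}(1+\cdots)$ with coefficients in $\mathcal{O}_v$, one has $v(x(S))=-2\,v(t(S))$, whence $v(t(S))=-\tfrac12 v(x(S))=e_v(S)$. Summing over all $v$ yields $O^*\mathcal{O}_{\E}(S)=\mathcal{O}_X(e(S))$, that is $O^*[S]=[e(S)]$.

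Assembling these pieces, additivity of $O^*$ gives
$$
O^*\bigl([Q+R]-[Q]-[R]+[O]\bigr)=[e(Q+R)]-[e(Q)]-[e(R)]+[e(O)]=\bigl[e(Q+R)-e(Q)-e(R)+e(O)\bigr],
$$
which, combined with the formula from Proposition~\ref{log_bundle_prop}\ref{cl1_item}, proves the lemma. The step I expect to be the main obstacle is the local intersection computation at places of bad reduction: one must confirm that on the N\'eron model (the smooth locus of the minimal regular model) the section $O$ meets only those $S$ reducing into the identity component, and that the quantity $e_v(S)=-\tfrac12\min\bigl(v(x(S)),0\bigr)$ computed from a minimal equation genuinely records this intersection multiplicity. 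The factor $\tfrac12$ and the integrality of $e_v(S)$ both rely on $v(x(S))$ being even for points in the formal group, which in turn uses minimality of the Weierstrass model.
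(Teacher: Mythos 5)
Your proposal is correct and follows essentially the same route as the paper: both reduce to Proposition~\ref{log_bundle_prop}~(i) and then identify $O^*[S]$ with $[e(S)]$, the only difference being that the paper cites Silverman (p.~250) for this intersection-theoretic identity while you carry out the local computation with the formal parameter $t=-x/y$ explicitly. Your treatment of the case $S=O$ via the definition $e(O)=O^*\mathcal{O}_{\E}(O)$ and your use of minimality to get $v(x(S))=-2\,v(t(S))$ are exactly the points the citation is meant to cover.
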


\begin{proof}
When $Q\neq O$, one proves, using intersection theory, that the class $[e(Q)]$ is equal to the pull-back $O^*[Q]$, see~\cite[p. 250]{sil2}. Therefore, the formula follows from Proposition~\ref{log_bundle_prop}, \ref{cl1_item}. 
\end{proof}

Now we choose a Weierstrass equation with coefficients in $\OK$, corresponding to a choice of an invariant differential $\omega$ over $K$.
Then for any finite place $v$, there is an element $u_v\in \mathcal{O}_v$ such that the equation becomes minimal after a change of variables that makes $u_v^{-1} \omega$ a N\'eron differential on $\E/\mathcal{O}_v$.
\begin{lem}\label{e_zero_lem}
 If we set $e_v(O) = v(u_v)$, then we have $e(O) = \sum_v e_v(O)\cdot v\in \Cl(K)$.
\end{lem}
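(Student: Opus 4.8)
The plan is to reduce the computation of $e(O)$ to the class of the sheaf of invariant differentials, and then to read off that class from the local comparison factors $u_v$. Recall that for $Q\neq O$ the equality $[e(Q)]=O^*[Q]$ came from intersection theory, applied to the two \emph{distinct} sections $Q$ and $O$. For $Q=O$ this argument breaks down because the sections coincide, and the self-intersection has to be interpreted through the conormal sheaf. So the first step is the standard identification: following the definition, $e(O)=O^*\mathcal{O}_\E(O)$ is the pullback along the zero section of the ideal sheaf $\mathcal{I}_O$ of the divisor $O$, and since $O$ factors through the smooth locus $\E^0$ this pullback is canonically the conormal sheaf
$$
e(O)=O^*\mathcal{I}_O=\mathcal{I}_O/\mathcal{I}_O^2=O^*\Omega^1_{\E/X}=:\underline{\omega},
$$
the invertible sheaf on $X$ of invariant differentials. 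It therefore suffices to compute the class of $\underline{\omega}$ in $\pic(X)=\Cl(K)$.

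For the second step I would use the chosen global differential $\omega$ as a rational section of $\underline{\omega}$ and compute its divisor. Because the Weierstrass equation has coefficients in $\OK$, the differential $\omega$ is a regular section of the invariant differential sheaf of the global Weierstrass model, and the discrepancy between that model and the Néron model $\E$ at a place $v$ is measured precisely by $u_v$. Indeed, by the defining property of $u_v$ the differential $u_v^{-1}\omega$ is a Néron differential, hence a generator of the free rank-one module $\underline{\omega}\otimes_{\OK}\mathcal{O}_v$, so the identity $\omega=u_v\cdot(u_v^{-1}\omega)$ shows that $\omega$ vanishes to order $v(u_v)$ at $v$. At good places, and more generally wherever the chosen equation is already minimal, one has $u_v\in\mathcal{O}_v^{\times}$ and thus $v(u_v)=0$, so $\omega$ has only finitely many zeros and
$$
\diviseur(\omega)=\sum_v v(u_v)\cdot v=\sum_v e_v(O)\cdot v .
$$
Since a rational section realizes $\underline{\omega}$ as $\mathcal{O}_X(\diviseur(\omega))$, this gives $[\underline{\omega}]=\bigl[\sum_v e_v(O)\cdot v\bigr]$ in $\Cl(K)$, which together with the first step is exactly the asserted equality.

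The only genuinely delicate point is the sign bookkeeping in the first step: one must check that it is the pullback of the \emph{ideal} sheaf of $O$ (the conormal sheaf $\underline{\omega}$), and not its inverse the normal sheaf, that enters $e(O)$, so that the order of vanishing of $\omega$ appears with a $+$ sign and matches the convention $e_v(O)=v(u_v)\geq 0$. This is forced by consistency with the case $Q\neq O$, where $e(Q)$ is an effective (denominator) ideal. The remaining ingredients—that $O$ factors through $\E^0$ so that the conormal identification is valid fibrewise, including at the bad fibres, and that $u_v^{-1}\omega$ is a local generator of $\underline{\omega}$—are standard facts about Néron models and minimal Weierstrass models (compare the discussion around p.~250 of~\cite{sil2} already invoked for Lemma~\ref{explicit_pairing_lem}).
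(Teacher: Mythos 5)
Your overall strategy is the same as the paper's: reduce $e(O)$ to the invertible sheaf of invariant differentials and read off its class from the local comparison factors $u_v$, using that $u_v^{-1}\omega$ generates the module of N\'eron differentials at $v$. The paper gets there via $e(O)=O^*\mathcal{O}_{\E}(O)=R^1f_*\mathcal{O}_{\E}$ and Grothendieck duality, landing on $(f_*\omega_{\E/X})^{-1}$; you instead invoke the conormal identification $O^*\mathcal{I}_O=\mathcal{I}_O/\mathcal{I}_O^2\cong O^*\Omega^1_{\E/X}=f_*\omega_{\E/X}$. These two identifications produce \emph{mutually inverse} line bundles, and this is exactly where the "delicate sign bookkeeping" you flag goes wrong rather than right. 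Concretely: under the trivialisation $\alpha\omega\mapsto\alpha$, the module of N\'eron differentials is the fractional ideal $\prod_v\mathfrak{m}_v^{-v(u_v)}$ (an element $\alpha\omega$ is everywhere integral iff $v(\alpha)\geq -v(u_v)$ for all $v$), so under the ideal-class convention — the one under which the denominator ideal $e(Q)=\prod_v\mathfrak{m}_v^{e_v(Q)}$ for $Q\neq O$ has class $+\sum_v e_v(Q)\cdot v$ — the class of $f_*\omega_{\E/X}$ is $-\sum_v v(u_v)\cdot v$, and it is the dual $(f_*\omega_{\E/X})^{-1}$ that has class $+\sum_v v(u_v)\cdot v$. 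Your computation recovers the $+$ sign only because in the second step you switch to the divisor-of-a-rational-section convention ($[\mathcal{L}]=[\diviseur(s)]$), which is the \emph{negative} of the ideal-class convention (the integral ideal $\mathfrak{m}_v$, with local generator $\pi_v$, has $\diviseur(1)=-v$). So the "consistency with the case $Q\neq O$" that you appeal to in fact forces the opposite choice from the one you make.

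The repair is essentially the paper's argument: take $\mathcal{O}_{\E}(O)$ to be the invertible sheaf attached to the divisor $O$ (i.e.\ the \emph{normal} bundle of the zero section after pullback), so that by adjunction $O^*\mathcal{O}_{\E}(O)\cong(\mathcal{I}_O/\mathcal{I}_O^2)^{-1}\cong(f_*\omega_{\E/X})^{-1}\cong R^1f_*\mathcal{O}_{\E}$, whose ideal class is $+\sum_v v(u_v)\cdot v$ as required; this is also the sign consistent with the $+\tfrac{1}{12}v(\Delta)$ correction in the local height formula mentioned in the remark following the lemma. Your second step (locating the class via the local generators $u_v^{-1}\omega$) and the implicit observation that the answer only changes by a principal divisor when $\omega$ is rescaled are exactly as in the paper; only the dualisation in the first step needs to be inserted, or else the conventions tracked consistently throughout.
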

\begin{proof}
 From Liu's book~\cite[\S{}9.4.4, Lemma 4.29]{liu} we can extract
$$ e(O)=O^*\mathcal{O}_{\E}(O)= R^1 f_* \mathcal{O}_{\E}.$$
 By Grothendieck duality, we get 
$ e(O) \simeq (f_* \omega_{\E/X})^{-1}$ where $\omega_{\E/X}$ denotes the canonical sheaf. Now $\omega$ is a global section
of $\omega_{\E/X}$ and, at all place $v$, the N\'eron differential $u_v^{-1} \omega$ is a generator of $H^0(\mathcal{O}_v, f_*\omega_{\E/\mathcal{O}_v})$.

Finally, it is to note that the divisor class $\sum_v e_v(O) \cdot v$ does not depend on the choice of $\omega$. If we multiplied $\omega$ by an element $\alpha\in K^\times$, then the sum would get changed by adding the term $\sum_v v(\alpha^{-1}) \cdot v$ which is a principal divisor.
\end{proof}

\begin{rmq}
The values of $v(u_v)$ can easily be computed using Tate's algorithm. They also appear as correcting factors to the Tamagawa numbers in the Birch and Swinnerton-Dyer conjecture over $K$. A consequence of this Lemma, we note that the expression $e(Q)-e(O)$ behaves well under base change. 

It is a standard fact (see~\cite[\S{}9.4, Exercise 4.14]{liu}) that
$$
\Delta^{-1}\simeq (f_*\omega_{\E/X})^{12}
$$
where $\Delta$ is the minimal discriminant of $E$, which is an ideal in $\OK$. 
Hence the correction factor $\tfrac{1}{12}v(\Delta)$ in the local height formula appears as in Theorem~VI.4.1 in~\cite{sil2}.
\end{rmq}

\begin{rmq}\label{pourlescalculs_rmq}
From the Lemma, we also deduce that $e(O)$ is trivial if and only if there exists a global minimal equation over $\OK$.
 Moreover, in this case, it is possible to compute all the $e_v(Q)$ using the same coordinates.

Suppose $E/K$ is an elliptic curve which is coming by base change from an elliptic curve defined over $\mathbb{Q}$. Then, in the case that no place of additive reduction ramifies in $K/\QQ$, the global minimal equation over $\QQ$ will still be a global minimal equation over $K$.  See Proposition~V.1 and Remark~V.1.1 in~\cite{sil1}.
\end{rmq}

We now turn to the computation of the divisor $F_Q$, which will allow us to compute $\langle Q,R\rangle^{\rm log}$ completely.
Let $Q\in \E(\OK)$. By~\ref{cl3_item} of Proposition~\ref{log_bundle_prop}, the support of $F_Q$ can not contain the connected component of the fibre at any place $v$. So $F_Q$ is supported in the bad fibres and we can look individually at each type of bad reduction. So we will fix the place $v$ now and suppose that $\E/\mathcal{O}_v$ has bad reduction.

Let $\mathcal{X}/\mathcal{O}_v$ be the minimal regular model of $E/K_v$ so that $\E$ is the open subscheme of $\mathcal{X}$ obtained by removing multiple components in the special fibre.
\begin{lem}
  Let $Q\in \E(\mathcal{O}_v)$. There exists a unique fibral divisor $G_Q$ on $\mathcal{X}$ such that
  the intersection $\Gamma. \bigl( [Q] - [O] + [G_Q])$ is trivial for all fibral divisors $\Gamma$ on $\mathcal{X}$ and such that $G_Q$ intersects the connected component $\E^0$ trivially.
  Moreover, $F_Q$ is the intersection of $G_Q$ with $\E$.
\end{lem}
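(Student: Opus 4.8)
The plan is to set up the intersection problem on the minimal regular model $\mathcal{X}/\mathcal{O}_v$ as a linear-algebra statement about the intersection matrix of the fibral divisors, and then invoke the standard negative-semidefiniteness of that matrix together with the single linear constraint coming from the connected component $\E^0$.

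First I would fix the place $v$ and write the special fibre of $\mathcal{X}$ as $\sum_i n_i \Gamma_i$, where the $\Gamma_i$ are the irreducible components and $n_i$ their multiplicities; the components meeting the identity section lie in $\E^0$, i.e. the $\Gamma_i$ with $n_i = 1$ that pass through $O$. A fibral divisor $G_Q$ is an element $\sum_i a_i \Gamma_i$ of the $\mathbb{Q}$-vector space $V$ spanned by the $\Gamma_i$. The condition that $\Gamma.\bigl([Q]-[O]+[G_Q]\bigr)=0$ for every fibral $\Gamma$ is equivalent to the system of linear equations
\begin{equation*}
  \Gamma_j . [G_Q] = -\,\Gamma_j.\bigl([Q]-[O]\bigr)\qquad\text{for all }j,
\end{equation*}
where the left-hand side is governed by the intersection matrix $M=(\Gamma_i.\Gamma_j)$. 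This matrix is negative semidefinite with one-dimensional kernel spanned by the whole fibre $\sum_i n_i \Gamma_i$ (equivalently, by $f^*v$), by the standard theory of the intersection form on the special fibre of a regular model; this is where I expect the main work to sit, since existence and uniqueness both hinge on understanding this kernel.

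For \emph{existence}, note that the right-hand side vector $b_j = -\Gamma_j.([Q]-[O])$ is orthogonal to the kernel of $M$: indeed, the degree of $[Q]-[O]$ on the whole fibre is zero because $Q$ and $O$ are both sections meeting the fibre in a single point of multiplicity one, so $\sum_j n_j\, b_j = -(\sum_j n_j\Gamma_j).([Q]-[O]) = 0$. Since $M$ is symmetric with kernel equal to its radical, the image of $M$ is exactly the orthogonal complement of this kernel, so the system is solvable; this yields a fibral $G_Q$, unique up to adding a multiple of the whole fibre. For \emph{uniqueness} I would then use the normalising condition that $G_Q$ intersects $\E^0$ trivially: adding a multiple of the fibre changes the intersection with a fixed component of $\E^0$ (e.g. the identity component $\Gamma_0$, through which $O$ passes) by a nonzero amount, since $\Gamma_0.\bigl(\sum_i n_i\Gamma_i\bigr)$ equals the degree of the fibre on $\Gamma_0$, which is nonzero. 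Hence exactly one representative satisfies the normalisation, pinning down $G_Q$ uniquely.

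Finally, to identify $F_Q$ with the restriction of $G_Q$ to $\E$, I would observe that $\E$ is obtained from $\mathcal{X}$ by deleting the non-reduced components, so the fibral divisor classes supported on $\E$ are precisely those of $G_Q$ after discarding the multiple components; the defining property of $F_Q$ in Proposition~\ref{log_bundle_prop}, namely that $\mathcal{L}_Q$ is cubic and intersects $\E^0$ trivially, matches exactly the two conditions imposed on $G_Q$ once one passes to $\E$. Thus restricting $G_Q$ to $\E$ recovers the $F_Q$ of that Proposition, completing the identification. The one subtlety to handle carefully is the bookkeeping between divisor classes on the regular model $\mathcal{X}$ and on the Néron model $\E$, i.e. checking that deleting the multiple components does not lose any information relevant to the pairing, which follows because those components never meet the sections $Q$ and $O$.
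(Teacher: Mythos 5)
Your linear-algebra setup is the standard alternative to the paper's argument (the paper instead uses that $nQ$ lands in $\E^0$, extends a meromorphic section of $\mathcal{L}_{nQ}=n\mathcal{L}_Q$ over $\mathcal{X}$, and quotes Silverman for uniqueness), and your existence step is correct: the right-hand side is orthogonal to the radical of the intersection form because $F.[Q]=F.[O]=1$ for sections. However, your uniqueness argument contains a genuine error. You claim that adding a multiple of the whole fibre $F=\sum_i n_i\Gamma_i$ changes the intersection number with $\Gamma_0$ by a nonzero amount because ``$\Gamma_0.\bigl(\sum_i n_i\Gamma_i\bigr)$ \dots is nonzero.'' This is false, and it contradicts the very structure you invoke two sentences earlier: $F$ spans the kernel (radical) of the intersection form, so $\Gamma.F=0$ for \emph{every} component $\Gamma$, including $\Gamma_0$ (indeed $F$ is the divisor of the uniformiser pulled back, hence numerically trivial). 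Consequently no intersection-number condition can normalise away the ambiguity. The condition ``$G_Q$ intersects $\E^0$ trivially'' must be read as a support condition --- the coefficient $a_0$ of $\Gamma_0$ in $G_Q$ vanishes --- and then uniqueness does follow, because adding $cF$ changes $a_0$ by $c\,n_0=c$ (the identity component has multiplicity $1$). Note that $G_Q.\Gamma_0=+1\neq 0$ in general, as the paper's equations for the coefficients show, so the intersection-number reading is not even consistent with the statement.

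A second, smaller gap is in the final identification $F_Q=G_Q|_{\E}$. You assert that the defining property of $F_Q$ (that $\mathcal{L}_Q=([Q]-[O])^{\rm rig}+[F_Q]$ satisfies the Theorem of the cube) ``matches exactly'' the numerical condition imposed on $G_Q$. That equivalence --- cubical structure on $\E$ versus numerical triviality on all fibral divisors of $\mathcal{X}$ --- is precisely the nontrivial content here, and it is where the paper does its real work: it writes $\mathcal{L}_Q=\bigl[\tfrac1n\diviseur(\overline{f^{\otimes n}})\bigr]$ using $[F_{nQ}]=0$, extends the section to $\mathcal{X}$, and deduces the intersection property from $\Gamma.([nQ]-[O])=0$. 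As written, your last paragraph assumes the conclusion rather than proving it; you would need to either reproduce an argument of this kind or cite the correspondence between cubical line bundles on the N\'eron model and fibrally numerically trivial divisors on the regular model.
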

\begin{proof}
Recall that $F_Q$ is a fibral divisor (with rational coefficients) such that
$$
\mathcal{L}_Q=[Q]-[O]+[F_Q]
$$
satisfies the Theorem of the cube. Moreover, such an $F_Q$ is unique up to addition of an integral multiple of the special fibre of $\E$ (which is a principal divisor). Therefore, there exists a unique such $F_Q$ whose support does not contains the connected component $\Gamma_0$. Let $n$ be the exponent of the group of components of $\E$. As $nQ$ falls into the connected component of $\E$, we have
$$
\mathcal{L}_{nQ}=[nQ]-[O]
$$
that is, $[F_{nQ}]=0$. On the other hand, we have $\mathcal{L}_{nQ}=n\mathcal{L}_Q$ because the map $Q\mapsto \mathcal{L}_Q$ is a morphism of groups. Let $f$ be a meromorphic section of the generic fibre of $\mathcal{L}$, then $f^{\otimes n}$ is a meromorphic section of the generic fibre of $\mathcal{L}_{nQ}$, and this meromorphic section can be extended into a meromorphic section $\overline{f^{\otimes n}}$ of $\mathcal{L}_{nQ}$. Moreover,
$$
\mathcal{L}_Q=\Bigl[\tfrac{1}{n}\diviseur(\overline{f^{\otimes n}})\Bigr]
$$
because the right hand side is cubic and has the same generic fibre than $\mathcal{L}_Q$ (this argument is given in~\cite[Chap. 3, 1.4]{mb}).

On the other hand, we can also consider $\mathcal{L}_{nQ}=[nQ]-[O]$ as a divisor class on $\mathcal{X}$, and choose a meromorphic section $g$ extending $\overline{f^{\otimes n}}$. Let us denote by $G_Q$ the unique fibral divisor on $\mathcal{X}$ whose support does not contains $\Gamma_0$, and such that
$$
\bigl[\tfrac{1}{n}\diviseur(g)\bigr]=[Q]-[O]+[G_Q]
$$
holds in $\logpic(\mathcal{X},\{v\})$. Then the restriction of $G_Q$ to $\E$ is equal to $F_Q$. Moreover, if $\Gamma$ is an irreducible component of the special fibre of $\mathcal{X}$, then we have
$$
\Gamma.([Q]-[O]+[G_Q])=\Gamma.\bigl[\tfrac{1}{n}\diviseur(g)\bigr]=\tfrac{1}{n}\cdot \Gamma.[\diviseur(g)]
$$
and, on the other hand,
$$
\Gamma.\diviseur(g)=\Gamma. \bigl([nQ]-[O]\bigr)=0
$$
because $nQ$ and $O$ both reduce in the component $\Gamma_0$ of $\mathcal{X}$. Therefore, our $G_Q$ satisfies the intersection property. On the other hand, the uniqueness of such a $G_Q$ is proved in Silverman~\cite[Proposition 8.3]{sil2}, hence the result.
\end{proof}

 The following list gives the unique divisor $G_Q$ for each type of reduction. We fix the point $Q\in \E(\mathcal{O}_v)$ of which we can assume that it does not belong to the connected component $\E^0(\mathcal{O}_v)$ as otherwise $G_Q=F_Q =0$. Let $\E^0=\Gamma_0$, $\Gamma_1$, \dots , $\Gamma_m$ be the irreducible components of the special fibre of $\mathcal{X}$. We will determine the coefficients $a_j$ of the divisor $G_Q = a_0 \Gamma_0 + \cdots + a_m \Gamma_m$. As remarked before, we must have $a_0=0$. By the above Lemma, the intersection with an irreducible component $\Gamma$ must yield
\begin{equation}\label{GQ_eqs}
  \Gamma . G_Q = \Gamma. ([O] - [Q]) = \begin{cases} +1 &\text{ if $\Gamma = \Gamma_0$,} \\
                                         -1 &\text{ if $Q\in\Gamma$, and }\\
                                         0 &\text{ otherwise.}
                                       \end{cases}
\end{equation}
These can be expressed in terms of linear equations for the coefficients $a_i$. For all properties of the intersection pairing on $\mathcal{X}$ we refer to chapter IV in~\cite{sil2}.

\subsubsection{Multiplicative case}\label{mult_case_subsubsec}
  Suppose $E$ has split multiplicative reduction over $\mathcal{O}_v$. Say, there are $n$ connected components to $\E(\mathcal{O}_v)$; hence the Kodaira type is I$_n$. Fix a Tate parametrisation $u\colon E(K_v) \to K_v^{\times}/q^{\ZZ}$ and label the components by $\{\Gamma_j\}_{0\leq j < n}$ where $v(u(R))\equiv j\pmod{n}$ if and only if $R$ is on $\Gamma_j$. (This is well-defined up to reversing the order as $u$ can be composed with $[-1]$.) The special fibre is then  the polygon $\Gamma_0 + \Gamma_1+\cdots + \Gamma_{n-1}$.
  \begin{lem}
    Suppose $Q$  belongs to the component $\Gamma_k$ with $0 < k < n$. Then 
    \begin{equation*}
      F_Q = G_Q = \sum_{i=1}^{k} \frac{i(n-k)}{n}\Gamma_i + \sum_{i=k+1}^{n-1} \frac{(n-i)k}{n}\Gamma_i.
    \end{equation*}
   \end{lem}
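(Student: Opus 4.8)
The plan is to reduce the statement to solving an explicit linear system coming from the intersection pairing on the special fibre, and then to exhibit the announced divisor as its unique solution. First I would recall the geometry of a fibre of Kodaira type I$_n$: the special fibre of the minimal regular model $\mathcal{X}$ is a N\'eron $n$-gon, that is, a cycle of $n$ projective lines $\Gamma_0,\dots,\Gamma_{n-1}$ with $\Gamma_i^2=-2$, with $\Gamma_i\cdot\Gamma_{i+1}=1$ for indices read modulo $n$, and with $\Gamma_i\cdot\Gamma_j=0$ otherwise. Since a fibre of type I$_n$ has no multiple components, we have $\mathcal{X}=\E$ over $\mathcal{O}_v$, so $F_Q=G_Q$ and it suffices to determine $G_Q$. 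By the Tate parametrisation labelling, the unit section reduces to $\Gamma_0$ and $Q$ reduces to $\Gamma_k$; hence the right-hand side of the defining equations \eqref{GQ_eqs} is $\Gamma_j\cdot G_Q=\delta_{j,0}-\delta_{j,k}$.

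Writing $G_Q=\sum_{i} a_i\Gamma_i$ with $a_0=0$, the intersection numbers above turn each equation into $\Gamma_j\cdot G_Q=a_{j-1}-2a_j+a_{j+1}$, with indices modulo $n$. Thus the coefficients $(a_j)$ must solve the discrete Poisson equation on the cycle $\ZZ/n\ZZ$ whose source is $+1$ at the node $0$ and $-1$ at the node $k$, and which vanishes elsewhere. The key structural observation is that on each of the two arcs joining $0$ to $k$, namely $0,1,\dots,k$ and $k,k+1,\dots,n-1,0$, the equation reads $a_{j-1}-2a_j+a_{j+1}=0$, so the coefficients are in arithmetic progression along each arc. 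With the boundary values $a_0=0$ and $a_k$ still free, this forces $a_i=\tfrac{i}{k}\,a_k$ on the first arc and $a_i=\tfrac{n-i}{n-k}\,a_k$ on the second.

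It then remains to pin down the single free parameter $a_k$. I would do this using the equation at the node $0$, which reads $a_1+a_{n-1}=1$; substituting the two arithmetic progressions gives $a_k\bigl(\tfrac{1}{k}+\tfrac{1}{n-k}\bigr)=1$, whence $a_k=\tfrac{k(n-k)}{n}$. Feeding this back into the two arcs yields exactly $a_i=\tfrac{i(n-k)}{n}$ for $1\le i\le k$ and $a_i=\tfrac{(n-i)k}{n}$ for $k+1\le i\le n-1$, which is the claimed formula. The equation at the node $k$ need not be checked separately: since the sum of the intersection numbers $\Gamma_j\cdot G_Q$ over all $j$ vanishes (the full fibre is principal), once all equations except the one at $k$ hold, the last one holds automatically. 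Uniqueness of $G_Q$ subject to $a_0=0$ is already guaranteed by the preceding Lemma, so this single solution is the answer.

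The computation is essentially bookkeeping, and I do not expect a serious obstacle. The only points requiring care are the correct intersection data for the $n$-gon and the matching of the Tate parametrisation labelling with the cyclic adjacency of the components, together with the observation that the full fibre spans the kernel of the intersection matrix. This last point is precisely why the normalisation $a_0=0$ is needed to make $G_Q$ unique, and why the equation at the node $k$ is redundant.
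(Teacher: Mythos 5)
Your proposal is correct and rests on exactly the same ingredients as the paper's proof: the $n$-gon intersection data $\Gamma_i^2=-2$, $\Gamma_i\cdot\Gamma_{i\pm1}=1$, the defining equations \eqref{GQ_eqs}, and the uniqueness of $G_Q$ subject to $a_0=0$. The only difference is cosmetic --- the paper verifies that the stated coefficients satisfy the equations (checking the representative node $j=k$), whereas you solve the linear system forward via the arithmetic-progression observation on the two arcs and the redundancy of the equation at node $k$; both amount to the same computation.
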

  \begin{proof}
    Once the formula above is known it is a straight forward verification that the equations~\eqref{GQ_eqs} hold. We use that $\Gamma_j$ intersects $\Gamma_i$ if $ j-i \equiv \pm 1\pmod{n}$ and that the multiplicity is $1$ in this case and that $\Gamma_j.\Gamma_j = -2$ for all $j$. For instance for $j=k$, we get only contributions from the terms with $i=k-1$, $k$ and $k+1$.
    \begin{align*}
      G_Q . \Gamma_k &= \frac{(k-1)(n-k)}{n}\, \Gamma_{k-1}.\Gamma_k + \frac{k(n-k)}{n}\,\Gamma_k.\Gamma_k + \frac{(n - k-1)k}{n}\, \Gamma_{k+1}.\Gamma_k \\
      &= \frac{(k-1)(n-k)}{n}  + (-2) \frac{k(n-k)}{n} + \frac{(n-k-1)k}{n} = -1.\qedhere
    \end{align*}
  \end{proof}
    The monodromy pairing between $Q$ and $R$ in $E(K_v)$ with values in $\QZ$ is given by the degree of $-R^* [F_Q]$ by Proposition~\ref{pairings_prop}. 
    If $Q$ belongs to $\Gamma_k$ and $R$ belongs to $\Gamma_j$, then 
    this is  equal to
    \begin{equation*} -R^*[F_Q] = \begin{cases} - \frac{j(n-k)}{n} &\text{ if $j\leq k$}\\
                                                  - \frac{(n-j)k}{n} &\text{ if $j>k$}
                                    \end{cases}
    \end{equation*}
    and in both cases this gives that the monodromy pairing between $Q$ and $R$ is $jk/n$ modulo $\ZZ$.
 In the non-split multiplicative case, the situation is identical to the split multiplicative case except that $\Phi(k)$ is either trivial (when $n$ is odd) or contains only $\{\Gamma_0, \Gamma_{n/2}\}$ if $n$ is even. 

\subsubsection{Additive case}\label{additive_case_subsubsec}

\begin{description}
\item[Type II and II$^*$ :]
 $\Phi$ is trivial, so there is nothing to do.

\item[Type III :]
 $\Phi$ has order $2$ and it is obtained from two line intersecting with multiplicity $2$. So $F_Q = G_Q = \tfrac{1}{2} \Gamma_1$ will do if $Q$ belongs to $\Gamma_1$.

\item[Type III$^*$ :]
 Again $\Phi$ has order $2$, but there are now 8 components in the special fibre of $\mathcal{X}$. The connected components of $\E$ are denoted by $\Gamma_0$ and $\Gamma_1$. Then $\Gamma_0$, $\Gamma_2$, $\Gamma_4$, $\Gamma_6$, $\Gamma_5$, $\Gamma_3$, $\Gamma_1$ form a chain and $\Gamma_7$ only intersects $\Gamma_6$. Then, if $Q\in\Gamma_1$, we have
 \begin{equation*}
   G_Q = \tfrac{3}{2} \Gamma_1 + \Gamma_2  + 2\Gamma_3 + 2\Gamma_4 + \tfrac{5}{2}\Gamma_5 + 3 \Gamma_6 +\tfrac{3}{2}\Gamma_7
 \end{equation*}
and hence $F_Q = \tfrac{3}{2} \Gamma_1 $.

\item[Type IV :]
 $\Phi$ has order $3$ and it is obtained from three transversely intersecting lines. We find
 \begin{equation*}
   F_Q = G_Q = \begin{cases} \tfrac{2}{3} \Gamma_1 + \tfrac{1}{3} \Gamma_2 & \text{ if $Q \in \Gamma_1$,}\\
                         \tfrac{1}{3} \Gamma_1 + \tfrac{2}{3} \Gamma_2 & \text{ if $Q \in \Gamma_2$.}
            \end{cases}
 \end{equation*}

\item[Type IV$^*$ :]
 Still $\Phi$ has order 3, but there are 7 components. Denote by $\Gamma_6$ the unique component of multiplicity $3$. Write $\Gamma_0$, $\Gamma_1$ and $\Gamma_2$ for the elements of $\Phi$. Finally for $i=3,4,5$, let $\Gamma_i$ be the double line connecting $\Gamma_{i-3}$ and $\Gamma_6$. Then
 \begin{equation*}
   G_Q = \begin{cases}
              \tfrac{4}{3}\Gamma_1 + \tfrac{2}{3} \Gamma_2 +\Gamma_3 + \tfrac{5}{3} \Gamma_4 + \tfrac{4}{3}\Gamma_5 + 2 \Gamma_6 & \text{if $Q\in\Gamma_1$,}\\
   \tfrac{2}{3} \Gamma_1 + \tfrac{4}{3}\Gamma_2 + \Gamma_3 + \tfrac{4}{3}\Gamma_4  + \tfrac{5}{3} \Gamma_5 + 2 \Gamma_6 & \text{if $Q\in\Gamma_2$, }
            \end{cases}
 \end{equation*}
 and hence $F_Q = \tfrac{4}{3}\Gamma_1 + \tfrac{2}{3} \Gamma_2 $ or $   \tfrac{2}{3} \Gamma_1 + \tfrac{4}{3}\Gamma_2$ depending on whether $Q$ is in $\Gamma_1$ or $\Gamma_2$. Note that the contribution for IV and IV$^*$ differ by an integer.

\item[Type I$_n^*$ :]
 Label the four components of $\Phi$ by $\Gamma_0$, $\Gamma_1$, $\Gamma_2$, $\Gamma_3$. Write $\Gamma_4$ for the double line connecting $\Gamma_0$ and $\Gamma_1$. Then $\Gamma_5$, $\Gamma_6$, \dots , $\Gamma_{4+n}$ forms the chain of double line that links up to $\Gamma_2$ and $\Gamma_3$. We find
 \begin{equation*}
   G_Q = \begin{cases}
               \Gamma_1 +\tfrac{1}{2} \Gamma_2 +\tfrac{1}{2} \Gamma_3 + \Gamma_4 +\Gamma_5 +\cdots +\Gamma_{4+n} &
  \text{ if $Q \in \Gamma_1$,}\\
              \tfrac{1}{2}\Gamma_1 +\tfrac{n+4}{4} \Gamma_2 +\tfrac{n+2}{4}\Gamma_3 +\Gamma_4 +\tfrac{3}{2}\Gamma_5 +2 \Gamma_6 + \cdots + \tfrac{n+2}{2}\Gamma_{4+n} & \text{ if $Q\in\Gamma_2$,}\\
            \tfrac{1}{2}\Gamma_1 +\tfrac{n+2}{4} \Gamma_2 +\tfrac{n+4}{4}\Gamma_3 +\Gamma_4 +\tfrac{3}{2}\Gamma_5 +2 \Gamma_6 + \cdots + \tfrac{n+2}{2}\Gamma_{4+n} & \text{ if $Q\in\Gamma_3$,}\\
            \end{cases}
 \end{equation*}
 which gives that
  \begin{equation*}
   F_Q = \begin{cases}
               \Gamma_1 +\tfrac{1}{2} \Gamma_2 +\tfrac{1}{2} \Gamma_3  &
  \text{ if $Q \in \Gamma_1$,}\\
              \tfrac{1}{2}\Gamma_1 +\tfrac{n+4}{4} \Gamma_2 +\tfrac{n+2}{4}\Gamma_3  & \text{ if $Q\in\Gamma_2$,}\\
            \tfrac{1}{2}\Gamma_1 +\tfrac{n+2}{4} \Gamma_2 +\tfrac{n+4}{4}\Gamma_3  & \text{ if $Q\in\Gamma_3$.}\\
            \end{cases}
 \end{equation*}
\end{description}

\subsection{Example}

Let $E$ be the elliptic curve 158c1, which has a rational point $P=(13,-15)$ of order $5$. Let $K=\mathbb{Q}(\sqrt{-79})$, which has class group of order $h_K = 5$ generated by the ideal $\mathfrak p = (2, (1+\sqrt{-79})/2)$ above $2$. The curve $E$ has split multiplicative reduction of type I${}_{20}$ above $\mathfrak p$ and its conjugate $\bar{\mathfrak{p}}$, while the reduction over the unique place $(\sqrt{-79})$ above $79$ is non-split multiplicative of type I${}_{2}$. The set of bad places is $Z = \{\mathfrak{p},\bar{\mathfrak{p}},(\sqrt{-79})\}$ and note that the exact sequence~\eqref{pic_vs_log_eq} for $\logpic(\Spec(\OK),Z)$ does not split.
The point
$$
Q=\Bigl(\tfrac{101}{9}, - \tfrac{55}{9} + \tfrac{16}{27}\sqrt{-79} \Bigr)
$$
belongs to $E(K)$ and is of infinite order. Both points $P$ and $Q$ have good reduction at $(\sqrt{-79})$, but bad reduction over $\mathfrak{p}$ and $\bar{\mathfrak{p}}$.

While $P$ has order $5$ in the group of components $\Phi_\mathfrak{p}$, the point $Q$ has order $4$ and hence they are orthogonal under the monodromy pairing. The value of the logarithmic class group pairing must lie in the class group and we find indeed that
$$
\langle P, Q\rangle^{\rm log} = [\mathfrak{p}]^2 \in \Cl(\OK).
$$
Note however that the denominator ideals of $P$, $Q$, $O$ and 
$$P+Q = \bigl(-14 + 3\,\sqrt{-79}, -186+3\,\sqrt{-79}\bigr)$$
are all principal, so the above is the class of the integral ideal $-P^*[F_Q] = -\mathfrak{p} - 3 \bar{\mathfrak{p}}$.
Furthermore, we find
$$
\langle P,P \rangle^{\rm log} = \tfrac{4}{5} \mathfrak{p} + \tfrac{4}{5} \bar{\mathfrak{p}}\qquad\text{ and }\qquad
\langle Q,Q \rangle^{\rm log} = \tfrac{1}{4} \mathfrak{p} + \tfrac{1}{4} \bar{\mathfrak{p}}.
$$

\section{Tate-Shafarevich groups and Class groups}\label{link_sec}

\subsection{The basic connection}
Let $S\subseteq X$ be a finite set of finite primes.  We have the Kummer exact sequence for the log flat topology, see~\cite{gil5},
$$
\xymatrix@1{
0 \ar[r] & \OK^{\times}/p \ar[r] & H^1_{\text{\rm kpl}}(X(\log S),\mu_p)  \ar[r] & \logpic(X,S)[p]  \ar[r] & 0}
$$
and a canonical isomorphism
$$
H^1_{\text{\rm kpl}}(X(\log S),\mu_p)\simeq H^1(U,\mu_p)
$$
We can now use these maps to formulate the central result linking the logarithmic class group pairing to the $p$-descent. This gives a direct connection between Selmer groups (or Tate-Shafarevich groups) and class groups.

\begin{thm}\label{pairing_and_selmer_thm}
  Assume $E/K$ satisfies the hypotheses in the introduction. Let $\psi$ be the morphism defined by composing the maps
  \begin{equation*}
    \xymatrix@1{ \psi\colon\quad E'(K)/\varphi(E(K)) \ar[r]^(0.62){\kappa} & \Selphi \ar[r] & H^1(U_1,\mu_p) \ar[r]^(0.4){\rho} & \logpic(X,S_1)[p]. }
  \end{equation*}
  Then for all $Q$ in $E'(K)$, we have
  \begin{equation*}
    \psi(Q) = \langle P,Q\rangle^{\rm log}
  \end{equation*}
  where $\langle \cdot ,\cdot \rangle^{\rm log}$ is the logarithmic class group pairing.
\end{thm}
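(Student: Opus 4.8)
The plan is to compute both sides explicitly and to recognise them as pullbacks of one and the same logarithmic line bundle, namely the cubic bundle $\mathcal{L}_P$ attached to the $p$-torsion point $P$ in Proposition~\ref{log_bundle_prop}.

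First I would make $\psi(Q)$ concrete. Since $E[\varphi]=\mu_p$ and $E'[\hat\varphi]$ is generated by the rational point $P$, the Weil pairing gives a canonical identification $E[\varphi]\cong\homr(E'[\hat\varphi],\mu_p)\cong\mu_p$, and the connecting map $\kappa$ of the Kummer sequence $0\to E[\varphi]\to E\xrightarrow{\varphi}E'\to 0$ becomes, after this identification and Kummer theory $H^1(K,\mu_p)\cong K^\times/p$, the classical isogeny-descent map $\kappa(Q)=f(Q)\in K^\times/p$, where $f\in K(E')$ is the function with $\diviseur(f)=p(P)-p(O)$ (such $f$ exists because $pP=O$). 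This is the standard computation underlying~\cite{ms}. By Theorem~\ref{pdescent_thm} the image lies in $H^1(U_1,\mu_p)$, so $v(f(Q))\equiv 0\pmod p$ for every $v\notin S_1$, and the log Kummer map $\rho$ then sends it to the class of $\tfrac1p\diviseur_X(f(Q))$. Hence
\begin{equation*}
\psi(Q)=\Bigl[\tfrac1p\,\diviseur_X\bigl(f(Q)\bigr)\Bigr]\in\logpic(X,S_1)[p].
\end{equation*}

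Second, I would identify the cubic bundle $\mathcal{L}_P$ with $f$. Viewing $f$ as a rational function on the minimal regular model $\mathcal{X}'$ of $E'$, its divisor is $p\,\overline{(P)}-p\,\overline{(O)}+V$ with $V$ fibral, and dividing by $p$ produces a cubic, $p$-torsion class with generic fibre $(P)-(O)$ whose fibral part has trivial intersection with every component and is trivial on $\E'^0$. This is precisely the characterisation of $\mathcal{L}_P$ (equivalently of $G_P$) in Proposition~\ref{log_bundle_prop} and in the lemma preceding the explicit tables, taken with $n=p$ and the meromorphic section $g=f$. Therefore $\mathcal{L}_P=[\tfrac1p\,\diviseur(f)]$ and, using $\langle P,Q\rangle^{\log}=(-Q)^*\mathcal{L}_P$ together with the fact that the pullback of a principal divisor along a section is the divisor of the pulled-back function,
\begin{equation*}
\langle P,Q\rangle^{\log}=(-Q)^*\bigl[\tfrac1p\,\diviseur(f)\bigr]=\Bigl[\tfrac1p\,\diviseur_X\bigl(f(-Q)\bigr)\Bigr].
\end{equation*}

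Comparing the two expressions, the identity reduces to matching $f(Q)$ with $f(-Q)$ modulo $p$-th powers. This is not an accident: the inverse appearing in the Weil-pairing identification $T\mapsto e_\varphi(T,P)$ (equivalently the sign flagged in Proposition~\ref{log_bundle_prop}, that the generic fibre of $\mathcal{L}_Q$ is the inverse of the bundle attached to $Q$) is exactly what turns $(-Q)^*$ into the evaluation computing $\kappa$. I would fix all conventions once at the outset and track this single sign through; this part is routine. Alternatively one can avoid functions and argue with torsors: restricting the logarithmic Weil biextension in the first variable to $P\in E'[\hat\varphi]$ yields the $\mu_p$-torsor $\varphi^{-1}(\,\cdot\,)$ pushed out along $e_\varphi(\,\cdot\,,P)$, and pulling back in the second variable along $Q$ gives $\varphi^{-1}(Q)$, whose class under Kummer theory is $\psi(Q)$; this makes the sign automatic and recovers Agboola's classical identity $\psi^{\mathrm{cl}}(Q)=\langle P,Q\rangle^{\mathrm{cl}}_S$ as the $\pic$-part.

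The main obstacle is the place-by-place analysis at the bad fibres, which simultaneously checks that both sides land in $\logpic(X,S_1)[p]$ and that the fibral contributions agree. Outside $S_1$ the point $P$ reduces into the connected component $\E'^0$ (established in Section~\ref{local_sec}: $P$ meets the singular point only in the forward split case, while the hypotheses~\ref{comp3_asp} and~\ref{atp_hyp} rule out $p$-torsion in the component groups at the remaining places), so by the defining equations~\eqref{GQ_eqs} the divisor $F_P$ has no support there and $\tfrac1p\diviseur_X(f(Q))$ is integral there; hence the support of $\langle P,Q\rangle^{\log}$, and of $\psi(Q)$, is confined to $S_1$, matching the claimed target group. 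At the places of $S_1$ themselves (split multiplicative, where $P$ has order $p$ in $\Phi'_v$) I would use the explicit formula of Section~\ref{mult_case_subsubsec} for $F_P$ to verify that its contribution equals the fractional part of $\tfrac1p\,v(f(Q))$ dictated by the local descent of Proposition~\ref{kummerimage_prop}. This local matching at the split multiplicative places is the real content, and it is the step where the monodromy pairing, via Proposition~\ref{pairings_prop}, enters.
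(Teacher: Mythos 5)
Your proposal is correct in outline but takes a genuinely different route from the paper. The paper's proof is essentially two lines: it invokes \cite[Lemme 3.2]{gil1} to express the class-invariant homomorphism as $\psi(Q)=(Q\times P)^*(t(W^{\rm log}))$, the pullback of the logarithmic Weil biextension, and then observes that this is literally the definition of $\langle P,Q\rangle^{\rm log}$ — that is, the paper's argument is the torsor-theoretic one you offer only as an ``alternative'' at the end of your third paragraph. Your primary route instead computes both sides explicitly: $\kappa(Q)=f(Q)$ with $\diviseur(f)=p(P)-p(O)$ on one side, and $\mathcal{L}_P=[\tfrac1p\diviseur(\overline{f})]$ via the lemma characterising $G_P$ (applied with $n=p$, which is legitimate since $pP=O$ lies in $\E'^0$) on the other. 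This buys a self-contained, computation-friendly proof that meshes with section~\ref{explicit_pairing_subsec} and does not outsource the key identity to the earlier paper; the cost is the local bookkeeping you describe and, above all, the sign. Do note that the sign is not cosmetic: with the standard normalisation $f\circ\varphi=g^p$ the map $Q\mapsto f(Q)$ is a homomorphism, so $f(-Q)\equiv f(Q)^{-1}$ in $K^{\times}/p$, and the two sides of your comparison are a priori \emph{inverse} to one another — and since $p$ is odd, a nontrivial $p$-torsion class in $\logpic(X,S_1)$ is not equal to its inverse. The resolution is the one you point to (the identification $E[\varphi]\cong\mu_p$ depends on choosing $P$ or $-P$ as generator of $\ker\hat\varphi$, and the choice dictated by Cartier duality and the Poincar\'e bundle absorbs the inverse), but for the argument to close this must actually be carried through rather than merely flagged; alternatively, falling back on your biextension variant makes it automatic and reproduces the paper's proof.
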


\begin{proof}
By the same arguments as in~\cite[Lemme 3.2]{gil1}, the morphism $\psi$ can be expressed in terms of the restriction of the canonical biextension $W^{\rm log}$ on $\E'\times_X \E'$. Namely, for all $Q\in \E'(X)$, we have
$$
\psi(Q)=(Q\times P)^*(t(W^{\rm log}))
$$
with notations of section~\ref{logpairing_subsec}. This is exactly the definition of the logarithmic class group pairing.
\end{proof}

\begin{rmq}
We say that the map $\psi$ defined above is the (logarithmic) class-invariant homomorphism associated to the subgroup $\E[\varphi]\subseteq \E$. This homomorphism has been introduced in a more general setting. It computes the (logarithmic) Galois module structure of $\E[\varphi]$-torsors obtained using the coboundary map $\kappa$ attached to the isogeny $\varphi$. For further discussion about $\psi$, see~\cite{gil6}.
\end{rmq}

Let us split up the map $\psi$ into
\begin{equation*}
 \xymatrix@1@C+3mm{ \psi\colon\quad E'(K)/\varphi(E(K)) \ar[r]^(0.6){\kappa} & \Selphi \ar[r]^(0.4){\psi_{\Sel}} & \logpic(X,S_1)[p] }
\end{equation*}

\begin{prop}
\label{sha_clgp_prop}
Assume $E/K$ satisfies the hypotheses in the introduction. 
 Then we have an exact sequence
$$\xymatrix@1{
0\ar[r] & \ker(\psi) \ar[r] & \ker(\psi_{\Sel}) \ar[r] & \Sha(E/K)[\varphi]  \ar[r] & \coker(\psi) \ar[r] & \coker(\psi_{\Sel}) \ar[r] & 0.}
$$
If we assume moreover that $S_2=\emptyset$, we get the exact sequence
$$\xymatrix@1{
0\ar[r] & \ker(\psi) \ar[r] &\OK^{\times}/p \ar[r] & \Sha(E/K)[\varphi]  \ar[r] & \coker(\psi) \ar[r] &  0.}
$$
\end{prop}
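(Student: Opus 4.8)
The plan is to recognise both displayed sequences as instances of the elementary kernel-cokernel exact sequence, applied to the factorisation $\psi = \psi_{\Sel}\circ\kappa$ introduced just above the statement.

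Recall the general fact: for any two homomorphisms of abelian groups $A\xrightarrow{u} B\xrightarrow{v} C$ there is a natural exact sequence
$$\xymatrix@1{0\ar[r] & \ker u \ar[r] & \ker(vu)\ar[r] & \ker v \ar[r] & \coker u \ar[r] & \coker(vu)\ar[r] & \coker v\ar[r] & 0,}$$
where the maps are induced by $u$ and $v$. I would apply this with $u=\kappa$ and $v=\psi_{\Sel}$, so that $vu=\psi$. The only external inputs needed are the kernel and cokernel of $\kappa$: the Kummer map $\kappa$ is injective and fits, by the very definition of the Tate-Shafarevich group, into the short exact descent sequence $0\to E'(K)/\varphi(E(K))\xrightarrow{\kappa}\Selphi\to\Sha(E/K)[\varphi]\to 0$, which is the exact horizontal line in the diagram accompanying Theorem~\ref{pairing_and_selmer_thm}. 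Hence $\ker\kappa=0$ and $\coker\kappa=\Sha(E/K)[\varphi]$, and substituting these into the kernel-cokernel sequence gives precisely the first assertion.

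For the second statement I would specialise to $S_2=\emptyset$. By Corollary~\ref{global_duality_cor2} we then have $\Selphi = H^1(U_1,\mu_p)$, so $\psi_{\Sel}$ is literally the map $\rho$ from the Kummer exact sequence
$$\xymatrix@1{0\ar[r] & \OK^{\times}/p \ar[r] & H^1(U_1,\mu_p)\ar[r]^(0.55){\rho} & \logpic(X,S_1)[p]\ar[r] & 0}$$
recalled at the start of this section, rather than merely its restriction to a proper subgroup. Consequently $\psi_{\Sel}$ is surjective with kernel $\OK^{\times}/p$, i.e. $\ker\psi_{\Sel}=\OK^{\times}/p$ and $\coker\psi_{\Sel}=0$. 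Plugging these two values into the six-term sequence of the first part collapses it to the claimed five-term sequence.

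I do not expect a genuine obstacle here: the proposition is a formal consequence of the kernel-cokernel lemma together with facts already established. The one point deserving a word of care is the identification, valid exactly when $S_2=\emptyset$, of $\psi_{\Sel}$ with the \emph{full} Kummer surjection $\rho$; this rests on the equality $\Selphi=H^1(U_1,\mu_p)$ of Corollary~\ref{global_duality_cor2}, and it is what forces $\coker\psi_{\Sel}=0$ and $\ker\psi_{\Sel}=\OK^{\times}/p$.
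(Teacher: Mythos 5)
Your proposal is correct and follows essentially the same route as the paper: the six-term sequence is the kernel-cokernel sequence for $\psi=\psi_{\Sel}\circ\kappa$ with $\ker\kappa=0$ and $\coker\kappa=\Sha(E/K)[\varphi]$, and when $S_2=\emptyset$ the equality $\Selphi=H^1(U_1,\mu_p)$ (which the paper extracts directly from Theorem~\ref{pdescent_thm}, while you cite Corollary~\ref{global_duality_cor2}) identifies $\psi_{\Sel}$ with the surjection $\rho$ of the log-flat Kummer sequence, giving $\ker\psi_{\Sel}=\OK^{\times}/p$ and $\coker\psi_{\Sel}=0$. No gaps.
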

\begin{proof}
 The exact sequence is simply the kernel-cokernel sequence for the composition $\psi = \psi_{\Sel} \circ \kappa$. If $S_2 = \emptyset$, then $\Selphi = H^1(U_1,\mu_p)$ because the cokernel would lie in $\oplus_{v\in S_2} H^1(\mathcal{O}_v,\mu_p)$ by Theorem~\ref{pdescent_thm}. So kernel and cokernel of $\psi_{\Sel}$ identify with the ones of $\rho$.
\end{proof}

\begin{rmq}
In general, one expects that points of infinite order should not belong to the kernel of $\psi$. Let us describe more precise meanings of this statement.
\begin{enumerate}\renewcommand{\theenumi}{\arabic{enumi})}
\item \label{infpoints_item1} Starting from our point $P\in E'(K)[p]$ as usual, we believe there should exist an infinity of quadratic extensions $F/K$ together with a point $Q\in E'(F)$ of infinite order, such that $\langle P,Q\rangle^{\rm log}\neq 0$. When $K=\mathbb{Q}$ and $E'$ is semistable, we prove that in Theorem~\ref{Levinlike_thm} below (in fact, we prove a stronger statement).

\item \label{infpoints_item2} Given a point $Q$ of infinite order on $E'$, we believe there should exist a number field $F$ and a torsion point $R\in E(F)$ such that $\langle Q,R\rangle^{\rm log}\neq 0$.
\end{enumerate}
We note that~\ref{infpoints_item1} is strongly linked with the question asked by Agboola and Pappas in~\cite{ap00}. A variant of~\ref{infpoints_item2} has been explored in~\cite{at94}, namely that given $Q$ of infinite order and a fixed prime number $p$, there should exist a torsion point $R\in E(F)[p^n]$ such that $\langle Q,R\rangle^{\rm log}\neq 0$. The answer to this question is negative, as shown in~\cite{at94}.
\end{rmq}

\begin{lem}\label{reduction_lem}
If $S_1\neq\emptyset$, then $\kappa(P)\neq 0$ in the Selmer group.
\end{lem}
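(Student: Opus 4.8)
The plan is to prove the contrapositive locally: I will show that $\kappa(P)$ has a non-trivial image already in the completion at a single place of $S_1$. Fix any $v\in S_1$, and recall that the Kummer maps are compatible with localisation, so that $\mathrm{loc}_v\big(\kappa(P)\big)=\kappa_v(P)$ in $H^1(K_v,E[\varphi])\cong K_v^{\times}/p$. Hence it suffices to prove that $\kappa_v(P)\neq 0$, i.e.\ that $P\notin\varphi\big(E(K_v)\big)$ for this one place $v$; the global non-vanishing in $\Selphi\subseteq H^1(U_1,\mu_p)$ then follows immediately.

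The main computation is to make $\kappa_v(P)$ explicit using the forward split description from Section~\ref{local_sec}. There $E$ and $E'$ are Tate curves with parameters $q$ and $q'=q^p$, the isogeny $\varphi$ is induced by $x\mapsto x^p$ on $K_v^{\times}$, and $\hat\varphi$ by the identity; consequently $\E'[\hat\varphi]=\Phi'_v[\hat\varphi]$ is generated by the image of $q$, so that $P\equiv q^{a}$ for some $a$ coprime to $p$. Under the identification $E'(K_v)/\varphi(E(K_v))\cong K_v^{\times}/p$ recorded in Proposition~\ref{kummerimage_prop}, the class of $P$ is therefore $[q^a]$, and $\kappa_v(P)$ is its image in $H^1(K_v,\mu_p)=K_v^{\times}/p$. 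Equivalently, and this is the cleanest way to package it, $P$ generates $\ker\big(\hat\varphi\colon\Phi'_v\to\Phi_v\big)$ and one checks using the summary in Section~\ref{summary_subseq} that its image in $\Phi'_v/\varphi(\Phi_v)\cong\Zp$ is a generator; since $\varphi(E(K_v))$ reduces into $\varphi(\Phi_v)$ while $P$ does not, we get $P\notin\varphi(E(K_v))$, whence $\kappa_v(P)\neq 0$.

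The step I expect to be the real obstacle is precisely the last non-vanishing assertion, namely that $[q]$ is not a $p$-th power in $K_v^{\times}$, i.e.\ that $P$ meets a component of $\E'_v$ lying outside the image of $\varphi$ on component groups. This is transparent when the valuation already detects it, but it must be argued carefully in the remaining situation; the cleanest route is to trace through the forward split normalisation (together with $\hat\varphi^*(\omega)=\omega'$, $\varphi^*(\omega')=p\,\omega$) to pin down the exact position of $P$ relative to $\varphi(\Phi_v)$, rather than arguing merely on valuations. Once $\kappa_v(P)\neq 0$ is secured, the conclusion $\kappa(P)\neq 0$ in $\Selphi$ is formal, and in particular the composite $E'(K)[\hat\varphi]=\langle P\rangle\to\Selphi$ used in Corollary~\ref{global_duality_cor2} is injective because $\langle P\rangle$ has prime order.
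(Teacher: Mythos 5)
Your overall strategy --- localise at a single $v\in S_1$ and show $\kappa_v(P)\neq 0$ --- is legitimate and would prove something slightly stronger than the lemma, and your Tate-curve bookkeeping is correct up to the decisive point: $P\equiv q^a$ with $p\nmid a$, and $\kappa_v(P)$ is the class of $q^a$ in $K_v^{\times}/p$, so everything reduces to showing that $q$ is not a $p$-th power in $K_v^{\times}$. But the step you yourself flag as ``the real obstacle'' is a genuine gap, and it cannot be closed along the lines you sketch. Writing $\Phi_v\cong\cyclic{m_v}$ and $\Phi'_v\cong\cyclic{pm_v}$, the reduction of $P$ is the class of $am_v$ while $\varphi(\Phi_v)$ is the index-$p$ subgroup of classes divisible by $p$; hence $\bar P$ generates $\Phi'_v/\varphi(\Phi_v)$ \emph{if and only if} $p\nmid m_v$. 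The table in section~\ref{summary_subseq} records that $\Phi'[\hat\varphi]$ and $\Phi'/\varphi(\Phi)$ both have order $p$, but it does not say that the first surjects onto the second; when $p\mid m_v$ one has $\Phi'[\hat\varphi]\subset\varphi(\Phi_v)$, and the normalisation $\varphi^*(\omega')=p\omega$, $\hat\varphi^*(\omega)=\omega'$ carries no information about whether the unit part of $q$ is a $p$-th power. The condition $p\nmid m_v$ is not a consequence of the hypotheses: for the degree-$5$ isogeny from 11a1 to 11a3 over $\QQ$ (forward split at $11$ with $m_{11}=5$), the generator of the kernel of the dual isogeny is the image of $(5,5)$ under $\varphi$ and therefore visibly lies in $\varphi(E(\QQ))$, so $\kappa_v(P)=0$ at the unique place of $S_1$.

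For comparison, the paper argues globally: $P$ reduces outside the identity component at each $v\in S_1$, and non-degeneracy of the monodromy pairing is invoked to conclude $\langle P,P\rangle^{\rm mono}\neq 0$, hence $\langle P,P\rangle^{\rm log}=\psi(P)\neq 0$ and a fortiori $\kappa(P)\neq 0$. That route hinges on exactly the same arithmetic fact as yours, since the local self-pairing at $v$ equals $a^2m_v/p\bmod\ZZ$, which is non-zero precisely when $p\nmid m_v$ (non-degeneracy of a pairing does not by itself force the self-pairing of a non-zero element to be non-zero). So you have correctly isolated where the difficulty sits, but your write-up does not supply the missing ingredient, and neither argument covers the case where $p\mid m_v$ at every place of $S_1$. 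If you add the hypothesis that $p\nmid m_v$ for some $v\in S_1$, your local computation goes through and becomes an explicit reformulation of the paper's monodromy-pairing argument; without it, the key non-vanishing assertion remains unproved.
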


\begin{proof}
By construction, the point $P$ does not reduces in the connected component at places in $S_1$. Therefore, if $S_1\neq\emptyset$, the monodromy pairing being non degenerate, $\langle P,P\rangle^{\rm mono}\neq 0$. By Proposition~\ref{pairings_prop}, we deduce that $\langle P,P\rangle^{\rm log}\neq 0$. In other words, $\psi(P)\neq 0$ and, by definition, this implies that $\kappa(P)\neq 0$ in the Selmer group. This also implies that $E(K)[\varphi]=E(K)[p]$.
\end{proof}

\subsection{Splitting $\psi$ into pieces}
We denote by $\Cl(\langle S_1\rangle)$ the subgroup of $\Cl(K)$ generated by ideals with support in $S_1$.

It follows from the description of $\logpic(X,S_1)$ and the sequence~\eqref{pic_vs_log_eq} that we have an exact sequence
\begin{equation}\label{torsion_pic_vs_log_eq}
  \xymatrix@C-2ex{ 0 \ar[r] & \Cl(K)[p] \ar[r] & \logpic(X,S_1)[p] \ar[r] & \bigoplus_{v\in S_1} (\Zp) \cdot v  \ar `r[d] `[ll] `l[dlll]_{\theta} `d[dll] [dll] \\ 
    & \Cl(K)/p \ar[r] &  \Cl(\mathcal{O}_{K,S_1})/p & }
 \end{equation}
where the map $\theta$ is sending $\sum_{v\in S_1} \overline{n_v}\cdot v$ to the class of $\sum_{v\in S_1} n_v\cdot v$. We note that, if $\Cl(\langle S_1\rangle)[p]=0$  then $\theta=0$.

According to Proposition~\ref{pairings_prop}, when composing the map $\psi$ with the second map of~\eqref{torsion_pic_vs_log_eq}, we get the map
\begin{align*}
\psi^{\rm mono}:E'(K)/\varphi(E(K))\;\longrightarrow & \;\bigoplus_{v\in S_1}(\Zp)\cdot v \\
Q\;\longmapsto & \;\langle P,Q\rangle^{\rm mono}
\end{align*}
induced by the monodromy pairing.

We have an exact sequence
\begin{equation}\label{short_torsion_logpic_eq}
\xymatrix@1{
0 \ar[r] & \bigoplus_{v\in S_1}(\Zp)\cdot v \ar[r] & \logpic(X,S_1)[p]  \ar[r] & \Cl(\mathcal{O}_{K,S_1})[p] \ar[r] & 0.}
\end{equation}

When composing the map $\psi$ with the right hand side of this sequence, we get the map
\begin{align*}
\psi^{\rm cl}_{S_1}:E'(K)/\varphi(E(K))\;\longrightarrow & \;\Cl(\mathcal{O}_{K,S_1})[p] \\
Q\;\longmapsto & \;\langle P,Q\rangle^{\rm cl}_{S_1}
\end{align*}
where $\langle P,Q\rangle^{\rm cl}_{S_1}$ is the $S_1$-class group pairing defined at the end of section~\ref{classpairing_subsec}.

We note that, when composing the first map of~\eqref{torsion_pic_vs_log_eq} with the right hand side map of sequence~\eqref{short_torsion_logpic_eq}, we get the natural map $\Cl(K)[p]\rightarrow\Cl(\mathcal{O}_{K,S_1})[p]$.

\begin{rmq}\label{cokernel_comparison_rmq}
 When $\Cl(\langle S_1\rangle)[p]=0$, then $\Cl(K)[p]\simeq\Cl(\mathcal{O}_{K,S_1})[p]$, so the sequence~\eqref{torsion_pic_vs_log_eq} and~\eqref{short_torsion_logpic_eq} split, and we get an isomorphism
\begin{equation*}
\logpic(X,S)[p] \simeq \Cl(\mathcal{O}_{K,S_1})[p]\oplus \bigoplus_{v\in S_1}(\Zp)
 \cdot v .
\end{equation*}
Moreover, via this isomorphism, $\psi$ can be identified with the map $(\psi^{\rm cl}_{S_1},\psi^{\rm mono})$.
\end{rmq}

\section{Quadratic fields}\label{quadratic_sec}

\subsection{Points of infinite order and the class group pairing}
Let $S$ be a finite set of prime numbers. For any number field $K$, ee will denote by $\mathcal{O}_{K,S}$ the ring of $S'$-integers of $K$, where $S'$ is the set of finite places of $K$ lying above elements of $S$. We will denote by $\langle \cdot,\cdot\rangle^{\rm cl}_{S}$ the $S'$-class group pairing defined in section~\ref{classpairing_subsec}.

The pairing $\langle \cdot,\cdot\rangle^{\rm cl}_{S}$ has the advantage that, given two points $Q$ and $R$ on $E$, there always exists a set $S$ such that the pairing $\langle Q,R\rangle^{\rm cl}_{S}$ makes sense. For example, we may take $S$ to be the set of primes of bad reduction of $E$.

We now state a result inspired by the paper~\cite{gl}. For the convenience of the reader, we give a complete proof, following the method of~\cite{gl}. Unlike with our previous conventions, $p=2$ is allowed in this section.

\begin{thm}\label{Levinlike_thm}
 Let $E'$ be an elliptic curve defined over $\QQ$, and let $P\in E'(\mathbb{Q})$ be a point of prime order $p$.
 Let $S=\{\ell_1,\dots,\ell_r\}$ be a set of prime numbers such that $P$ reduces in the connected component of the N\'eron model of $E'$ at all primes outside $S$. Then there exists infinitely many quadratic imaginary fields $K$ with a point $Q\in E'(K)$ of infinite order, such that the class group pairing $\langle P,Q\rangle^{\rm cl}_{S}$ is non-zero in the group $\Cl(\mathcal{O}_{K,S})[p]$.
\end{thm}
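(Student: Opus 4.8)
The plan is to realise $\langle P,Q\rangle^{\rm cl}_S$ as the ideal class obtained by restricting the order-$p$ line bundle $[(P)-(O)]\in\pic^0(E')[p]$ to a quadratic point $Q$, and then to produce such points by specialising the degree-two map $x\colon E'\to\mathbb{P}^1$. First I would make the pairing explicit. Since $P$ reduces into the connected component of the N\'eron model of $E'$ at every place outside $S$, Proposition~\ref{log_bundle_prop}\ref{cl3_item} shows that the fibral contribution $(-P)^*[F_Q]$ vanishes over $\mathcal{O}_{K,S}$, so that Lemma~\ref{explicit_pairing_lem} yields
\[
\langle P,Q\rangle^{\rm cl}_S=\bigl[\,e(Q+P)-e(Q)-e(P)+e(O)\,\bigr]\in\Cl(\mathcal{O}_{K,S}),
\]
a class which is $p$-torsion because $p\,\langle P,Q\rangle^{\rm cl}_S=\langle pP,Q\rangle^{\rm cl}_S=0$. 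Equivalently, fixing $g\in K(E')^{\times}$ with $\diviseur(g)=p\,(P)-p\,(O)$ (which exists precisely because $P$ has order $p$), the class $\langle P,Q\rangle^{\rm cl}_S$ is the image of $g(Q)$ under the map $K^{\times}/p\to\Cl(\mathcal{O}_{K,S})[p]$ sending $x$ to $[\mathfrak{b}]$, where $(x)=\mathfrak{b}^{\,p}$ away from $S$. This concrete ideal class is the object whose non-triviality must be established. As a sanity check, for $p=2$ and $P=(e,0)$ one has simply $g=x-e$, and the construction specialises to the classical genus-theoretic production of $2$-torsion in class groups of quadratic fields.

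Next I would build the family of quadratic points. Writing $E'\colon y^2=f(x)$ with $f$ a cubic, for $t\in\QQ$ with $f(t)<0$ I set $K_t=\QQ(\sqrt{f(t)})$, an imaginary quadratic field, and $Q_t=(t,\sqrt{f(t)})\in E'(K_t)$. Two routine facts suffice here. The squarefree part of $f(t)$ takes infinitely many distinct negative values as $t$ ranges over the rationals with $f(t)<0$, so infinitely many distinct fields $K_t$ occur. Moreover the torsion of $E'$ over quadratic fields is uniformly bounded, so only finitely many $t$ can make $Q_t$ torsion; hence $Q_t$ has infinite order for all but finitely many $t$ in the family.

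The heart of the matter, and the step I expect to be the main obstacle, is to show that $\langle P,Q_t\rangle^{\rm cl}_S\neq0$ for infinitely many $t$. Following \cite{gl}, I would control the ideal $\mathfrak{b}_t$ with $(g(Q_t))=\mathfrak{b}_t^{\,p}$ through a local analysis at an auxiliary prime $\ell_0\notin S$ of good reduction for $E'$ at which $P$ reduces to a nonzero point. Because $g$ vanishes to order exactly $p$ along $(P)$, choosing $t$ in a suitable congruence class so that $Q_t$ meets $(P)$ transversally at a degree-one prime $\mathfrak{q}\mid\ell_0$ of $K_t$ forces $v_{\mathfrak{q}}(g(Q_t))=p$, so that $\mathfrak{q}$ enters $\mathfrak{b}_t$ to the first power. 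The delicate point is then to rule out that $\mathfrak{b}_t$ is principal modulo $p$-th powers. Here the imaginary quadratic hypothesis helps, since the unit group is finite and no regulator term intervenes; what remains is to arrange that, away from $S$ and the controlled prime $\mathfrak{q}$, the ideal $\mathfrak{b}_t$ is supported on primes of bounded norm, and that the class of $\mathfrak{q}$ in $\Cl(\mathcal{O}_{K_t,S})$ escapes the resulting bounded-rank subgroup for infinitely many $t$.

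The genuine difficulty is that these requirements pull in different directions: the congruence pinning $v_{\mathfrak{q}}(g(Q_t))=p$ is a condition on $t$ modulo a power of $\ell_0$, whereas the non-principality of $\mathfrak{q}$ is a global condition on the field $K_t$ (hence on the squarefree part of $f(t)$), and both must be imposed while keeping $f(t)<0$, keeping the squarefree part of $f(t)$ varying, and avoiding the finitely many torsion values. Coordinating these conditions by a sieving argument so that non-principality is forced infinitely often is exactly the technical content of the theorem; I would carry it out by adapting the method of \cite{gl} to the present setting, with $X=E'$, the morphism $x\colon E'\to\mathbb{P}^1$ of degree two, and the torsion line bundle $[(P)-(O)]$ of order $p$.
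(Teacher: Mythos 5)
Your reduction of $\langle P,Q\rangle^{\rm cl}_S$ to the class of the ideal $\mathfrak{b}$ with $(g(Q))=\mathfrak{b}^{\,p}$ away from $S$ (where $\diviseur(g)=p(P)-p(O)$) is correct, and so is the argument that $Q_t$ has infinite order for all but finitely many $t$. But the proof has a genuine gap at exactly the point you flag: you never establish that $\langle P,Q_t\rangle^{\rm cl}_S\neq0$ for infinitely many $t$. The local device you propose --- pinning $v_{\mathfrak q}(g(Q_t))=p$ at an auxiliary prime $\ell_0$ so that $\mathfrak q$ divides $\mathfrak b_t$ exactly once --- only controls the support of $\mathfrak b_t$; it gives no handle on whether $[\mathfrak b_t]$ is trivial in $\Cl(\mathcal{O}_{K_t,S})[p]$, since that is a global question about a class group that changes with $t$. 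The sieve you gesture at (arranging that the class of $\mathfrak q$ escapes a bounded-rank subgroup for infinitely many $t$) is precisely the assertion to be proved, and you do not carry it out. Deferring to ``adapting the method of \cite{gl}'' does not close this, because the method of \cite{gl} is not the congruence-plus-sieve argument you describe.

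What \cite{gl} (and the paper) actually do is avoid any direct analysis of the ideal $\mathfrak b_t$. One interprets $g(Q)\in K^{\times}/p$ as the class of the $\mu_p$-torsor $\varphi^{-1}(Q)$, where $\varphi\colon E\to E'$ is the isogeny dual to the quotient by $\langle P\rangle$. Hilbert's irreducibility theorem applied to the degree-$2p$ composite $E\to E'\to\mathbb{P}^1$ produces infinitely many $t$ for which the whole fibre is irreducible, so the torsor $\varphi^{-1}(Q_t)$ is non-trivial, i.e.\ $g(Q_t)$ is not a $p$-th power in $K_t^{\times}$. A second application of Hilbert irreducibility lets one additionally require that the field $K_t(\sqrt[p]{g(Q_t)})$ be linearly disjoint from $\mathbb{Q}[\sqrt[p]{u}\,\vert\,u\in\mathbb{Z}_S^{\times}]$, and that the primes of $S$ be totally ramified in $K_t$ so that $\mathcal{O}_{K_t,S}^{\times}/p=\mathbb{Z}_S^{\times}/p$. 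Together these force $g(Q_t)$ to lie outside the $S$-unit part of the Kummer sequence relating $\mathcal{O}_{K_t,S}^{\times}/p$, $H^1(U,\mu_p)$ and $\Cl(\mathcal{O}_{K_t,S})[p]$, which is exactly the non-vanishing of $\langle P,Q_t\rangle^{\rm cl}_S$. Without some such mechanism your argument does not prove the theorem; the explicit formula for the pairing and the construction of the family $Q_t$ are the easy parts, and the step you leave to a sieve is the entire content.
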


\begin{proof}
Let $E$ be the quotient of $E'$ by the cyclic subgroup generated by $P$, and let $\varphi\colon E\rightarrow E'$ be the dual isogeny, that we now consider as a $\mu_p$-torsor. Because $E$ is an elliptic curve, there exists a degree $2$ morphism $\theta\colon E\to \mathbb{P}^1$ from $E$ to the projective line. Applying Hilbert's irreducibility Theorem to $\theta\circ\varphi$, we get an infinity of points $Q$ in $E'$, defined over quadratic fields, such that $\varphi^{-1}(Q)$ is a non-trivial $\mu_p$-torsor.
More precisely, there exists an infinity of quadratic imaginary fields $K$ together with a point $Q\in E'(K)$ for which the torsor $\varphi^{-1}(Q)$ is non-trivial. According to the proof of~\cite[Corollary 3.1]{gl}, we may assume that the primes in $S$ are totally ramified in these quadratic imaginary fields.

Let $K$ be such a field, and let $Q\in E'(K)$ be such a point. Of course, $\varphi^{-1}(Q)$ is just $\kappa(Q)$, where $\kappa$ is the coboundary map
$\kappa\colon E'(K)\to H^1(K,\mu_p)$
associated to the isogeny $\varphi$.
Let $U=\Spec(\mathcal{O}_{K,S})$. Then $P$ falls into the connected component of the N\'eron model of $E'$ over $U$, from which it follows, by~\cite{gil1}, that the map $\kappa$ factorises as
$$
\xymatrix@1@C+3mm{
E'(K) \ar[r]^{\kappa_U} & H^1(U,\mu_p) \ar[r] & H^1(K,\mu_p).}
$$
Let $F$ be the field such that $\kappa(Q)=\Spec(F)$, and let $L=\mathbb{Q}[\sqrt[p]{u}\,\vert\,u\in \mathbb{Z}_S^\times]$. By a variant of Hilbert's irreducibility Theorem, it is possible to ensure that $F$ is linearly disjoint from $L$. Then $K$ is linearly disjoint from $\mathbb{Q}[\zeta_p]$, which is a subfield of $L$. Combining this with the fact that the primes in $S$ are totally ramified in $K$, we have $\mathcal{O}_{K,S}^{\times}/p=\mathbb{Z}_S^{\times}/p$. Therefore, the Kummer exact sequence over $U$ reads
$$
\xymatrix@1{
0 \ar[r] & \mathbb{Z}_S^{\times}/p \ar[r] & H^1(U,\mu_p) \ar[r]^{\rho_{S}} & \Cl(\mathcal{O}_{K,S})[p] \ar[r] & 0. }
$$
Moreover, because $F$ is linearly disjoint from $L$, the torsor $\kappa_U(Q)$ does not belong to the subgroup $\mathbb{Z}_S^{\times}/p$, that is, $\rho_{S}(\kappa_U(Q))\neq 0$. On the other hand, we have the relation
$$
\rho_{S}(z)=\rho_{S}(\kappa_U(Q))=\langle P,Q\rangle^{\rm cl}_{S}
$$
which is similar to that observed in Theorem~\ref{pairing_and_selmer_thm}.
This completes the proof of the first part of the statement.

We now have to prove that $Q$ is of infinite order. Suppose $Q$ is a torsion point, then, $\kappa$ being a morphism, we may suppose that the order of $Q$ is a power of $p$. Moreover, we may ask by the same arguments as before, that $K$ is linearly disjoint from the field $\QQ\bigl(E'[p^2]\bigr)$. This implies that the $p$-part of $E'(K)$ is equal to the $p$-part of $E'(\QQ)$, in other words $Q$ belongs to $E'(\QQ)$. This is not possible because in this case the pairing $\langle P,Q\rangle^{\rm cl}_{S}$ comes by base change from a class in $\Cl(\ZZ_{S})$, and therefore is trivial. So the point $Q$ is of infinite order.
\end{proof}

\begin{exm}\label{11a2_exm}
Let $E$ be the curve labeled 11a2 in Cremona's tables and let $E'$ be 11a1. Since the same curves will appear in further examples, we describe the situation in detail here. The curve $E'$ has a rational point $P=(5,5)$ of order $5$ giving our isogeny $\hat\varphi$. The torsion point $P$ does not reduces into the connected component at the only bad prime $11$. The isogeny is forward at the split multiplicative place $11$ and the reduction is good ordinary at $5$. Therefore our hypotheses will be satisfied for any number field $K$. The set $S_2$ will be the empty set for all $K$ and $S_1$ will consist of all places above $11$. 
By Lemma~\ref{reduction_lem}, there are no non-trivial $5$-torsion points in $E(K)$.
Since $S_2 = \emptyset$,  we get $\Selphi=H^1(\mathcal{O}_{K,\{11\}},\mu_5)$.

 According to Theorem~\ref{Levinlike_thm}, there exist infinitely many imaginary quadratic fields $K$ with a point $Q\in E'(K)$ of infinite order such that $\langle P,Q\rangle^{\rm cl}_{\{11\}}$ is non-zero in the group $\Cl(\mathcal{O}_{K,\{11\}})[5]$.  Using Lemma~\ref{reduction_lem}, we know that $P$ is non-trivial in $\Selphi$, however, since $P$ is defined over $\QQ$, we have $\langle P,P\rangle^{\rm cl}_{\{11\}}=0$. 
We deduce that the sequence
$$
\xymatrix@1{
0 \ar[r] & \cyclic{5}\, P \ar[r] & \Selphi  \ar[r] & \Cl(\mathcal{O}_{K,\{11\}})[5] \ar[r] & 0 }
$$
is exact. Hence we conclude that in this case $\dim_{\FF_5}\Sha(E/K)[\varphi]$ is less than $\dim_{\FF_5}\Cl(\mathcal{O}_{K,\{11\}})[5]$. 

As a concrete example, we may consider $K=\QQ(\sqrt{-47})$ whose $\{11\}$-class group has $5$ elements. The Mordell-Weil group has rank $2$, generated by
$$ Q_1 = \bigl( 4, -\tfrac{1}{2} +\tfrac{1}{2}\sqrt{-47}\bigr) 
\qquad\text{ and }\qquad
Q_2 = \bigl( -2, -\tfrac{1}{2} +\tfrac{1}{2}\sqrt{-47}\bigr) $$
While $\psi(Q_1) = 0$ and $\langle P,Q_1\rangle^{\rm cl}_{\{11\}} = 0$, both
$\psi(Q_2)$ and $\langle P,Q_2\rangle^{\rm cl}_{\{11\}}$ are the non-trivial class containing the ideal $(7,\tfrac{11}{2} + \tfrac{1}{2}\sqrt{-47})$.
\end{exm}

\subsection{Comparison to the quadratic twists}

Let us suppose that $E$ is defined over $\QQ$ and consider $E$ over a quadratic field $K$. As usual, we can relate all the properties of $E/K$ to $E/\QQ$ and to $E^\dagger/\QQ$ where $E^\dagger$ is the quadratic twist of $E$ by the fundamental discriminant of $K$. Now, the twist admits also an isogeny $\varphi^{\dagger}\colon E^{\dagger} \to E'^{\dagger}$. 

Since $p$ is odd, all the pieces in the $p$-descent split up into a part on which $\Gal(K/\QQ)$ acts trivially and a part on which its non-trivial element acts as $-1$. We denote $()^{-}$ the latter subspaces. Since the class group of $\QQ$ is trivial, the $p$-primary part of the class group of $K$ does 
not have any non-trivial subgroup on which $\Gal(K/\QQ)$ acts trivial.

For $i=1$ and $2$, let $T_i$ be the subset of $S_i$ of places in $\QQ$ which decompose in $K/\QQ$. 
Write $U_{K,1}$ for the complement of the places above $S_1$ in $\OK$. The class group of $U_{K,1}$ is denoted by $\Cl\bigl(\mathcal{O}_{K,S_1}\bigr)$.

From~\eqref{global_duality_eq}, we deduce
the exact sequence 
\begin{equation*}
  \xymatrix{
    0\ar[r] & \Sel^{\varphi^\dagger}(E^{\dagger}/\QQ) \ar[r] & H^1(U_{K,1},\mu_p)^{-}\ar[r] & \bigoplus_{v \in T_2} H^1(\ZZ_v, \mu_p) \ar `r[d] `[ll] `l[dlll] `d[dll] [dll] \\
                       & \Sel^{\hat{\varphi}^{\dagger}}(E'^\dagger/\QQ)^{\vee} \ar[r] & \Cl\bigl(\mathcal{O}_{K,S_1}\bigr)/p \ar[r] & 0.
 }
\end{equation*}
Moreover, the second term can be computed as
\begin{equation*}
  \xymatrix@1{ 0\ar[r] & \bigoplus_{v \in T_1} \Zp \, v \oplus F \ar[r] & H^1(U_{K,1},\mu_p)^{-}\ar[r] & \Cl\bigl(\mathcal{O}_{K,S_1}\bigr)[p] \ar[r] & 0 }
\end{equation*}
where $F=0$ if $K$ is imaginary and $F=\Zp\,\eta$ if $K$ is real and $\eta$ is a fundamental unit. 

Note that the kernel of $\varphi^{\dagger}$ is not isomorphic to $\mu_p$, rather it is of the form $\mu_p\otimes\chi$ over $K$ with $\chi$ being the quadratic character corresponding to $K$. Hence the above can be viewed as an extension of our techniques of the $p$-descent to isogenies that do not satisfy our hypothesis~\ref{genmup_asp}, but do satisfy it after a quadratic extension. One could generalise this to arbitrary isogenies if one is willing to pass to a larger extension and to consider particular eigen-spaces as it is done in~\cite{ms}.

\subsection{The kernel of the class-invariant homomorphism}
\begin{lem}
  Suppose $K$ is a imaginary quadratic field with $\mu_p\not\subset K$. Then $\psi_{\Sel}$ and $\psi$ are injective and
  \begin{equation*}
    \xymatrix@1{0\ar[r]& \Sha(E/K)[\varphi]\ar[r] & \coker(\psi)\ar[r] & \coker(\psi_{\Sel})\ar[r] & 0 }
  \end{equation*}
 is exact.
\end{lem}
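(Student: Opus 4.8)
The plan is to reduce the whole statement to the single assertion that $\psi_{\Sel}$ is injective; once that is in hand, everything else is formal and follows from the kernel-cokernel sequence already established in Proposition~\ref{sha_clgp_prop}. Recall that $\psi_{\Sel}$ is by construction the restriction to $\Selphi$ of the map $\rho\colon H^1(U_1,\mu_p)\to\logpic(X,S_1)[p]$, which sits in the Kummer exact sequence
\begin{equation*}
  \xymatrix@1{0\ar[r] & \OK^{\times}/p \ar[r] & H^1(U_1,\mu_p) \ar[r]^(0.62){\rho} & \logpic(X,S_1)[p] \ar[r] & 0.}
\end{equation*}
Hence $\ker(\rho)=\OK^{\times}/p$ and, since $\Selphi\subseteq H^1(U_1,\mu_p)$, we have $\ker(\psi_{\Sel})=\Selphi\cap(\OK^{\times}/p)$. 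Thus it suffices to prove that $\OK^{\times}/p$ is trivial.

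First I would compute $\OK^{\times}$. As $K$ is imaginary quadratic, Dirichlet's unit theorem shows that $\OK^{\times}$ is finite, equal to the group of roots of unity in $K$, hence cyclic of order $2$, $4$, or $6$, the last value occurring only for $K=\QQ(\sqrt{-3})$. Since $p$ is odd by hypothesis~\ref{odd_asp}, the only way $p$ could divide $\#\OK^{\times}$ would be $p=3$ together with $K=\QQ(\sqrt{-3})$; but in that case $\mu_3\subset K$, contradicting the hypothesis $\mu_p\not\subset K$. Therefore $\#\OK^{\times}$ is prime to $p$, raising to the $p$-th power is an automorphism of $\OK^{\times}$, and so $\OK^{\times}/p=0$.

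It follows that $\rho$ is injective, and a fortiori so is its restriction $\psi_{\Sel}$, giving $\ker(\psi_{\Sel})=0$. Feeding this into the six-term exact sequence of Proposition~\ref{sha_clgp_prop},
\begin{equation*}
  \xymatrix@1{0\ar[r] & \ker(\psi) \ar[r] & \ker(\psi_{\Sel}) \ar[r] & \Sha(E/K)[\varphi] \ar[r] & \coker(\psi) \ar[r] & \coker(\psi_{\Sel}) \ar[r] & 0,}
\end{equation*}
we obtain $\ker(\psi)=0$, so $\psi$ is injective, and the sequence collapses to the asserted
\begin{equation*}
  \xymatrix@1{0\ar[r] & \Sha(E/K)[\varphi] \ar[r] & \coker(\psi) \ar[r] & \coker(\psi_{\Sel}) \ar[r] & 0.}
\end{equation*}
The only substantive ingredient is the unit computation, which is elementary; the genuinely delicate point, and the one place the hypotheses must be used sharply, is the borderline case $p=3$, $K=\QQ(\sqrt{-3})$, which is exactly what $\mu_p\not\subset K$ rules out. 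Everything after that is diagram formalism, so I expect no real obstacle beyond bookkeeping.
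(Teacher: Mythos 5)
Your proposal is correct and follows essentially the same route as the paper: factor $\psi_{\Sel}$ through $\rho$, observe that $\ker(\rho)=\OK^{\times}/p$ vanishes for an imaginary quadratic $K$ once the borderline case $p=3$, $K=\QQ(\mu_3)$ is excluded by the hypothesis $\mu_p\not\subset K$, and then read everything off the kernel-cokernel sequence of Proposition~\ref{sha_clgp_prop}. The only difference is that you spell out the unit computation in slightly more detail than the paper does.
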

\begin{proof}
  The map $\psi_{\Sel}$ factors as 
  $$\xymatrix@1@C+3mm{
  \psi_{\Sel}\colon \Selphi \ar[r] & H^1(U_1,\mu_p)\ar[r]^(0.4){\rho} & \logpic(X,S_1)[p]
   }$$
  The first map is injective. The kernel of the second map $\rho$ is $\OK^{\times}/p$, which is trivial if $K$ is an imaginary quadratic field unless $p=3$ and $K=\QQ(\mu_3)$. The rest follows from Proposition~\ref{sha_clgp_prop}.
\end{proof}

If we are in the case the $S_2$ is empty, then we deduce further an isomorphism $\Sha(E/K)[\varphi] = \coker(\psi)$. Furthermore, the map $\psi_{\Sel}$ is an isomorphism from $\Selphi$ to $\logpic(X,S_1)[p]$. Hence one could proceed to do the $p$-descent in the  logarithmic class group avoiding completely the explicit Kummer map $\kappa$.

If instead $K$ is real quadratic, the kernel may be non-trivial, but it will have at most $p$ elements. In fact, it is the intersection of 
$\OK^{\times}/p\cong \Zp$ and $\Selphi$ inside $H^1(U_1,\mu_p)$. Two examples are given in section~\ref{real_ex_subsec} below.

\section{Numerical Examples}\label{numerical_sec}

All numerical examples in this paper were computed using Sage~\cite{sage}. Underlying to it, PARI/GP~\cite{parigp} was heavily used. Furthermore some calculations involved the use of Magma~\cite{magma}.
The algorithms, implemented to be used in Sage, for computing the Selmer groups and the logarithmic class group pairing are available on the authors web pages.

\subsection{An imaginary quadratic example}
Let us consider the example~\ref{11a2_exm} of the curves $E=$ 11a2 and $E'$ =11a1 and the point $P=(5,5)$ over an imaginary quadratic field $K$. There are now two cases. Either the order of the places above $11$ in the class group is divisible by $p=5$ or not, in other words either $\theta$ in sequence~\eqref{torsion_pic_vs_log_eq} is trivial or has an image of order $5$. Write $t\in\{0,1\}$ for the dimension of the image of $\theta$ and write $h_5$ for the dimension of $\Cl(\OK)[5]$ as $\FF_5$-vector spaces. In particular if $11$ is inert or ramified in $K$, then $t=0$.
We find that 
\begin{align*}
 \dim_{\FF_5} \Selphi &= \dim_{\FF_5} \logpic(X,S_1)[5] = h_5 + \# S_1 - t \text{ and }\\
 \dim_{\FF_5}\Selphihat &= h_5.
\end{align*}
If we assume that $h_5 \leq 1$, then, by Corollary~\ref{global_duality_cor2}, we get
$$
\dim_{\FF_5} \Sel^5(E/K) =2 h_5 + \#S_1 - t - 1.
$$

\subsection{Real quadratic examples}\label{real_ex_subsec}
In the case, when $K$ is a real quadratic field, the map $\psi$ may or may not be injective.
First we explain a case, when $\psi$ is {\emph not} injective. Let $K=\QQ(\sqrt{2})$ and take again the isogeny $\varphi$ from 11a2 to 11a1 of example~\ref{11a2_exm}.
The class group of $K$ is trivial. The Mordell-Weil group $E'(K)$ is generated by our $5$-torsion point $P$ and the point $Q = \bigl(\tfrac{9}{2}, -\tfrac{1}{2} +\tfrac{7}{4} \sqrt{2}\bigr)$ of infinite order. Since $S_1$ is not empty $\psi(P)\neq 0$ by Lemma~\ref{reduction_lem}. Since $Q$ has good reduction at $(11)$, we have $\psi(Q) = \langle P,Q\rangle^{\rm log} = 0$. Note that $Q$ is not in the image of $\varphi$ as $E(K) = \ZZ\, \hat\varphi(Q)$. It follows that the kernel of $\psi\colon  E'(K)/\varphi(E(K)) \to \logpic(X,S_1)[p]$ is generated by the image of $Q$. In other words, the image of $Q$ under
$E'(K)/\varphi(E(K)) \to \Selphi \to H^1(U_1,\mu_p)$ lands in the image of the global units $\OK^{\times}/p$. 

If we consider instead the isogeny $\varphi$ from 35a2 to 35a1 of degree $3$, still over $K=\QQ(\sqrt{2})$, we find that $\psi$ is injective. The rank of $E'(K)$ is again $1$, generated by a point $Q = \bigl(\tfrac{9}{2},-\tfrac{1}{2} +\tfrac{35}{4}\sqrt{2}\bigr)$. But this time
$$\psi(Q) = \frac{2}{3} \cdot (1+2\sqrt{2}) + \frac{1}{3} \cdot (1-2\sqrt{2}).$$ 

\subsection{A cubic example}

We consider our example~\ref{11a2_exm} of the curve 11a2 and 11a1 over the cubic field
$K=\QQ(\mu_7)^{+}$ obtained by adjoining the root $\alpha$ of $x^3+x^2 -2\,x-1=0$. 

The class group of $K$ is trivial and $E(K)$ does not contain any torsion points. The group $\Selphi = H^1(U_1,\mu_5)$ has the explicit basis $11$, $\alpha+1$, $\alpha^2-1$ as a $\FF_5$-vector space in $K^\times/p$. We have $\Selphihat = 0$. We deduce that the dimension of $\Sel^5(E/K)$ is $2$ and that the dimension of $\Sel^5(E'/K)$ is at most $3$ because it is a subspace of $\Selphi$.

As $K/\QQ$ is abelian and $E$ is defined over $\QQ$, we can apply Kato's Theorem~\cite{kato}. Firstly, the Tate-Shafarevich groups are finite and, secondly, the computation of the algebraic value of the $L$-series of $E$ over $K$ using modular symbols reveals that the rank of $E(K)$ is $0$. The Birch and Swinnerton-Dyer conjecture asserts that $\Sha(E/K)$ has $5^4$ elements and $\Sha(E'/K)$ has $5^2$. Kato's result and a $2$-descent also show that no prime other than $5$ can divide the order of these Tate-Shafarevich groups.

From our 5-descent information, we conclude that $\Sha(E/K)[5] = (\cyclic{5})^2$. Doing a further $5$-descent with the isogeny from $E'=$  11a1 to $E''=$ 11a3 (whose kernel contains a point of order $5$), we find that $\Sha(E'/K)[5] = (\cyclic{5})^2$, too. However, our descents will not reveal if $\Sha(E''/K)[5]$ is trivial (as BSD predicts) or not. Using the invariance of the BSD formula under isogenies, we see that $\Sha(E/K)$ will either have $5^4$ or $5^6$ elements.

 We conclude that
\begin{alignat*}{2}
  E(K) &= 0\qquad & \qquad E'(K) &= \cyclic{5} P\\
  \Sel^5(E/K) &= \bigl(\cyclic{5}\bigr)^2 \qquad & \qquad \Sel^5(E'/K) &= \bigl(\cyclic{5}\bigr)^3 \\
  \Sha(E/K) &= \bigl(\cyclic{25}\bigr)^2\text{ or }\bigl(\cyclic{125}\bigr)^2 \qquad & \qquad\Sha(E'/K)&= \bigl(\cyclic{5}\bigr)^2\text{ or }\bigl(\cyclic{25}\bigr)^2 .
\end{alignat*}
 
\subsection{A more complicated cubic example}
We take the curve $E=$ 294b1 given by the equation
$$ y^2 \ +\ x\,y \ =\ x^3\ -\ x\ -\ 1 $$
which admits an isogeny $\varphi$ of degree $7$ to the curve $E'=$ 294b2. The kernel of the dual is generated by a point $P=(6,3)$ on $E'$.
We consider this curve over the cubic extension generated by $\alpha$ which satisfies $\alpha^3 + \alpha^2 - 104\,\alpha + 371 = 0$. This is an abelian extension of conductor $313$. Its class number is $7$.

The set $S_1$ consists of the two forward split multiplicative places $(2)$ and $(3)$. The set $S_2$ instead contains the three places above $7$, namely $(7,\alpha)$, $(7,\alpha+3)$ and, $(7,\alpha-2)$. As an explicit basis for $H^1(U_1,\mu_7)$, we can take $\{2, 3, \alpha - 5, \alpha - 6, 8\alpha^2 + 53\alpha - 513\}$ where the last represents the generator of the seven power of a non-principal ideal.
The localisation map $H^1(U_1,\mu_7) \to \oplus_{v\in S_2} H^1(\mathcal{O}_v,\mu_7)$ is a linear map from a vector space of dimension $5$ to a space of dimension $3$. It can be checked that this map is not surjective, rather its cokernel has dimension $1$. We conclude that 
$$ \dim_{\FF_7} \Selphi = 3 \qquad \text{ and }\qquad \dim_{\FF_7} \Selphihat = 2. $$

With some luck, we are able to find a point of infinite order on $E'(K)$, namely $Q = (2\,\alpha^2 - 25\,\alpha + 69, 50\,\alpha^2 - 625\,\alpha + 1921)$ and a point $R= (\alpha^2 - 10\,\alpha + 29, 17\,\alpha^2 - 170\,\alpha + 415)$ on $E(K)$ of infinite order. It is possible to show using the N\'eron-Tate height pairing, that $P$, $\sigma(P)$, $\varphi(R)$, and $\sigma(\varphi(R))$ are linearly independent in $E'(K)$. From our descent computations, we conclude that $\Sha(E/K)[7] = \Sha(E'/K)[7] = 0$ and that $E'(K)/\varphi(E(K)) = \Selphi$ has dimension 3, while $E(K)/\hat\varphi(E'(K))=\Selphihat$ has dimension $2$. It is not difficult to see that the subgroup of points we found has index coprime to $7$ in  the full Mordell-Weil group.  In fact, the Birch and Swinnerton-Dyer conjecture asserts that the index is equal to the order of $\Sha(E'/K)$. 
We get that $P$, $Q$ and $\sigma(Q)$ generate $E'(K)/\varphi(E(K))$ while $R$ and $\sigma(R)$ generate $E(K)$ modulo the image of $\hat\varphi$.

We now analyse the class-invariant homomorphism 
$$\psi\colon E'(K)/\varphi(E(K)) \to \logpic(X,S_1)[7].$$
 The source is a $\FF_7$-vector space of dimension $3$ generated by $P$, $Q$ and $\sigma(Q)$. Since the only two places in $S_1 = \{(2), (3)\}$ are principal ideals, we get
$$
\xymatrix@1{ 0 \ar[r] & \Cl(K)[7]\ar[r] & \logpic(X,S_1)[7] \ar[r] & \cyclic{7}\,(2) \oplus \cyclic{7}\, (3) \ar[r] & 0. }
$$
Hence the target is also a three dimensional space, generated by $\tfrac{1}{7}(2)$, $\tfrac{1}{7}(3)$, and $\mathfrak{c} = (7,\alpha)$. It is easy to compute the images of the basis by computing the logarithmic class group pairing. We have
\begin{align*}
  \psi(P) &= \langle P, P \rangle^{\rm log} = \tfrac{1}{7} (3) + \tfrac{4}{7} (2), \\
  \psi(Q) &= \langle P , Q\rangle^{\rm log} = \mathfrak{c}^2,\\ 
  \psi(\sigma(Q)) &= \langle P,\sigma(Q) \rangle^{\rm log}  = \mathfrak{c}^4
\end{align*}
and hence $Q+3\,\sigma(Q)$ is in the kernel of $\psi$. In fact both kernel and cokernel of $\psi$ are of dimension $1$.

\subsection{A quintic example}
We return to our isogeny $\varphi\colon E \to  E'$ with $E=$ 11a2 and $E'= $ 11a1 from example~\ref{11a2_exm}. But now over the abelian quintic field  $K$ of conductor $61$. A simple element $\alpha$ for $K$ is given by a root of $x^5 + x^4 - 24\,x^3 - 17\,x^2 + 41\,x - 13 = 0$. Its class group is trivial. We deduce that the dimension of $\Selphi$ is $9$ and $\Selphihat$ is trivial. It follows from the fact that $E(K)$ is finite (again shown using modular symbols) that $\Sha(E/K)[5] = \Sha(E/K)[\varphi]$ is of dimension $8$. The Birch and Swinnerton-Dyer conjecture asserts that the order of $\Sha(E/K)$ is $5^8\cdot 19^2$. 

\section*{Acknowledgements}
The authors would like to thank Adebisi Agboola and Sylvia Guibert for interesting and helpful discussions.

\bibliographystyle{amsplain}
\bibliography{sha}

\end{document}